\newtheorem{theorem}{Theorem}
\newtheorem{claim}[theorem]{Claim}
\newtheorem{corollary}[theorem]{Corollary}
\newtheorem{definition}[theorem]{Definition}
\newtheorem{lemma}[theorem]{Lemma}
\newtheorem{proposition}[theorem]{Proposition}
\newcommand{\calP}{{\cal P}}
\newcommand{\ep}{\varepsilon}
\newcommand{\dN}{{{\bf N}}}
\newcommand{\dR}{{{\bf R}}}
\newcommand{\E}{{{\bf E}}}
\newcommand{\prob}{{{\bf P}}}
\newcommand{\cone}{{\rm{cone}}}
\newcommand{\calC}{{\cal C}}
\newcommand{\calO}{{\mathcal{O}}}
\newcommand{\calF}{\mathcal{F}}
\newcommand{\calE}{\mathcal{E}}
\newcommand{\calL}{\mathcal{L}}
\newcommand{\calS}{\mathcal{S}}
\newenvironment{proof}[1][Proof]{\textbf{#1.} }{\ \rule{0.5em}{0.5em}}
\newenvironment{proofclaim}[1][Proof]{\textbf{#1.} }{\ \rule{0.5em}{0.5em}}
\newcounter{figurecounter}
\begin{document}

\title{Optimal Dynamic Information Provision%
\thanks{The research of  Renault and Vieille
was supported by Agence Nationale de la Recherche (grant
ANR-10-BLAN 0112). Solan acknowledges the support of the Israel
Science Foundation, Grants \#212/09 and \#323/13.
The authors thank Omri Solan for numerics which have led to  the counterexample presented in Section \ref{sec_counter}.}}

\author{J\'er\^ome Renault\thanks{TSE (GREMAQ, Universit\' e Toulouse 1),
  21 all\' ee de Brienne, 31000 Toulouse, France. E-mail: \textsf{jerome.renault@tse-fr.eu}.},
Eilon Solan\thanks{School of Mathematical Sciences, Tel Aviv University, Tel
Aviv 69978, Israel. E-mail: \textsf{eilons@post.tau.ac.il}.},
and Nicolas Vieille\thanks{Departement Economics and Decision Sciences, HEC Paris, 1, rue de
la Lib\'{e}ration, 78 351 Jouy-en-Josas, France. E-mail: \textsf{vieille@hec.fr}.}}

\date{\today}

\maketitle

\begin{abstract}
We study a dynamic model of information provision.
A state of nature evolves according to a Markov chain. An informed advisor decides how much information to provide to an uninformed decision maker,
so as to influence his short-term decisions.
We deal with a stylized class of situations, in which the decision maker has a risky action and a safe action,
and the payoff to the advisor only depends on the action chosen by the decision maker.
The greedy disclosure policy is the policy which, at each round, minimizes the amount of information being disclosed in that round, under the constraint that it maximizes the current payoff of the advisor. We prove that the greedy policy is optimal in many cases -- but not always.
\end{abstract}

\noindent
\textbf{Keywords:}
Dynamic information provision, optimal strategy, greedy algorithm.

\section{Introduction}

Market conditions evolve over time, and information that is privately available to a market participant
is a valuable asset. In this paper we study
the optimal provision of information by an informed ``expert'' with no decision power, to an uninformed agent in  a dynamic setup. We develop a stylized model
in which an ``investor" chooses at each date whether or not to
choose a risky action, such as a short-run investment. The payoff
from investing depends on some underlying state of nature, which
is unknown to the investor. This state accounts for all relevant
external factors and evolves exogenously according to a Markov
chain.

At each date, the investor may get information through the
advisor. How much information is being disclosed is the choice
variable of the advisor.
To be specific, the advisor publicly chooses an
information provision rule, which maps each history into a
distribution over signals. The investor observes both the rule
chosen by the advisor and the realized signal. We assume that the
advisor receives a fixed fee whenever investment takes place, and
that the investor invests whenever the expected net payoff given
his current posterior belief is nonnegative.

This allows us to recast the problem faced by the advisor as a
Markov decision problem (MDP) in which the state space is the compact
set of posterior beliefs of the investor, and the action space is
the set of information provision rules. In that MDP, the advisor
chooses dynamically the provision of information so as to maximize
the (expected) discounted frequency of dates in which investment
takes place. Advising is thus both honest, in that realized
signals cannot be manipulated, and strategic, in that the
information content of the signal is strategic.

There are two (mutually exclusive) interpretations that befit this
description. In the first one, the advisor does not observe the
underlying state, and chooses how much information will be
publicly  obtained. In other words, he chooses a statistical
experiment \textit{\`a la} Blackwell, whose outcome is public. In the
second interpretation, the advisor does observe the successive states of nature but commits \textit{ex ante} to a dynamic information provision
policy.

The basic trade-off faced by the advisor is the following. By
disclosing information at a given date, the advisor may increase
his payoff at that date, but then gives up  part of his
information advantage for later dates, as soon as successive
states are autocorrelated. Our focus is on structural properties
of the model. Characterizing optimal information provision in
general is out-of-reach, and we instead focus on the optimality of
rules in which the above trade-off is solved in a very simple way.
We define the greedy policy as the one that, at any given date,
minimizes the amount of information being disclosed, subject to
the current payoff of the advisor being maximized. We prove that
this policy is optimal in the case of two states of nature. We
then exhibit a class of Markov chains, described by a renewal
property,  for which this policy is optimal for a large range of
initial distributions of the state (including most natural ones),
and is eventually optimal, for any  initial distribution of the
state. Our main message is thus  that this policy is likely to
perform very well in a large number of cases, but not always, as
we show by means of a counterexample.

Our modelling of information acquisition/disclosure is a dynamic
version of the persuasion mechanisms of Kamenica and Gentskow
(2011) who study optimal signals in a broader, yet static, setup.
It also parallels the independent paper by Ely (2014). Our paper
joins the growing literature on dynamic  models in which
uncertainty evolves, see, e.g., Mailath and Samuelson (2001),
Phelan (2006), Wiseman (2008), or Athey and Bagwell (2008), and
Escobar and Toikka (2013) for economic applications. These
references focus on game models, whose mathematical analysis is in
general quite challenging, see Renault (2006) and H\"orner et
al.~(2010). Although our basic model is a game-theoretic one, its
reduced form, and the commitment assumption makes it more
comparable to contract theory ones, see e.g. Battaglini (2005),
Zhang and Zenios (2008) or Zhang, Nagarajan and Sosic (2008).

%The paper is organized as follows.
%The model and the main results appear in Section \ref{sec:model}.

\section{Model and Main Results}
\label{sec:model}

\subsection{Model}

We consider the following stylized class of two-player games
between an ``advisor'' (Player~1) and an ``investor'' (Player~2).
The advisor observes a stochastic process $(\omega_n)_{n\in \dN}$
with values in a finite set of states $\Omega$, and may provide
the investor with information regarding the current or past values of the process. In each round, the investor chooses
whether to invest or not. The investor's payoff from investing in round $n$
is $r(\omega_n)$, where $r: \Omega\to \dR$. The advisor receives a
fee  whenever investment takes place (this fee is already
accounted for in $r$) and discounts future payoffs according to a
discount factor $\delta$.

While the investor knows the law of the sequence
$(\omega_n)_{n\in\dN}$, he receives no information on the realized
states, except through the advisor. It is then natural to assume
that he chooses to invest whenever his expected (net) payoff from
investing is nonnegative, where the expectation is computed using
the information released by the advisor.\footnote{From the
literature on dynamic games we know that more sophisticated
equilibria may possibly be designed. Besides being natural, our
assumption allows to cover the case of short-lived investors or
of a large number of investors.}$^{,}$\footnote{To simplify the analysis we will assume that the investor also invests on the investment frontier,
that is, when his expected profit is 0.
Indeed, otherwise, whenever the investor's belief is on the investment frontier,
the advisor would reveal a small amount of additional information, so as to push the investor's belief to the region
where the investor strictly prefers investing to not investing.}
Thus, the game reduces to a
stochastic optimization problem, in which the advisor chooses
whether and how to reveal information to the investor, so as to
maximize the expected discounted frequency of rounds in which
investment takes place.

We assume that the process $(\omega_n)_{n\in\dN}$ follows an irreducible Markov
chain with transition matrix $M=(\pi(\omega'\mid \omega))_{\omega,\omega' \in \Omega}$ and
invariant measure $m\in \Delta(\Omega)$. The set $\Delta(\Omega)$
is the set of probability distributions over $\Omega$, whose
elements are potential ``beliefs'' of the investor.
Throughout, we identify each $\omega\in \Omega$ with a unit basis vector in $\dR^\Omega$, and $\Delta(\Omega)$ with the $(|\Omega| -1)$-dimensional unit simplex in $\dR^\Omega$, endowed with the induced topology.

The game is played as follows.
In each round
$n$, the state $\omega_n$ is drawn according to $\pi(\cdot\mid
\omega_{n-1})$, the advisor observes $\omega_n$ and chooses which
message to send to the investor;
The investor next chooses whether
to invest, and the game moves to the next round.\footnote{We are
not explicit about the message set. It will be convenient to first
assume that it is rich enough, e.g., equal to $\Delta(\Omega)$. We
will show that w.l.o.g. two messages suffice.}
When the investor's
belief is $p\in \Delta(\Omega)$, his expected net payoff from
investing is given by the scalar product $\langle p,r\rangle = \sum_{\omega \in \Omega} p(\omega) r(\omega)$.
Accordingly, the \emph{investment region} is $I:=\{p\in
\Delta(\Omega), \langle p,r\rangle\geq 0\}$ and the \emph{investment
frontier} is $\calF:=\{p\in \Delta(\Omega), \langle p,r\rangle= 0\}$. We also denote by $J:=\Delta(\Omega)\setminus I$ the \textit{noninvestment} region.

Throughout, we will denote by $\Omega^+:=\{\omega\in \Omega, r(\omega)\geq 0\}$ and $\Omega^-:=\{\omega\in \Omega, r(\omega)<0\}$ the states with nonnegative and negative payoff respectively, so that $\Omega^+$ and $\Omega^-$ form a partition of $\Omega$.

An \emph{information disclosure policy} for the advisor specifies for each round,
the probability law of the message being sent in that round, as a
function of previous messages and the information privately
available to the advisor, that is, past and current states.

We will assume that the advisor has \emph{commitment power}. To be specific, we assume that in any given round, the investor knows which disclosure policy was used in that round, and therefore knows unambiguously how to interpret the message received from the advisor.

An equivalent and alternative interpretation
is to assume that the advisor does
\emph{not} observe the process $(\omega_n)_{n\in\dN}$ and chooses
in each round a statistical experiment \textit{\`a la} Blackwell. Such an
experiment yields a random outcome, whose distribution is
contingent on the current state. Under this alternative
interpretation,
the advisor has no private
information, but by choosing the experiment, he effectively determines how much information is being publicly
obtained,
and
the investor observes both the experiment choice  and the outcome of the experiment.

\subsection{A Reformulation}

Given an information disclosure policy, the investor uses the
successive messages received from the advisor  to update his
belief on the current state. We find it convenient to distinguish
the beliefs $p_n$  and $q_n$ held in round $n$, respectively
\emph{before} and \emph{after} receiving the message of the advisor. Formally, $p_n$
is the conditional law of $\omega_n$ given the messages received
prior to round $n$, while $q_n$ is the updated belief, once the
round $n$ message has been received,  so that the investor invests in round $n$ if and only if
$q_n\in I$.

The beliefs $q_n$ and $p_{n+1}$ differ because
the state evolves:
 $\omega_n$ and
$\omega_{n+1}$ need not be equal, and one has $p_{n+1}=
\phi(q_n):=q_nM$.
The difference between
$p_n$ and $q_n$ is the result of the information provided by the advisor.

For a given
$p\in \Delta(\Omega)$, denote by $\calS(p)\subset
\Delta(\Delta(\Omega))$ the set of probability distributions over
$\Delta(\Omega)$ with mean $p$.
We denote by $\mu_p \in \calS(p)$ the distribution over $\Delta(\Omega)$ that assigns probability 1 to $p$.

As a consequence of Bayesian
updating, the (conditional) law $\mu$ of $q_n$  belongs to
$\calS(p_n)$, for every information disclosure policy. Conversely,
a classical result from the literature of repeated games
with incomplete information (see Aumann and Maschler (1995)\footnote{Aumann and Maschler (1995) contains a
proof when the distribution $\mu$ has a finite support. Their
proof readily extends to the case in which the support of $\mu$ is general.})
states that the converse also holds.
That is, given any distribution $p\in \Delta(\Omega)$ and any
distribution $\mu\in \calS(p)$ of beliefs with mean $p$, the
advisor can correlate the message with the state in such a way
that the investor's updated belief is distributed according
to $\mu$. Elements of $\calS(p)$ will be called \emph{splittings at $p$}, as is common in the literature.%
\footnote{Or simply \emph{splitting}, if $p$ is clear from the context.}

These observations allow us to reformulate the decision problem faced
by the advisor as a dynamic optimization problem
$\Gamma$. The state space in $\Gamma$ is the set $\Delta(\Omega)$ of investor's beliefs
and the initial state is $p_1$, the law of $\omega_1$. At each state
$p\in \Delta(\Omega)$, the set of available actions is the set
$\calS(p)$, so that the advisor chooses a distribution $\mu$ of
posterior beliefs that is consistent with $p$.  Given the
posterior belief $q$, the current payoff is 1 if $q\in I$ and
0 if $q\notin I$, and the next state in $\Gamma$ is $\phi(q)$.
Thus, the (expected) stage payoff given $\mu$ is $\mu(q\in I)$.

We denote by $V_\delta(p_1)$ the value of $\Gamma$ as a function
of the initial distribution $p_1$.
The value function $V_\delta$ is characterized as
the unique solution of the dynamic programming equation\footnote{We write
$\max$ on the right-hand side because it is readily checked that
$V_\delta$ is Lipschitz over $\Delta(\Omega)$, the expression
between braces is upper hemi-continuous w.r.t.~$\mu$ in the
weak-* topology on $\Delta(\Delta(\Omega))$, and $\calS(p)$ is
compact in that topology. Details are standard and omitted.}
\begin{equation}
\label{equ1}V_\delta(p)= \max_{ \mu\in S(p)} \left\{ (1-\delta)  \mu(q\in I) +
\delta \E_\mu\left[ V_\delta(\phi(q))\right] \right\}, \ \ \ \forall p \in \Delta(\Omega).
\end{equation}

\subsection{The (static) value of information}

We first argue that the value function $V_\delta$ is concave.
This result
has a number of implications on the structure of the advisor's optimal strategy.
We will  point at two such implications which
are especially useful in the sequel.

\begin{lemma}\label{lemm1}
The function $V_\delta$ is concave on $\Delta(\Omega)$.
\end{lemma}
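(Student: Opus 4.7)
The plan is to exploit the dynamic programming equation (\ref{equ1}) directly, together with one structural observation about splittings: for any $\mu_1 \in \calS(p_1)$, $\mu_2 \in \calS(p_2)$, and $\alpha \in [0,1]$, the mixture $\alpha \mu_1 + (1-\alpha)\mu_2$ is again a probability distribution on $\Delta(\Omega)$, whose barycenter is $\alpha p_1 + (1-\alpha) p_2$; hence it lies in $\calS(\alpha p_1 + (1-\alpha)p_2)$. Combined with the fact that the DP functional in (\ref{equ1}) is linear in $\mu$ (both the one-shot payoff $\mu(q\in I)$ and the continuation term $\E_\mu[V_\delta(\phi(q))]$ are linear in $\mu$), this should deliver concavity in one shot.

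Concretely, I would fix $p_1, p_2 \in \Delta(\Omega)$ and $\alpha \in [0,1]$, set $p := \alpha p_1 + (1-\alpha) p_2$, choose splittings $\mu_i^{*} \in \calS(p_i)$ attaining the max in (\ref{equ1}) at $p_i$, and plug $\mu := \alpha \mu_1^{*} + (1-\alpha) \mu_2^{*} \in \calS(p)$ into (\ref{equ1}) at $p$ as a feasible but \emph{a priori} suboptimal choice. Linearity of both terms in $\mu$ then yields
\[
V_\delta(p) \;\geq\; (1-\delta)\,\mu(q\in I) + \delta\,\E_\mu[V_\delta(\phi(q))] \;=\; \alpha V_\delta(p_1) + (1-\alpha) V_\delta(p_2),
\]
which is exactly concavity.

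There is no real obstacle here: the proof is a purely structural consequence of the splitting lemma and the linearity of the one-step functional in $\mu$. The only point needing minor care is the existence of maximizing $\mu_i^{*}$, which is handled by the footnote to (\ref{equ1}) (weak-$*$ compactness of $\calS(p_i)$ and upper semi-continuity of the objective). If one preferred to avoid any appeal to existence, the same argument goes through either with $\varepsilon$-optimal splittings and $\varepsilon \to 0$, or by showing inductively that each finite-horizon value function $V_\delta^n$ (defined by $V_\delta^0 \equiv 0$ and $V_\delta^{n+1}$ obtained from $V_\delta^n$ through the right-hand side of (\ref{equ1})) is concave, and then passing to the uniform limit $V_\delta^n \to V_\delta$.
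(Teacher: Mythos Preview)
Your proof is correct and is essentially the paper's argument recast through the one-step dynamic programming equation: the paper constructs a compound lottery at $p$ by first splitting into $p'$ and $p''$ and then following optimal strategies there, which at the level of (\ref{equ1}) amounts exactly to your mixture $\mu=\alpha\mu_1^{*}+(1-\alpha)\mu_2^{*}$. The only cosmetic difference is that the paper phrases it in terms of full strategies while you work directly with the DP functional; the underlying idea is identical.
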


\begin{proof}
This is a standard result in the literature on zero-sum games with
incomplete information, see, e.g., Sorin (2002, Proposition 2.2). While
the setup here is different, the proof follows the same logic, and
we only sketch it. We need to prove that $V_\delta(p)\geq
a'V_\delta(p')+a''V_\delta(p'')$ whenever $p=a'p'+a'' p''$,
with $a',a''\geq 0$ and $a'+a''=1$. Starting from $p$,  consider
the following strategy $\sigma$ for the advisor.  Pick first the
element $\mu\in \calS(p)$ that assigns probabilities $a'$ and
$a''$ to $p'$ and $p''$ respectively, and next follow an optimal strategy in
$\Gamma(p')$ or $\Gamma(p'')$, depending on the outcome of $\mu$.
Thus, the advisor's behavior at $p$ is
a so-called \emph{compound lottery} obtained as the result of first using $\mu$, and then the first choice of an optimal strategy
in either $\Gamma(p')$ or $\Gamma(p'')$.

The strategy $\sigma$
yields at $p$ a payoff equal to $a'V_\delta(p')+a''V_\delta(p'')$,
hence the result.
\end{proof}

\bigskip

The first consequence of Lemma \ref{lemm1} is that the advisor does not reveal information when the investor's belief is in the investment region.

\begin{corollary}\label{cor1}
At any $p\in I$, it is optimal for the advisor not to provide information to the investor.
\end{corollary}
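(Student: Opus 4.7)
The plan is to show that at any $p\in I$, the ``degenerate'' splitting $\mu_p$ (which reveals no information) is already optimal in the Bellman equation \eqref{equ1}. Writing $V_\delta(p)$ as the supremum over $\mu\in\calS(p)$ of the expression in braces, the candidate optimum $\mu_p$ yields $(1-\delta)\cdot 1+\delta V_\delta(\phi(p))$, because $p\in I$ gives immediate payoff $1$. So the corollary reduces to proving that for every $\mu\in\calS(p)$,
\[
(1-\delta)\mu(q\in I)+\delta\,\E_\mu[V_\delta(\phi(q))]\ \le\ (1-\delta)+\delta V_\delta(\phi(p)).
\]

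The immediate-payoff part is trivial: $\mu(q\in I)\le 1$. For the continuation part, I would use two facts. First, $\phi$ is affine (indeed linear, $\phi(q)=qM$), so by the defining property of $\calS(p)$ one has $\E_\mu[\phi(q)]=\phi(\E_\mu[q])=\phi(p)$. Second, by Lemma~\ref{lemm1}, $V_\delta$ is concave on $\Delta(\Omega)$, and since $\phi$ maps $\Delta(\Omega)$ into itself, the composition $V_\delta\circ\phi$ is also concave on $\Delta(\Omega)$. Jensen's inequality then yields $\E_\mu[V_\delta(\phi(q))]\le V_\delta(\phi(p))$, and combining the two inequalities gives the desired bound. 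Hence $\mu_p$ attains the maximum in \eqref{equ1}, which is precisely the statement that not revealing information is optimal at $p$.

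There is no real obstacle here; the only thing to double-check is that ``not providing information'' in the corollary's statement corresponds exactly to choosing the Dirac splitting $\mu_p$, and that this is indeed a legal action in $\calS(p)$ (both are immediate from the definitions given). One could also remark that the above argument actually shows the stronger statement that $\mu_p$ is an optimal splitting in the one-step problem at \emph{any} $p\in I$, not just for the infinite-horizon value; the dynamic programming equation then delivers the dynamic statement at no extra cost.
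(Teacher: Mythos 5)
Your proof is correct and follows essentially the same route as the paper's: bound the stage payoff by $1$, use the concavity of $V_\delta\circ\phi$ (via Lemma~\ref{lemm1} and the affinity of $\phi$) together with Jensen's inequality to bound the continuation payoff by $V_\delta(\phi(p))$, and observe that $\mu_p$ attains both bounds. No issues.
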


That is, the distribution $\mu_p\in \calS(p)$ that assigns
probability one to $p$ achieves the maximum in (\ref{equ1}).

The intuition is as follows.
When $p \in I$, revealing information cannot increase the current payoff,
and therefore, such revelation may only possibly be beneficial in subsequent stages.
However, every information that is disclosed today could instead be revealed tomorrow,
so that there is no reason to provide information to the investor when $p \in I$.
Note that we do not rule out the possibility that there are \emph{additional} optimal strategies that do reveal information in $I$.

\bigskip

\begin{proof} Fix $\mu\in \calS(p)$.
By the concavity of the function $q \mapsto V_\delta(\phi(q))$ and Jensen's inequality, one has
\[\E_\mu\left[ V_\delta(\phi(q))\right]\leq V_\delta(\phi(\E_\mu[q]))=V_\delta(\phi(p)),\]
with equality for $\mu=\mu_p$.
Moreover, $\mu(q\in I)$ cannot exceed 1, and is equal
to 1 for $\mu=\mu_p$.
Therefore the right-hand side in (\ref{equ1}) is at most
$(1-\delta)+\delta V_\delta(\phi(p))$,
and this upper bound is achieved for $\mu=\mu_p$.\end{proof}
\bigskip

A second corollary of Lemma \ref{lemm1} states that in the investment region,
the advisor can restrict himself to splitting the investor's belief among at most two beliefs.

\begin{corollary}\label{cor2}
At any $p\notin I$, there is an optimal choice $\mu\in \calS(p)$,
which is carried by at most two points.
\end{corollary}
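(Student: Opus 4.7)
The plan is to take any optimal splitting $\mu\in \calS(p)$ (one exists by the compactness already invoked in connection with~(\ref{equ1})) and to replace it by a two-point splitting that attains at least the same value of the objective in~(\ref{equ1}). The key geometric input is that both $I$ and $J$ are convex subsets of $\Delta(\Omega)$, being the intersections of the simplex with, respectively, a closed and an open half-space determined by $r$.

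Concretely, I would write $\alpha := \mu(I)$ and $\beta := 1-\alpha = \mu(J)$, and, whenever the corresponding mass is positive, introduce the conditional barycenters $p_I := \E_\mu[q \mid q \in I]$ and $p_J := \E_\mu[q \mid q \in J]$. Since $I$ is convex and $p \notin I$, the splitting $\mu$ cannot be supported on $I$, so $\beta > 0$. I then consider the two-point splitting $\mu' := \alpha \mu_{p_I} + \beta \mu_{p_J}$, which reduces to $\mu_p$ when $\alpha = 0$. Convexity of $I$ and of $J$ gives $p_I \in I$ and $p_J \in J$, hence $\mu'(q \in I) = \alpha = \mu(q \in I)$, while linearity of the barycenter gives $\alpha p_I + \beta p_J = \E_\mu[q] = p$, so $\mu' \in \calS(p)$. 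In particular the first term in~(\ref{equ1}) is unchanged when one passes from $\mu$ to $\mu'$.

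For the second term, $V_\delta$ is concave by Lemma~\ref{lemm1} and $\phi$ is affine, so $V_\delta \circ \phi$ is concave on $\Delta(\Omega)$, and hence on each of the convex sets $I$ and $J$. Two applications of Jensen's inequality, conditionally on $\{q \in I\}$ and on $\{q \in J\}$, give
\[ \E_\mu[V_\delta(\phi(q))] \;\leq\; \alpha\, V_\delta(\phi(p_I)) + \beta\, V_\delta(\phi(p_J)) \;=\; \E_{\mu'}[V_\delta(\phi(q))].\]
Combined with the previous paragraph, this shows that $\mu'$ attains at least the same value as $\mu$ in~(\ref{equ1}), hence is itself optimal while being carried by at most two points.

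I do not anticipate a genuine obstacle in this argument: the only mildly delicate point is verifying that $J$ is convex, so that $p_J \notin I$ and the indicator term in~(\ref{equ1}) cannot inadvertently grow when one replaces $\mu$ by $\mu'$.
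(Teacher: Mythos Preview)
Your argument is correct and is essentially the same as the paper's: both take an arbitrary (optimal) $\mu$, form the conditional barycenters $p_I=\E_\mu[q\mid q\in I]$ and $p_J=\E_\mu[q\mid q\in J]$, observe that the resulting two-point splitting preserves $\mu(q\in I)$ by convexity of $I$ and $J$, and use Jensen's inequality for $V_\delta\circ\phi$ conditionally on $I$ and on $J$ to show the continuation term weakly increases. The only cosmetic difference is that the paper treats the case $\mu(q\in I)=0$ separately (comparing directly to $\mu_p$), whereas you fold it into the general construction by noting that $\mu'$ degenerates to $\mu_p$ when $\alpha=0$.
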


That is, at each $p\notin I$ it is either optimal not to disclose
information, or to disclose information in a coarse way so that
the posterior belief of the investor  takes only two well-chosen
values in $\Delta(\Omega)$.
This result hinges on the fact that (i) the advisor's stage payoff assumes  two values only,
and (ii) the investment region $I$ is convex.

\bigskip

\begin{proof}
Let $p\notin I$ and $\mu\in \calS(p)$ be arbitrary.
Assume first that
$\mu(q\in I)=0$ and compare the distribution $\mu$ to the distribution $\mu_p$
in which no information is revealed.
The two distributions yield the same current
payoff, because $\mu(q\in I)=\mu_p(q\in I)=0$.
However, $\mu_p$ yields a (weakly) higher continuation
payoff, because by Jensen's inequality
\[\E_{\mu_p}\left[ V_\delta (\phi(q))\right]= V_\delta (\phi(p))\geq \E_{\mu}\left[ V_\delta (\phi(q))\right].\]
Assume now that $\mu(q\in I)>0$. Since $p\in J$ and $I$ is convex, one also has $\mu(q\in J)>0$.

Denote by $q_I:=\E_\mu\left[ q\mid q\in I\right]$ (resp.~$q_J:=\E_\mu\left[ q\mid q\in J\right]$)
the expected posterior belief conditional on it being in (resp.~not in) the investment region.
Then
\[p=\mu(q\in I) q_I +\mu(q\in J) q_J.\]
Denote by $\tilde \mu\in \calS(p)$ the two-point distribution that assigns
probabilities $\mu(q\in I)$ and $\mu(q\in J)$ to $q_I$ and $q_J$
respectively. Plainly,  $\tilde \mu(q\in I)=\mu(q\in I)$ and
\[\E_{\tilde \mu}\left[ V_\delta(\phi(q))\right]= \mu(q\in I) V_\delta(\phi(q_I))+\mu(q\in J) V_\delta(\phi(q_J)),\]
while
\begin{eqnarray*}\E_{ \mu}\left[ V_\delta (\phi(q))\right]&=&
\mu(q\in I) \E_{ \mu}\left[ V_\delta (\phi(q))\mid q\in I\right]
+\mu(q\in J) \E_{ \mu}\left[ V_\delta (\phi(q))\mid q\in J\right] \\
&\leq & \mu(q\in I) V_\delta (\phi\left(\E_{ \mu}\left[ q\mid q\in I\right]\right))
+\mu(q\in J) V_\delta (\phi\left(\E_{ \mu}\left[ q\mid q\in J\right]\right)) \\
&\leq & \E_{\tilde \mu}\left[ V_\delta (\phi(q))\right].
\end{eqnarray*}
To sum up, for any given $\mu$,  we have shown that either the no disclosure policy
$\mu_p$, or some two-point distribution $\tilde \mu$ yields a weakly higher
right-hand side in (\ref{equ1}) than
$\mu$. This proves the result.
\end{proof}

\bigskip

Note that it may still be optimal not to disclose information at $p\in J$. This is in particular the case whenever $p(\Omega^+)=0$.

\subsection{Main Results}

The intuition behind Corollaries \ref{cor1} and \ref{cor2} above is
clear. When $p\in I$, no information should be revealed, because it
cannot help to increase the current payoff, and can only hurt
continuation values. When $p\notin I$, there are two conflicting
effects at play. For the sake of maximizing payoffs, the advisor
should release information. But releasing information may only
hurt continuation payoffs, because of concavity.

Corollary \ref{cor2} shows qualitatively (but not explicitly) how
to compromise between the two effects. The main message of our results is that in many
cases but not all, the explicit compromise is simple: the advisor
should minimize the amount of information released, subject to
current payoffs being maximal.
We define accordingly the \emph{greedy strategy} $\sigma_*$ as
follows.

\begin{definition}\label{def_greedy}
The \emph{greedy strategy} for the advisor is the strategy $\sigma_*$
that depends on the investor's current belief $p$, and plays as follows:
\begin{description}
\item[G1]
At any $p\in I$, the strategy $\sigma_*$ discloses no information.
\item[G2]
At any $p\notin I$, the strategy $\sigma_*$ chooses a solution $\tilde \mu\in \calS(p)$ to
the problem $\max a_I$, under the constraints $p=a_I q_I+a_J q_J$,
$q_I\in I$, $a_I+a_J=1$, $a_I,a_J\geq 0$.
\end{description}
\end{definition}

Thus, the greedy strategy is stationary in the investor's belief,
which can be computed by the advisor using the investor's initial belief $p_1$ and the messages sent to the investor in earlier rounds.

An important point is that  $\sigma_*$
does not depend on the discount factor, nor on the transition matrix. So it can be implemented without knowing $\delta$ nor $\pi$.

It will be convenient to identify, whenever there is no ambiguity,
a decomposition $p=a_I q_I+a_J q_J$ with the splitting $\mu$ which selects $q_I$ and $q_J$ with probabilities $a_I $ and $a_J$ respectively.
We will call the decomposition in \textbf{G2} \emph{the greedy splitting at $p$}.

As an illustration, consider Figure 1 below, with $\Omega=\{A,B,C\}$.
\begin{center}
%\resizebox{21cm}{!}{\includegraphics{fig1bis.eps}}
\end{center}
%\usepackage[utf8]{inputenc}
%\usepackage{pgf,tikz}
%\usetikzlibrary{arrows}
\definecolor{xdxdff}{rgb}{0.49,0.49,1}
\definecolor{ffffff}{rgb}{1,1,1}
\definecolor{ttqqqq}{rgb}{0.2,0,0}
\definecolor{qqqqff}{rgb}{0,0,1}
\definecolor{uuuuuu}{rgb}{0.27,0.27,0.27}
\begin{tikzpicture}[line cap=round,line join=round,>=triangle 45,x=1.0cm,y=1.0cm]
\clip(-2.42,-3.04) rectangle (11.7,8.11);
%\fill[color=ttqqqq] (0,0) -- (8.13,0) -- (4.07,7.04) -- cycle;
%\fill[color=ttqqqq] (1.64,2.84) -- (5.93,3.82) -- (4.07,7.04) -- cycle;
\draw [color=ttqqqq] (0,0)-- (8.13,0);
\draw [color=ttqqqq] (8.13,0)-- (4.07,7.04);
\draw [color=ttqqqq] (4.07,7.04)-- (0,0);
\draw (1.64,2.84)-- (5.93,3.82);
\draw [dotted] (2.13,2.95)-- (3.74,0);
\draw [dotted] (3.52,3.27)-- (1.55,0);
\draw [dotted] (1.53,0.88)-- (5.13,4.04);
\draw (4.16,5.09) node[anchor=north west] {$I$};
\draw (5.43,1.97) node[anchor=north west] {$J$};
\draw (4.15,7.56) node[anchor=north west] {$A$};
\draw (-0.61,0.13) node[anchor=north west] {$B$};
\draw (8.27,0.23) node[anchor=north west] {$C$};
\draw (0.93,3.3) node[anchor=north west] {$B^+$};
\draw (6.05,4.3) node[anchor=north west] {$C^+$};
\draw (3.93,3.92) node[anchor=north west] {$\mathcal{F}$};
\draw (2.01,3.72) node[anchor=north west] {$q_I^{(1)}$};
\draw (3.23,3.97) node[anchor=north west] {$q^{(2)}_I$};
\draw (4.72,4.79) node[anchor=north west] {$q^{(3)}_I$};
\draw (3.64,0.01) node[anchor=north west] {$q^{(1)}_J$};
\draw (1.09,0.05) node[anchor=north west] {$q^{(2)}_J$};
\draw (0.96,1.38) node[anchor=north west] {$q^{(3)}_J$};
\draw (2.85,2.21) node[anchor=north west] {$p$};
\draw [color=ttqqqq] (1.64,2.84)-- (5.93,3.82);
\draw [color=ttqqqq] (5.93,3.82)-- (4.07,7.04);
\draw [color=ttqqqq] (4.07,7.04)-- (1.64,2.84);
\begin{scriptsize}
\draw [fill=uuuuuu] (0,0) circle (1.5pt);
\draw [fill=qqqqff] (8.13,0) circle (1.5pt);
\draw [fill=ffffff] (4.07,7.04) circle (1.5pt);
\draw [fill=xdxdff] (1.64,2.84) circle (1.0pt);
\draw [fill=xdxdff] (5.93,3.82) circle (1.0pt);
\draw [fill=xdxdff] (2.13,2.95) circle (1.0pt);
\draw [fill=xdxdff] (3.74,0) circle (1.0pt);
\draw [fill=xdxdff] (3.52,3.27) circle (1.0pt);
\draw [fill=xdxdff] (1.55,0) circle (1.0pt);
\draw [fill=uuuuuu] (2.7,1.91) circle (1.0pt);
\draw [fill=qqqqff] (5.13,4.04) circle (1.0pt);
\draw [fill=xdxdff] (1.53,0.88) circle (1.0pt);
\end{scriptsize}
\end{tikzpicture}

\centerline{Figure 1: Three splittings at $p$.}

\bigskip

Three different splittings at $p$ have been drawn:
$p=a_I^{(i)} q_I^{(i)}+a_J^{(i)} q_J^{(i)}$, $i\in \{1,2,3\}$,
with $\displaystyle a_I^{(i)}=\frac{\|p-q_J^{(i)}\|_2}{\|q_I^{(i)}-q_J^{(i)}\|_2}$,
so that $a_I^{(1)}>a_I^{(2)}>a_I^{(3)}$. Since $a_I^{(i)}$ is the current payoff under splitting $i$, the first of the three splittings yields a higher payoff.

For every two points $p_1,p_2 \in \Delta(\Omega)$ denote by $(p_1,p_2)$ the line that passes through $p_1$ and $p_2$,
and by $[p_1,p_2]$ the line segment that connects $p_1$ and $p_2$.
The noninvestment region $J$ is divided into two triangles by the segment $[B^+,C]$, see Figure 2 below.

\definecolor{xdxdff}{rgb}{0.49,0.49,1}
\definecolor{ffffff}{rgb}{1,1,1}
\definecolor{ttqqqq}{rgb}{0.2,0,0}
\definecolor{qqqqff}{rgb}{0,0,1}
\definecolor{uuuuuu}{rgb}{0.27,0.27,0.27}
\begin{tikzpicture}[line cap=round,line join=round,>=triangle 45,x=1.0cm,y=1.0cm]
\clip(-2.42,-3.01) rectangle (11.7,8.11);
%\fill[color=ttqqqq] (0,0) -- (8.13,0) -- (4.07,7.04) -- cycle;
%\fill[color=ttqqqq] (1.64,2.84) -- (5.93,3.82) -- (4.07,7.04) -- cycle;
\draw [color=ttqqqq] (0,0)-- (8.13,0);
\draw [color=ttqqqq] (8.13,0)-- (4.07,7.04);
\draw [color=ttqqqq] (4.07,7.04)-- (0,0);
\draw (1.64,2.84)-- (5.93,3.82);
\draw (4.16,5.09) node[anchor=north west] {$I$};
\draw (5.43,1.97) node[anchor=north west] {$J$};
\draw (4.15,7.56) node[anchor=north west] {$A$};
\draw (-0.61,0.13) node[anchor=north west] {$B$};
\draw (8.27,0.23) node[anchor=north west] {$C$};
\draw (0.93,3.3) node[anchor=north west] {$B^+$};
\draw (6.05,4.3) node[anchor=north west] {$C^+$};
\draw (3.69,4.04) node[anchor=north west] {$\mathcal{F}$};
\draw [color=ttqqqq] (1.64,2.84)-- (5.93,3.82);
\draw [color=ttqqqq] (5.93,3.82)-- (4.07,7.04);
\draw [color=ttqqqq] (4.07,7.04)-- (1.64,2.84);
\draw [dash pattern=on 2pt off 2pt] (1.64,2.84)-- (8.13,0);
\draw [dotted] (4.42,3.47)-- (8.13,0);
\draw [dotted] (1.64,2.84)-- (2.7,0);
\draw (2.46,1.56) node[anchor=north west] {$p$};
\draw (5.67,3.02) node[anchor=north west] {$p'$};
\begin{scriptsize}
\draw [fill=uuuuuu] (0,0) circle (1.5pt);
\draw [fill=qqqqff] (8.13,0) circle (1.5pt);
\draw [fill=ffffff] (4.07,7.04) circle (1.5pt);
\draw [fill=xdxdff] (1.64,2.84) circle (1.0pt);
\draw [fill=xdxdff] (5.93,3.82) circle (1.0pt);
\draw [fill=qqqqff] (2.28,1.13) circle (1.0pt);
\draw [fill=qqqqff] (5.53,2.44) circle (1.0pt);
\draw [fill=xdxdff] (4.42,3.47) circle (1.0pt);
\draw [fill=uuuuuu] (2.7,0) circle (1.0pt);
\end{scriptsize}
\end{tikzpicture}

\centerline{Figure 2: The decomposition of the noninvestment region.}

\bigskip

Because the line $(B^+,C^+)$ has a positive slope,
every point $p$ in the lower triangle $(B^+,B,C)$ is split by the greedy strategy $\sigma_*$ between $B^+ $ and a point on the line segment $[B,C]$,
and points $p'$ in the upper triangle are split by $\sigma_*$ between $C$ and a point on the line segment $[B^+,C^+]$.

\bigskip
\noindent
%\resizebox{16cm}{!}{\includegraphics{fig3bis.eps}}

Only in the case where the line $(B^+,C^+)$ is parallel to the line $(B,C)$ are there several optimal splittings.
This is a nongeneric situation\footnote{see lemma \ref{lemma:optimal solution} later.}
where $r(B) = r(C)$.

\begin{theorem}\label{th1}
If $|\Omega|=2$, the greedy strategy is optimal, irrespective of the initial distribution $p_1$.
\end{theorem}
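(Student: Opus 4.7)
The plan is to verify that the greedy value function is a fixed point of the Bellman operator. Since $|\Omega|=2$, identify beliefs with $p\in[0,1]$, the probability of state $A$; without loss of generality assume $r(A)>0>r(B)$ (otherwise the problem is trivial). Then $I=[p^*,1]$ and $J=[0,p^*)$ are intervals, with $p^*=-r(B)/(r(A)-r(B))$. The transition $\phi(p)=\alpha p+\beta$ is affine with $\alpha=\pi(A\mid A)-\pi(A\mid B)\in[-1,1)$ (by irreducibility $\pi(A\mid A)<1$, hence $\alpha<1$) and $\beta=\pi(A\mid B)>0$. At any $p\in J$ the greedy splitting is uniquely the pair of posteriors $(0,p^*)$, with weights $(p^*-p)/p^*$ and $p/p^*$.

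Let $V_*$ denote the value of the greedy strategy. By construction $V_*$ is affine on $[0,p^*]$, linearly interpolating between $V_*(0)=\delta V_*(\phi(0))$ and $V_*(p^*)=(1-\delta)+\delta V_*(\phi(p^*))$, and satisfies the no-disclosure recursion $V_*(p)=(1-\delta)+\delta V_*(\phi(p))$ on $[p^*,1]$. Feasibility gives $V_*\leq V_\delta$ pointwise, so it remains to prove $V_*\geq V_\delta$. For this it suffices to verify $V_*\geq TV_*$, where $T$ is the Bellman operator on the right-hand side of (\ref{equ1}); indeed, $T$ is monotone and a $\delta$-contraction with unique fixed point $V_\delta$, so iterating yields $V_*\geq T^nV_*\to V_\delta$.

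To verify $V_*\geq TV_*$, first check that $V_*$ is concave on $[0,1]$. The function is affine on $[0,p^*]$, and on $[p^*,1]$ it is obtained by unrolling the no-disclosure recursion for as long as the trajectory $\phi^n(p)$ stays in $I$; concavity reduces to a direct slope comparison at $p^*$. Granted concavity, the argument for Corollary~\ref{cor1} goes through verbatim with $V_*$ in place of $V_\delta$, giving $TV_*(p)=V_*(p)$ for $p\in I$; similarly Corollary~\ref{cor2} reduces the maximization defining $TV_*(p)$ on $J$ to two-point splittings $(q_I,q_J)$ with $q_I\in I,\,q_J\in J$. What remains is to show that any such splitting has value at most $V_*(p)$, equivalently that the chord through $(0,W_*(0))$ and $(p^*,W_*(p^*))$ dominates $W_*(q):=(1-\delta)\mathbf{1}_I(q)+\delta V_*(\phi(q))$ on $[0,1]$.

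The chord domination is the main obstacle. Two regimes appear according to whether $\phi(p^*)\leq p^*$ (invariant measure $m\in J$) or $\phi(p^*)>p^*$ ($m\in I$). In the first regime $\phi([0,p^*])\subseteq[0,p^*]$, so $V_*\circ\phi$ is itself affine on $[0,p^*]$ and the concavity of $V_*$ already delivers the required inequality. In the second regime $V_*\circ\phi$ has a single kink inside $[0,p^*]$; solving the linear system above yields closed-form expressions for $V_*(0)$ and $V_*(p^*)$, and comparing the chord to $W_*$ at the kink and at points of $I$, the inequality reduces after short algebra to $|\alpha|\,\delta<1$, which is automatic since $|\alpha|\leq 1$ and $\delta<1$. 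This completes the verification, so $V_*=V_\delta$.
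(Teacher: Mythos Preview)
Your overall strategy matches the paper's: show that the greedy value $V_*$ (the paper's $\gamma$) is a super-solution of the Bellman operator, which---once concavity is in hand and the analogues of Corollaries~\ref{cor1}--\ref{cor2} are applied---reduces to checking $d:=V_*-\delta V_*\circ\phi\geq 0$ on $J$ (your ``chord domination''). This is exactly Lemma~\ref{lemmDPP} together with the reduction at the start of Section~\ref{sec-twostates}.

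The execution, however, has a genuine gap: your two-regime split (by the sign of $\phi(p^*)-p^*$, equivalently $m\lessgtr p^*$) ignores the sign of $\alpha$. In regime~1 you assert $\phi([0,p^*])\subseteq[0,p^*]$, but this is false when $\alpha<0$: take $\pi(A\mid A)=0.1$, $\pi(A\mid B)=0.6$, so $\alpha=-0.5$ and $m=0.4$; with $p^*=0.5$ one gets $\phi(0)=0.6>p^*$, so $V_*\circ\phi$ is not affine on $[0,p^*]$ and your argument does not go through. Your regime~2 discussion likewise does not cover $\alpha<0$, where $V_*$ is constant on a middle interval $[p^*,q^*]$ and then decreasing on $[q^*,1]$ (the paper's Case~4), not ``a single kink''. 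Separately, the concavity step (``a direct slope comparison at $p^*$'') tacitly assumes $V_*$ is affine on $I$; but when $m\leq p^*$ and $\alpha\geq 0$ the no-disclosure trajectory from $p\in I$ may remain in $I$ for many rounds, so $V_*$ is piecewise affine on $I$ with arbitrarily many pieces as $\alpha\to 1$, and a single slope comparison does not settle concavity. The paper addresses all of this with a four-way split on $(\mathrm{sign}(m-p^*),\mathrm{sign}(\alpha))$, and in the most delicate case ($m\leq p^*$, $\alpha\geq 0$) drops the concavity route altogether, proving directly that $\sigma_*$ attains the stagewise upper bound $\gamma_*$.
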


Although interesting in its own sake, the two-state problem is specific in
many respects, and we next investigate the robustness of the
conclusion of Theorem \ref{th1}.

From now on, we restrict ourselves to a class of Markov chains,
in which shocks occur at random times, and the state remains unchanged between two consecutive shocks.
When a shock occurs, the next state is drawn according to a fixed distribution (and may thus coincide with the previous state).
The durations between successive shocks are i.i.d.~random variables with a geometric distribution.
Note that the invariant distribution $m$ is then equal to the fixed distribution according to which new states are drawn.
Equivalently, these are the chains with a transition function given by
\begin{eqnarray}
\pi(\omega \mid \omega)&=&(1-\lambda)m(\omega)+\lambda,\\
\pi(\omega' \mid \omega)&=&(1-\lambda) m(\omega')\mbox{ if }\omega'\neq \omega,
\end{eqnarray}
for some $\lambda\in [0,1)$. Note that the drift map $\phi:\Delta(\Omega)\to \Delta (\Omega)$
that describes the evolution of the investor's belief when no new information is provided
is given by
\[\phi(p)-m=\lambda(p-m),\]
so that $\phi$ is an homothety on the simplex with center $m$ and ratio $\lambda$. Notice that we only consider  homotheties with non negative ratio.

It turns out that even in this restricted class of chains, and with as few as three states,  Theorem \ref{th1} does not extend without qualifications.

\begin{proposition}\label{prop_counter}
Let $|\Omega|=3$. The greedy strategy need not be optimal for all initial distributions.
\end{proposition}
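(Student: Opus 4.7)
The plan is to prove the proposition by exhibiting an explicit counterexample: parameters $(r,m,\lambda,\delta)$ and an initial belief $p_1$ together with a competing strategy $\sigma'$ whose value at $p_1$ strictly exceeds that of the greedy strategy~$\sigma_*$. I will take $\Omega=\{A,B,C\}$ and choose the reward vector $r$ so that the investment frontier $\calF$ is a chord of the simplex crossing $[A,B]$ at $B^+$ and $[A,C]$ at $C^+$ (as in Figure~2). In the shock-renewal subclass allowed here, the drift $\phi$ is the homothety with centre $m$ and ratio $\lambda$; I will place $m$ strictly inside the noninvestment region $J$, so that every belief in $I$ is eventually pulled back into $J$ and continuation values are nontrivial. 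The initial belief $p_1$ will lie in the upper triangle $(B^+,C^+,C)$ of~$J$, where the trade-off between current payoff and continuation information is genuine.

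First I would describe the greedy trajectory. By \textbf{G2} the greedy splitting of $p_1$ is $p_1 = a^* q^*_I + (1-a^*)\,C$, where $q^*_I$ is the intersection of the line through $p_1$ and $C$ with $\calF$. From $q^*_I\in I$ no information is disclosed (Corollary~\ref{cor1}) and the subsequent beliefs evolve deterministically under $\phi$ until they re-enter $J$; from $C$ the orbit of $\phi$ is likewise deterministic, and again returns eventually to $J$ where $\sigma_*$ splits in the same greedy fashion. Because $\phi$ is affine and the greedy rule is deterministic, the set of beliefs visited by $\sigma_*$ from $p_1$ is contained in a small number of lines through $m$, and $V_{\sigma_*}(p_1)$ is characterised as the unique solution of a finite linear system obtained by restricting the Bellman equation~(\ref{equ1}) to those beliefs.

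Next I would introduce the competing strategy $\sigma'$ that, at $p_1$, uses a non-greedy splitting $p_1 = \tilde a\,\tilde q_I + (1-\tilde a)\,\tilde q_J$ with $\tilde q_I\in \calF$ and $\tilde q_J$ chosen \emph{in the interior} of the edge $[B,C]$ rather than at the vertex~$C$, and from every other belief plays greedily. By maximality of $a^*$ one has $\tilde a < a^*$, so $\sigma'$ incurs an immediate loss $(1-\delta)(a^*-\tilde a)$. However, $\tilde q_J$ is less extreme than~$C$, so the drifted belief $\phi(\tilde q_J)$ lies in a more favourable region of the simplex than $\phi(C)$, and yields a strictly higher greedy continuation value. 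Writing $V_{\sigma'}(p_1) - V_{\sigma_*}(p_1) = -(1-\delta)(a^*-\tilde a) + \delta\,\Delta_{\mathrm{cont}}$, where $\Delta_{\mathrm{cont}}$ collects the differences of greedy continuation values under the two splittings, the goal becomes to arrange the parameters so that this expression is strictly positive.

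The main obstacle is calibration. Because $\sigma_*$ is optimal at the myopic level, the continuation gain $\Delta_{\mathrm{cont}}$ is small and has to be leveraged: $\delta$ must be taken close to~$1$ (so continuation dominates), $\lambda$ close to~$1$ (so the careful handling of $\tilde q_J$ today still matters after several applications of $\phi$), and $m$ deep inside~$J$ (so the advantage persists rather than being washed out once the greedy continuation orbit briefly re-enters~$I$). This is precisely the role of the numerical exploration acknowledged in the introduction. Once suitable parameters are located, both $V_{\sigma_*}(p_1)$ and $V_{\sigma'}(p_1)$ are values of explicit finite linear systems, their comparison reduces to a finite rational computation, and the resulting inequality $V_\delta(p_1)\ge V_{\sigma'}(p_1) > V_{\sigma_*}(p_1)$ contradicts the optimality of the greedy strategy at $p_1$.
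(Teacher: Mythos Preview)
Your proposal is a plan rather than a proof: you never fix the parameters $(r,m,\lambda,\delta,p_1,\tilde q_I,\tilde q_J)$ and never verify the key inequality. Two of your structural claims are also unjustified. First, the assertion that under $\sigma_*$ ``the set of beliefs visited \dots\ is contained in a small number of lines through $m$'' and that $V_{\sigma_*}(p_1)$ solves a \emph{finite} linear system is not true in general: the greedy orbit from a generic $p_1$ need not close up. Second, the sentence ``$\phi(\tilde q_J)$ lies in a more favourable region \dots\ and yields a strictly higher greedy continuation value'' is precisely the point at issue; concavity of $V_\delta$ does not by itself tell you that pulling $q_J$ inward from $C$ helps, because you are simultaneously lowering $a_I$ and moving $q_I$ along $\calF$.

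The paper's construction is both different and much simpler, and it is worth seeing why. The alternative strategy is not a non-greedy \emph{splitting} at $p_1$; it is \emph{no disclosure} in round~1, followed by a well-chosen splitting in round~2. The parameters are explicit: $m$ is placed at a vertex $\omega_2\in\Omega^-$, $\lambda=\tfrac12$, the frontier is made very asymmetric (one vertex of $\calF$ is $\ep\omega_1+(1-\ep)\omega_2$ with $\ep$ small), and $p_1$ lies on the edge $[\omega_1,\omega_3]$. Because $m=\omega_2$, the segment $[\omega_2,\omega_3]\subset J$ is $\phi$-invariant, so the greedy continuation from $\omega_3$ is exactly~$0$; this gives the clean upper bound $\gamma(p_1)\le 4\ep$ with no linear system to solve. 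Waiting one round lets $p_2=\phi(p_1)$ acquire mass $\tfrac12$ on $\omega_2$, and a single splitting of $p_2$ then puts the posterior in $I$ with probability $\approx\tfrac12$, yielding a lower bound of order $\delta(1-\delta)/2$. For small $\ep$ this beats $4\ep$ for \emph{any} fixed $\delta\in(0,1)$ and with $\lambda=\tfrac12$, so your intuition that one must take $\delta,\lambda\to 1$ is not needed here.
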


Indeed, we exhibit in Section \ref{sec_counter} a simple counterexample in which, for some initial distributions, it is strictly optimal not to disclose any information in early stages.

Yet, this counterexample hinges on fairly extreme choices of the invariant measure
and the initial distribution. In many cases the greedy strategy is a very relevant strategy. We substantiate this claim by means of three results.

First, it may be natural to assume that the initial distribution and the invariant measure coincide.\footnote{Or are very close. This is in particular relevant when the interaction between the advisor and the investor starts at a given date, long after the Markov chain has started evolving.}  In that case, the conclusion of Theorem \ref{th1} extends to an arbitrary number of states.

\begin{theorem}\label{th2}
Let the cardinality of $\Omega$ be arbitrary and suppose that $\phi$ is an homothety. If $p_1=m$, then the greedy strategy is optimal.
\end{theorem}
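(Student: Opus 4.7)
The plan is to prove matching upper and lower bounds for $V_\delta(m)$ by linearizing the concavification of $\mathbf{1}_I$ at $m$.

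\emph{Reduction and upper bound.} The case $m\in I$ is trivial: greedy never reveals, investment occurs in every round, and $V_\delta(m)=1$. Assume henceforth $m\notin I$. Set
\[a^*(p):=\sup_{\mu\in\calS(p)}\mu(q\in I),\]
which equals $1$ on $I$ and, on $J$, coincides with the weight $a_I$ in Definition~\ref{def_greedy} (by Corollary~\ref{cor2}). For any disclosure policy, Bayes consistency $\E[q_n\mid p_n]=p_n$, the affinity of $\phi$, and $\phi(m)=m$ give by induction $\E[p_n]=\E[q_n]=m$ for every $n\ge 1$. In particular the distribution of $q_n$ is a splitting at $m$, so $\Prob(q_n\in I)\leq a^*(m)$ at every round; summing the discounted payoffs gives $V_\delta(m)\leq a^*(m)$.

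\emph{A linear support of $a^*$ at $m$.} Because $a^*$ is the concavification of $\mathbf{1}_I$ on $\Delta(\Omega)$, it is concave and piecewise affine, and standard convex duality produces an affine function $L:\dR^\Omega\to\dR$ with $L\ge\mathbf{1}_I$ on $\Delta(\Omega)$ and $L(m)=a^*(m)$. Let
\[R:=\{p\in\Delta(\Omega):L(p)=a^*(p)\},\]
a closed convex set (the zero set of the convex function $L-a^*\ge 0$), which contains $m$ and on which $a^*\equiv L$.

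\emph{Greedy keeps beliefs in $R$.} The key claim is that the greedy belief process $(p_n)_{n}$, starting from $p_1=m$, remains in $R$ almost surely. The core algebraic step is: if $p\in R\cap J$ and $(q_I,q_J,a^*(p))$ is a greedy splitting at $p$, then $L(q_I)=1$ and $L(q_J)=0$. Indeed, $L(q_I)\geq 1$ and $L(q_J)\geq 0$ as $L\ge\mathbf{1}_I$, while applying $L$ to $p=a^*(p)q_I+(1-a^*(p))q_J$ yields $a^*(p)=L(p)=a^*(p)L(q_I)+(1-a^*(p))L(q_J)$, which forces both equalities. In particular $q_I,q_J\in R$, and since $R$ is convex and contains $m$, the drifted images $\phi(q_I)=\lambda q_I+(1-\lambda)m$ and $\phi(q_J)=\lambda q_J+(1-\lambda)m$ also lie in $R$. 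At a belief $p_n\in I\cap R$, greedy simply drifts to $\phi(p_n)=\lambda p_n+(1-\lambda)m\in R$ by the same convexity. The claim follows by induction on $n$.

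\emph{Conclusion.} The affine identity $a^*=L$ on $R$ gives, under greedy, $\E[a^*(p_n)]=L(\E[p_n])=L(m)=a^*(m)$ for every $n$. Since the probability of investment in round $n$ under greedy is exactly $\E[a^*(p_n)]$, the greedy value at $m$ equals $a^*(m)$, matching the upper bound of paragraph~1. The main obstacle in this plan is the invariance of $R$ under greedy: one has to combine the extremal equalities $L(q_I)=1$, $L(q_J)=0$ (which force split points into $R$) with the fact that the homothetic form $\phi(p)=\lambda p+(1-\lambda)m$ sends $R$ into itself by convexity. Both inputs -- the geometry of the concavification at $m$ and the homothety structure of $\phi$ -- are essential; the argument would break if $\phi$ were an arbitrary affine map, or if $p_1\ne m$, consistent with Proposition~\ref{prop_counter}.
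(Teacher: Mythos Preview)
Your proof is correct and takes a genuinely different route from the paper's. The paper proves the stronger Theorem~\ref{th2bis}: it uses the explicit LP description of the greedy splitting to identify the polytope $\bar\calO(k)$ (the region where $\widehat r$ is affine and which contains $m$), checks via Lemma~\ref{lemma:stable} that $\bar\calO(k)$ is closed under the greedy splitting, and then uses the homothety to conclude stability under $\phi$, so that $\E_{\sigma_*}[\widehat r(p_n)]=\widehat r(\bar p_n)$ for every $n$. Your argument replaces this explicit construction by an abstract supporting hyperplane $L$ of the concave function $a^*=\widehat r$ at $m$ and works on the contact set $R=\{L=a^*\}$. The step showing that the greedy split points $q_I,q_J$ lie in $R$ is neat: the equality $a^*(p)=a^*(p)L(q_I)+(1-a^*(p))L(q_J)$ together with $L\ge \mathbf{1}_I$ forces $L(q_I)=1$, $L(q_J)=0$, and then $a^*(q_I)=1$, $a^*(q_J)\le L(q_J)=0$ give $q_I,q_J\in R$ without ever invoking the LP structure.

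What each approach buys: the paper's explicit $\bar\calO(k)$ is concretely described and feeds directly into the proof of Theorem~\ref{th3}, where one needs the polytope and its complement to have a specific shape. Your convex-analytic argument is shorter and more conceptual, and it in fact extends with no extra work to any $p_1\in R$ (since $R$ is convex, contains $m$, and is stable under greedy and under $\phi$), so you also recover a version of Theorem~\ref{th2bis} with $R$ in place of $\bar\calO(k)$. Two minor remarks: you do not need the piecewise-affinity claim for $a^*$ (concavity plus $m$ in the relative interior of $\Delta(\Omega)$ already gives the supporting affine $L$), and you should note that the forcing argument still goes through in the degenerate case $a^*(p)=0$ (only $L(q_J)=0$ is needed there, and $q_I$ carries no mass).
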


In fact, we will identify a polytope of initial distributions in $\Delta(\Omega)$
of full dimension that contains $m$ in its interior,
for which the greedy strategy is optimal. This allows us to prove that, irrespective of the initial distribution, it is eventually optimal to use the greedy strategy.

\begin{theorem}\label{th3}
Let the cardinality of $\Omega$ and the initial distribution be arbitrary, and suppose that $\phi$ is an homothety.
There is an optimal strategy $\sigma$ and an a.s.~finite stopping time after which $\sigma$ coincides with the greedy strategy.
\end{theorem}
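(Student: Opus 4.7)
The plan builds on the polytope $P\subset \Delta(\Omega)$ that the proof of Theorem \ref{th2} produces: a full-dimensional polytope containing $m$ in its interior and such that the greedy strategy is optimal at every $p\in P$. The crucial additional property of $P$, which I would extract from that proof, is \emph{greedy-invariance}: if $p\in P$ and $\tilde\mu$ is the greedy splitting at $p$, then $\phi(q)\in P$ for every $q$ in the support of $\tilde\mu$. This guarantees that once the belief process enters $P$, it remains in $P$ and greedy remains optimal from then on.

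Given $P$, I define the strategy $\sigma$ by $\sigma=\sigma_*$ on $P$, and, on $\Delta(\Omega)\setminus P$, by some optimal splitting $\mu^\ast_p\in \calS(p)$ achieving the maximum in (\ref{equ1}); existence follows from Corollary \ref{cor2}. Among all maximizers I break ties toward the least-revealing one, so that whenever $\mu_p$ itself is optimal — in particular at every $p\in I$ (Corollary \ref{cor1}) and at every $p\in J$ with $p(\Omega^+)=0$ — no information is disclosed. A standard one-shot-deviation verification against the Bellman equation (\ref{equ1}) shows that $\sigma$ is optimal from any initial belief. Setting $\tau:=\inf\{n\geq 1: p_n\in P\}$, by the greedy-invariance of $P$ the strategy $\sigma$ coincides with $\sigma_*$ from stage $\tau$ onwards, so the whole question reduces to proving $\tau<\infty$ $\Prob$-a.s.

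For this last step I would exploit that $\phi(x)=m+\lambda(x-m)$ is a strict contraction toward $m$ with ratio $\lambda<1$, and that $m\in\mathrm{int}(P)$: pick $\rho>0$ with $B(m,\rho)\cap\Delta(\Omega)\subset P$, and pick $N$ with $\phi^N(\Delta(\Omega))\subset P$. The Lyapunov function $h(p)=\|p-m\|^2$ then satisfies, using $\E_\mu[q]=p$,
\[
\E\bigl[h(p_{n+1})\mid \calH_n\bigr]=\lambda^2\bigl(h(p_n)+v_n\bigr),\qquad v_n:=\E\bigl[\|q_n-p_n\|^2\mid \calH_n\bigr]\in[0,D^2],
\]
where $D$ is the diameter of $\Delta(\Omega)$. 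This keeps $\E[h(p_n)]$ uniformly bounded and, together with the Feller property of the belief chain on the compact set $\Delta(\Omega)$, forces the chain to visit the ball $B(m,\rho)$ infinitely often; combining with the tie-breaking rule (which makes no-disclosure the default in $I$ and in the "negative-payoff" region, so that on sufficiently many excursions the belief follows $\phi^k$ for $k\geq N$ consecutive steps) yields $\tau<\infty$ a.s. The main obstacle is precisely this last recurrence argument: controlling the interplay between large splittings in $J$ (which can disperse beliefs by up to $D$) and the contracting drift $\phi$. The argument must be done uniformly over the history-dependent choices of $\mu^*_p$; once that is in hand, the Lyapunov bound combined with Markov recurrence closes the proof.
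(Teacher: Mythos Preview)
Your construction of $\sigma$ (greedy on the polytope $P$, a Bellman-optimal selection elsewhere) and of the stopping time $\tau$ is fine, and your identity $\E[h(p_{n+1})\mid\calH_n]=\lambda^2(h(p_n)+v_n)$ is correct. The gap is exactly the step you flag as the ``main obstacle'' and then do not resolve: bounded $\E[h(p_n)]$ does \emph{not} yield recurrence to the ball $B(m,\rho)$. Since $v_n$ can be as large as $D^2$ at every step, the stationary Lyapunov bound is $\lambda^2 D^2/(1-\lambda^2)$, which will typically be far larger than $\rho^2$; nothing prevents an optimal policy from perpetually splitting so that $q_n$ hops between points of $\bar J\setminus P$ on either side of $P$, with $\phi$ never getting $N$ undisturbed iterations. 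Your tie-breaking argument does not help: outside $I$ and outside $\Delta(\Omega^-)$ the optimal action is generically a nontrivial split, so there is no reason the process should ever enter a region where no-disclosure is selected for $N$ consecutive steps. Invoking the Feller property on a compact space guarantees existence of an invariant measure, but not that its support meets $B(m,\rho)$.

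The paper closes this gap by a structural argument rather than a drift argument. The key lemma (Lemma~\ref{lemm19}) shows that if $p\in\bar J\setminus P$ and $p=a_Iq_I+a_Jq_J$ is an \emph{optimal} splitting, then the whole segment $[q_I,q_J]$ misses $P$; the proof uses that on $P$ one has $V_\delta=\gamma_*$, so any optimal splitting at a point of $P$ must be greedy, hence (by Lemma~\ref{lemma:stable}) stays in $P$. This no-crossing property, together with the convexity of the connected components of $\bar J\setminus P$, lets one separate each component from $P$ by a hyperplane $H$; since $\E[q_n]=\bar p_n=\phi^{(n-1)}(p_1)$ eventually lies on the $m$-side of $H$, a uniform lower bound forces $q_n$ to land on that side with probability at least some $c>0$ every $\bar n$ steps, and on that side $q_n$ cannot be in the component, hence must already be in $P$. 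This is the idea your Lyapunov approach is missing: you need to rule out that optimal splittings jump across $P$, and the paper does so by exploiting the equality $V_\delta=\gamma_*$ on $P$ rather than any quantitative contraction estimate.
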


Under the assumption that no two states yield the same payoff, the conclusion of Theorem \ref{th3} holds for \emph{every} optimal strategy $\sigma$.
That is, the suboptimality identified in Proposition \ref{prop_counter} is typically transitory. On almost every history, the advisor will at some point switch to the greedy strategy.
 Whether or not it is possible to put a \emph{deterministic} upper bound on this stopping time is  unknown to us.

We finally provide an in-depth analysis of the three-state case.
As it turns out, $\sigma_*$ is optimal in most circumstances.

When $|\Omega^-|=2$, we use the notations of Figure 1: $\Omega^-=\{B,C\}$ with
$r(B)\geq r(C)$, and the vertices of $\calF$ are denoted by $B^+$ and $C^+$.

\begin{theorem}\label{th4}
Assume $|\Omega|=3$  and suppose that $\phi$ is an homothety. The strategy $\sigma_*$ is optimal in the following cases:
\begin{itemize}
\item  $|\Omega^-|=1$;
\item $|\Omega^-|=2$ and $m$ belongs to either $I$ or to the triangle $(C^+,B^+,C)$.
\end{itemize}\end{theorem}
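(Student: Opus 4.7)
My plan is to verify the Bellman optimality condition for the value function $W := V_{\sigma_*}$ induced by the greedy strategy. Since $\sigma_*$ is stationary, $W$ satisfies
\[
W(p) = (1-\delta)\,a_I^*(p) + \delta\,\bigl[a_I^*(p)\,W(\phi(q_I^*(p))) + a_J^*(p)\,W(\phi(q_J^*(p)))\bigr].
\]
By a standard dynamic programming argument, to conclude $W=V_\delta$ (and hence that $\sigma_*$ is optimal) it suffices, by Corollary~\ref{cor2}, to prove that for every $p\in J$ and every two-point splitting $(a_I,q_I;a_J,q_J)$ of $p$ with $q_I\in I$ and $q_J\in J$,
\[
(1-\delta)\,a_I + \delta\,\bigl[a_I\,W(\phi(q_I)) + a_J\,W(\phi(q_J))\bigr] \;\le\; W(p). \qquad (\ast)
\]

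The first step is to exploit the homothety $\phi(q)=\lambda q+(1-\lambda)m$, which preserves every convex set containing $m$. In particular, whenever $m\in I$, the set $I$ is $\phi$-invariant and the orbit never leaves $I$ once it enters, so $W\equiv 1$ on $I$. In case $|\Omega^-|=1$, the region $J$ is a triangle whose unique non-frontier vertex is $C$, so every greedy splitting at $p\in J$ uses $q_J^*=C$; the non-investment branch of the greedy orbit is exactly $\{\phi^n(C):n\ge 0\}$, and $W$ along this orbit is determined by a simple linear recursion. In case $|\Omega^-|=2$ with $m$ in the upper triangle $(C^+,B^+,C)$, the greedy splitting in the upper triangle again uses $q_J^*=C$, and because $\phi$ is a homothety centered at $m$ the upper triangle is mapped into itself; the orbit $\{\phi^n(C)\}$ therefore remains in the upper triangle and $W$ on it can once more be written explicitly by solving a finite linear system. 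Along each line through $C$ the function $p\mapsto W(p)$ turns out to be affine in the greedy weight $a_I^*(p)$.

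With $W$ explicit on the greedy orbit, I verify $(\ast)$ for arbitrary two-point splittings by exploiting piecewise-affineness and concavity. On the cones of $J$ emanating from the relevant non-frontier vertex, $W$ is affine in the greedy parameter, so $(\ast)$ collapses to a one-dimensional inequality along a line through $C$. Along such a line only two pure beliefs carry weight, the third coordinate being fixed, so the subproblem is isomorphic to a two-state game and Theorem~\ref{th1} (or a direct monotonicity check on the resulting affine expressions) applies. For splittings whose support is not aligned with the greedy direction, I would use the concavity of $W$, which follows from Lemma~\ref{lemm1} once the equality $W=V_\delta$ is established on the orbit, together with Jensen's inequality to push the comparison back onto aligned splittings.

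The main obstacle is the third sub-case ($|\Omega^-|=2$ with $m$ in the upper triangle but $m\notin I$): here the orbit of $\sigma_*$ does not stay in $I$, and one must keep track of a non-trivial dynamics inside $J$. The crucial structural observation is the $\phi$-invariance of the upper triangle: since $\phi$ is a homothety centered at $m$ and $m$ lies in the upper triangle, every image $\phi^n(C)$ remains in that triangle, where the greedy compromise is locally consistent. This invariance is precisely what fails when $m$ lies in the lower triangle $(B^+,B,C)$ and is the geometric reason behind the counterexample of Proposition~\ref{prop_counter}; establishing it, together with an inductive bound on $W(\phi^n(C))$ using the explicit recursion, is what makes $(\ast)$ go through in the favorable case.
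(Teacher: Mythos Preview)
Your proposal has two genuine gaps.

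First, the concavity argument is circular. You write that concavity of $W$ ``follows from Lemma~\ref{lemm1} once the equality $W=V_\delta$ is established on the orbit,'' but Lemma~\ref{lemm1} gives concavity of $V_\delta$, not of $W$; transferring it to $W$ requires $W=V_\delta$ on all of $\Delta(\Omega)$, which is precisely the conclusion you are after. The paper avoids this by proving concavity of $\gamma\,(=W)$ \emph{directly} from the greedy dynamics, via Lemma~\ref{lemmDPP}: if $\gamma$ is concave and $d(p):=\gamma(p)-\delta\gamma(\phi(p))\ge 0$ on $J$, then $\sigma_*$ is optimal. Both conditions are verified without any appeal to $V_\delta$. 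In the case $|\Omega^-|=1$ this is done by observing that $\gamma$ and $\gamma\circ\phi$ are \emph{constant on each line parallel to $\calF$}, so the whole question collapses to the single line $(C,m)$, where the two-state analysis applies.

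Second, and more substantively, in the case $|\Omega^-|=2$ you only analyze initial beliefs in the upper triangle $J_0=(C^+,B^+,C)$, where the greedy splitting indeed uses $q_J^*=C$ and the orbit $\{\phi^n(C)\}$ stays in $J_0$. But the theorem asserts optimality for \emph{every} initial $p_1$, in particular for $p_1$ in the lower triangle $(B^+,B,C)$. There the greedy splitting sends $p$ to $B^+$ and to a point $q_J$ on the open segment $(B,C)$; the next belief $\phi(q_J)$ lands in the interior of $J$, and the orbit is no longer a single sequence $\{\phi^n(C)\}$. Your reduction ``along a line through $C$ the subproblem is isomorphic to a two-state game'' also fails off $J_0$: $\phi$ preserves only the line $(C,m)$, not arbitrary lines through $C$ or through $B^+$, so Theorem~\ref{th1} cannot be invoked there. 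The paper handles the lower triangle by constructing a finite simplicial decomposition $J_0,J_1,\ldots,J_K$ (via points $O_k\in[B,C]$ with $\phi([O_k,O_{k+1}])\subset J_{k-1}$), proving $\gamma$ affine on each $J_k$, and then establishing concavity of $\gamma$ on $J_k\cup J_{k+1}$ and $d\ge 0$ near each vertex $O_{k+1}$ by induction on $k$. This inductive structure, and in particular the delicate verification that $d(p_\ep)\ge 0$ for $p_\ep$ near $O_{k+1}$ in $J_k$, is the substantive part of the proof and is entirely absent from your outline.
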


When instead $m$ belongs to the triangle $(B,B^+,C)$, the greedy strategy may fail to be optimal
only when three conditions are met simultaneously:
(i) the advisor
is very patient, that is, $\delta$ is close to one;
(ii) the state is very
persistent, that is, $\lambda$ is close to one; and
(iii) the line segment $\calF$ is
close to parallel to the line $(B,C)$, that is, $r(B)$ and $r(C)$ do not differ by
much. While the first two conditions are natural, we have
no intuition to offer for the last condition.

\section{Preparations}

\subsection{The greedy strategy}\label{sec-greedy}

In this section we provide more details and results on the greedy strategy $\sigma_*$. We let $\calE$ be the set of extreme points of $\calF$.
One can verify that for each $\omega^-\in \Omega^-$ and $\omega^+\in \Omega^+$,
the line segment $[\omega^-,\omega^+]$ contains a unique point in $\calE$ .
Conversely, any $e\in \calE$ lies on a line segment
$[\omega^-,\omega^+]$ for some  $\omega^-\in \Omega^-$, $\omega^+\in \Omega^+$.

It is convenient to reformulate the optimization problem \textbf{G2} in Definition \ref{def_greedy} as a linear program.
Given a finite set $A\subset \dR^\Omega$  we denote%
\footnote{Elements of  $\textrm{cone}(\Omega)$ are best seen as ``sub''-probability measures.} by $\mbox{cone}(A)$ the closed convex hull of $A\cup \{0\}$.
The optimization program in \textbf{G2} is equivalent to the following linear program
\[ (LP):\ \ \max \pi_1(\Omega), \]where the maximum is over
pairs $(\pi_1,\pi_2)\in \mbox{cone}(\calE)\times\mbox{cone}(\Omega^-)$ such that $\pi_1+\pi_2=p$.

\begin{lemma}
The value of the program (LP) is equal to the value of the following problem (LP').
\[ (LP'): \ \max \pi(\Omega),\] where the supremum is over all $\pi \in \mbox{cone}(\Omega)$
such that $\pi\leq p$ and $\displaystyle\sum_{\omega \in \Omega} \pi(\omega)r(\omega) \geq 0$.
\end{lemma}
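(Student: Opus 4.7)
The plan is to prove equality by establishing the two inequalities separately. Note first that $\cone(\calE) \subset \{\pi \in \cone(\Omega) : \langle \pi, r\rangle = 0\}$ since $\calE \subset \calF$. Given any $(\pi_1,\pi_2)$ feasible for (LP), the candidate $\pi := \pi_1$ is feasible for (LP'): indeed $\pi \geq 0$, $\pi = p - \pi_2 \leq p$, and $\langle \pi, r\rangle = 0 \geq 0$, while the objective $\pi_1(\Omega)$ is preserved. This yields value$(LP) \leq $ value$(LP')$.

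For the reverse inequality, given any $\pi$ feasible for (LP'), I plan to upgrade it to a feasible pair of (LP) with at least as good an objective, in two moves. First, saturate on $\Omega^+$: define $\pi'$ to coincide with $p$ on $\Omega^+$ and with $\pi$ on $\Omega^-$. Since mass is added only on states with $r(\omega) \geq 0$, one has $\pi \leq \pi' \leq p$, $\langle \pi', r\rangle \geq \langle \pi, r\rangle \geq 0$, and $\pi'(\Omega) \geq \pi(\Omega)$. Second, find $\eta \geq 0$ supported on $\Omega^-$ with $\eta \leq p - \pi'$ such that $\langle \pi' + \eta, r\rangle = 0$, and set $\pi_1 := \pi' + \eta$, $\pi_2 := p - \pi_1$.

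The substantive point, and in my view the only real obstacle, is the existence of such an $\eta$. Rewrite the target as $\sum_{\omega \in \Omega^-}\eta(\omega)|r(\omega)| = \langle \pi', r\rangle$. As $\eta$ ranges over admissible measures, the left-hand side sweeps continuously from $0$ up to $\sum_{\omega \in \Omega^-}(p - \pi')(\omega)|r(\omega)| = \langle \pi', r\rangle - \langle p, r\rangle$. Since (LP) is the reformulation of program G2, we may assume $p \notin I$, hence $\langle p, r\rangle \leq 0$; this upper endpoint is therefore at least $\langle \pi', r\rangle$, and an admissible $\eta$ exists.

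With such an $\eta$ in hand, the remaining checks are routine. The measure $\pi_1$ is nonnegative with $\langle \pi_1, r\rangle = 0$, and any such measure, once normalized, lies in $\calF$ and therefore in the convex hull of $\calE$; consequently $\pi_1 \in \cone(\calE)$. Next $\pi_2 \geq 0$ because $\eta \leq p - \pi'$, and $\pi_2$ vanishes on $\Omega^+$ thanks to the saturation step, so $\pi_2 \in \cone(\Omega^-)$. Finally, the objective satisfies $\pi_1(\Omega) = \pi'(\Omega) + \eta(\Omega) \geq \pi'(\Omega) \geq \pi(\Omega)$, which yields value$(LP) \geq $ value$(LP')$ and completes the proof.
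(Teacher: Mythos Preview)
Your proof is correct and follows essentially the same approach as the paper's: both directions rely on the same two moves (saturate on $\Omega^+$, then add mass on $\Omega^-$ until $\langle\cdot,r\rangle=0$). The only cosmetic difference is that the paper starts from an \emph{optimal} $\pi$ for (LP') and argues by contradiction that it must already be saturated on $\Omega^+$ and satisfy $\langle\pi,r\rangle=0$, whereas you start from an arbitrary feasible $\pi$ and carry out the two improvements explicitly via the intermediate-value argument.
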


\begin{proof} Recall that in \textbf{G2} $p\notin I$.
If $(\pi_1,\pi_2) \in \cone(\calE) \times \cone(\Omega^-)$ is an optimal solution of (LP)
then $\pi_1$ is a feasible solution of (LP'),
and therefore the value of (LP') is at least the value of (LP).

Fix now an optimal solution $\pi$ of (LP').
If $\sum_{\omega \in \Omega} \pi(\omega) r(\omega) > 0$,
then by increasing the weight of states in $\Omega^-$ we can increase $\pi(\Omega)$,
which would contradict the fact that $\pi$ is an optimal solution of (LP').
The weight of some states in $\Omega^-$ can be increased because $\pi \leq p$ and $\langle p,r\rangle < 0$.
It follows that $\pi \in \cone(\calE)$.
Set $\pi':=p-\pi\in \mbox{cone}(\Omega)$.
It is readily checked that $\pi'(\Omega^+)=0$, for otherwise the corresponding probability could be transferred to $\pi$. Hence $\pi'\in \mbox{cone}(\Omega^+)$
and $(\pi,\pi')$ is a feasible solution of (LP).
This implies that the value of (LP) is at least the value of (LP').
\end{proof}

\bigskip

This reformulation allows for a straightforward description of the greedy strategy at $p\in J$.
Intuitively, the weight $p(\omega^-)$ of each $\omega^-\in \Omega^-$ should be ``allocated'' between $\calF$ and $\Delta(\Omega^-)$
so as to maximize the total weight assigned to $\calF$. Since $\calF$ is defined by the equality $\langle \pi,r\rangle=0$,
it is optimal to allocate to $\calF$ the states $\omega^-$ in which the payoff $r(\omega^-)$ is the least negative.

Formally, we order the elements of $\Omega^-$ into $\omega_1,\ldots, \omega_{|\Omega^-|}$ by decreasing payoff:
$0>r(\omega_1)\geq \cdots \geq r(\omega_{|\Omega^-|})$.
Next, we define linear maps $L_1,\ldots, L_{|\Omega^-|}$ over $\Delta(\Omega)$ by
\[L_k(p):=\displaystyle \sum_{\omega\in \Omega^+}p(\omega)r(\omega) +\sum_{i\leq k} p(\omega_i)r(\omega_i).\]
$L_k(p)$ is a linear combination of the payoff of all states
whose payoff is positive or whose index is at most $k$;
that is, this linear combination assumes only states which are ``better'' than state $k$.

Observe that $L_1(\cdot)\geq \cdots \geq L_{|\Omega^-|}(\cdot)$.
We set $k^*:=\inf\{k: L_k(p)\leq 0\}$
to be the minimal index for which the linear combination $L_k(p)$ is nonpositive.
With these notations the optimal solution $\pi^*$ of $(LP')$ is given by

\begin{itemize}
\item $\pi^*(\omega^+)=p(\omega^+)$ for $\omega^+\in \Omega^+$;
\item $\pi^*(\omega_i)=p(\omega_i)$ for $i<k_*$;
\item $\pi^*(\omega_i)=0$  for $i>k_*$;
\item $\pi^*(\omega_{k^*})=-\frac{L_{k^*-1}(p)}{r(\omega_{k^*})}$.
\end{itemize}

The vector $\pi^*$ is the unique solution of (LP') as soon as
 no two states in $\Omega^-$ yield the same payoff.
If different states yield the same negative payoff, the ordering of $\Omega^-$ is nonunique.
 To sum up, we have proven the lemma below.

\begin{lemma}
\label{lemma:optimal solution}
Assume that no two states in $\Omega^-$ yield the same payoff: $r(\omega)\neq r(\omega')$ for
every $\omega\neq \omega'\in \Omega^-$. Then the greedy splitting $p=a_Iq_I+a_Jq_J$ is uniquely defined at each $p\in J$.
In addition, $q_I\in \calF$ and $q_J\in \Delta(\Omega^-)$.
\end{lemma}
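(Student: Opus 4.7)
The plan is to derive the lemma directly from the explicit water-filling description of the optimum $\pi^*$ of (LP') laid out just before the statement, combined with the equivalence between (LP) and (LP') already established. Under that equivalence, the greedy splitting at $p \in J$ is encoded by the pair $(\pi^*, p-\pi^*) \in \cone(\calE) \times \cone(\Omega^-)$: namely $a_I = \pi^*(\Omega)$, $q_I = \pi^*/a_I$, $a_J = 1 - a_I$, $q_J = (p-\pi^*)/a_J$. Uniqueness of the splitting thus reduces to uniqueness of $\pi^*$ as an optimum of (LP').

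Under the distinct-payoffs hypothesis, the ordering $\omega_1, \ldots, \omega_{|\Omega^-|}$ by strictly decreasing payoff is unambiguous, so the linear forms $L_k$ and the threshold $k^* := \min\{k : L_k(p) \leq 0\}$ are well defined. Existence of $k^*$ comes from $L_{|\Omega^-|}(p) = \langle p,r\rangle < 0$ (since $p \in J$); feasibility of $\pi^*$ — namely $\pi^*(\omega_{k^*}) \in [0, p(\omega_{k^*})]$ and $\langle \pi^*, r\rangle = 0$ — is immediate from the minimality of $k^*$. The crucial step is uniqueness, which I would establish by a standard exchange argument. Any optimal $\pi$ must first saturate $\pi(\omega^+) = p(\omega^+)$ on every $\omega^+ \in \Omega^+$, for otherwise one could strictly increase $\pi(\Omega)$ while preserving $\langle \pi, r\rangle \geq 0$. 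Second, if the allocation on $\Omega^-$ ever violated the ``least-negative first'' order, there would be indices $i > j$ with $\pi(\omega_i) > 0$ and $\pi(\omega_j) < p(\omega_j)$; shifting $\varepsilon$ of mass off $\omega_i$ onto $\omega_j$ in the ratio $|r(\omega_i)|/|r(\omega_j)|$ preserves feasibility and leaves $\langle \pi, r\rangle$ unchanged, yet strictly increases $\pi(\Omega)$ because $|r(\omega_i)| > |r(\omega_j)|$. This strict inequality is precisely the place where the distinct-payoffs assumption is decisive.

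The structural conclusions follow at once: since $\langle \pi^*, r\rangle = 0$ by construction, normalising yields $q_I \in \calF$; and since $p - \pi^*$ vanishes on $\Omega^+$, normalising yields $q_J \in \Delta(\Omega^-)$. The degenerate case $\pi^*(\Omega) = 0$ arises exactly when $p(\Omega^+) = 0$ — then $k^* = 1$, $\pi^*(\omega_1) = 0$, no disclosure is optimal, the unique greedy splitting is $\mu_p$, and $q_J = p \in \Delta(\Omega^-)$. The main delicacy I expect is in formalising the exchange argument: one must check that repeated two-coordinate shifts of the above type eventually drive any optimal $\pi$ to the prescribed $\pi^*$ without any simultaneous off-ordering compensations sustaining strict optimality, and that once the ordering on $\Omega^-$ is respected the equality constraint $\langle \pi, r\rangle = 0$ pins down the boundary coefficient $\pi(\omega_{k^*})$ to its water-filling value.
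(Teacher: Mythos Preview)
Your proposal is correct and follows essentially the same route as the paper: both derive the lemma from the explicit water-filling description of the optimum of (LP') together with the (LP)/(LP') equivalence, and both read off $q_I\in\calF$ and $q_J\in\Delta(\Omega^-)$ by normalising $\pi^*$ and $p-\pi^*$. Your exchange argument for uniqueness is a welcome elaboration of a point the paper simply asserts; note that the argument does not require iterated shifts---a single improving shift already contradicts optimality, so any optimal $\pi$ must already have the threshold structure, after which the binding constraint $\langle\pi,r\rangle=0$ pins down $\pi(\omega_{k^*})$.
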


The distributions $q_I$ and $q_J$ are obtained by renormalizing $\pi^*$ and $p-\pi^*$, respectively.
Note that for $p\in \Delta(\Omega^-)$ one has $a_I=0$, so that formally speaking, $q_I$ is indeterminate. Yet, the solution to (LP) is unique.

For $k\in \{1,\ldots, |\Omega^-|\}$, we let $\bar\calO(k):=\{p\in J\colon L_{k-1}(p)\geq 0\geq L_k(p)\}$ (with $L_0=1$).
The following figure depicts the sets $\bar\calO(1)$ and $\bar\calO(2)$ when $\Omega = \{\omega^+,\omega_1,\omega_2\}$,
$r(\omega^+) = 2$, $r(\omega_1) = -1$ and $r(\omega_2) = -4$.

\centerline{\includegraphics{figure.3}}
\centerline{Figure 3: The sets $\bar\calO(1)$ and $\bar\calO(2)$.}

A useful consequence of the solution of Problem (LP') is that
each set $\bar\calO(k)$ is stable under the greedy splitting.

\begin{lemma}
\label{lemma:stable}
If $p \in \bar\calO(k)$
and if $p = a_Iq_I+a_Jq_J$ is the greedy splitting at $p$,
then $q_I$ and $q_J$ are in $\bar\calO(k)$.
\end{lemma}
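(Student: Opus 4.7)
The plan is to work directly with the explicit description of the optimal solution $\pi^*$ of (LP') recorded before Lemma~\ref{lemma:optimal solution}. For $p\in\bar\calO(k)$, by definition $k^*=k$, so
\[
\pi^*(\omega^+)=p(\omega^+)\ \forall\omega^+\in\Omega^+,\quad \pi^*(\omega_i)=p(\omega_i)\ \forall i<k,\quad \pi^*(\omega_k)=-\tfrac{L_{k-1}(p)}{r(\omega_k)},\quad \pi^*(\omega_i)=0\ \forall i>k.
\]
The greedy splitting is $q_I=\pi^*/a_I$ (with $a_I=\pi^*(\Omega)$) and $q_J=(p-\pi^*)/a_J$, and we interpret ``$\bar\calO(k)$'' as the closure $\{p\in\Delta(\Omega):L_{k-1}(p)\geq0\geq L_k(p)\}$, which absorbs the boundary case $q_I\in\calF$.

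The key step is to evaluate $L_{k-1}$ and $L_k$ at $q_I$ and at $q_J$ using linearity, exploiting the fact that the two supports of $\pi^*$ and $p-\pi^*$ overlap only at $\omega_k$. For $q_I$, since $\pi^*$ agrees with $p$ on $\Omega^+$ and on $\{\omega_1,\dots,\omega_{k-1}\}$, we get $L_{k-1}(\pi^*)=L_{k-1}(p)$, hence $L_{k-1}(q_I)=L_{k-1}(p)/a_I\geq 0$. Moreover,
\[
L_k(\pi^*)=L_{k-1}(\pi^*)+\pi^*(\omega_k)r(\omega_k)=L_{k-1}(p)+\Bigl(-\tfrac{L_{k-1}(p)}{r(\omega_k)}\Bigr)r(\omega_k)=0,
\]
so $L_k(q_I)=0$, which simultaneously confirms the expected fact that $q_I\in\calF$. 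For $q_J$, the support of $p-\pi^*$ is contained in $\{\omega_k,\ldots,\omega_{|\Omega^-|}\}$, so no positive-payoff state and no state with index $<k$ contributes: $L_{k-1}(q_J)=0$. Finally, $L_k(q_J)=L_{k-1}(q_J)+q_J(\omega_k)r(\omega_k)=q_J(\omega_k)r(\omega_k)\leq0$ since $r(\omega_k)<0$ and $q_J(\omega_k)\geq0$. Thus both $q_I$ and $q_J$ satisfy the two inequalities defining $\bar\calO(k)$.

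The only subtlety worth flagging is the boundary behavior: $q_I$ lies on $\calF$ rather than strictly in $J$, but it satisfies $L_{k-1}(q_I)\geq0$ and $L_k(q_I)=0$, so it belongs to the closed set $\bar\calO(k)$ as indicated by the overline; for $q_J\in\Delta(\Omega^-)\subset J$ there is no issue. This computation is essentially routine once the explicit form of $\pi^*$ is on the table, so no genuine obstacle arises — the role of the lemma is to package an algebraic identity that will be used later to iterate the greedy splitting within a fixed region of linearity.
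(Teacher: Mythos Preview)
Your proof is correct and follows essentially the same approach as the paper: both use the explicit form of $\pi^*$ to compute $L_{k-1}$ and $L_k$ at the normalized $\pi^*$ and $p-\pi^*$, obtaining $L_{k-1}(q_I)\geq 0$, $L_k(q_I)=0$, $L_{k-1}(q_J)=0$, $L_k(q_J)\leq 0$. Your write-up is in fact slightly more careful than the paper's (you note $L_{k-1}(\pi^*)=L_{k-1}(p)\geq 0$ rather than strictly positive, and you explicitly flag the boundary issue that $q_I\in\calF$).
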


\begin{proof}
Fix $p \in \bar\calO(k)$.
Then the optimal solution $\pi^*$ to (LP') satisfies
\begin{eqnarray}
&&\pi^*(\omega^+) = p(\omega^+), \ \ \ \omega^+ \in \Omega^+,\\
&&\pi^*(\omega_i) = p(\omega_i), \ \ \ 1 \leq i \leq k-1,\\
&& 0 \leq \pi^*(\omega_k) \leq p(\omega_k).
\end{eqnarray}
By Lemma \ref{lemma:optimal solution},
$q_I$ is the normalization of $\pi^*$.
However, $L_{k-1}(\pi^*) > 0$ and $L_k(\pi^*) = 0$,
so that $L_{k-1}(q_I) > 0$ and $L_k(q_I) = 0$,
and therefore $q_I \in \bar\calO(k)$.

By Lemma \ref{lemma:optimal solution},
$q_J$ is the normalization of $p-\pi^*$.
This implies that $q_J(\omega^+) = 0$ for every $\omega^+ \in \Omega^+$
and $q_J(\omega_i) = 0$ for every $1 \leq i \leq k-1$,
so that $L_{k-1}(q_J) = 0$ and $L_k(q_J) \leq 0$,
and therefore $q_J \in \bar\calO(k)$.
\end{proof}

\subsection{Preparatory results}

For later use we collect in this section a number of simple,
yet general and useful observations. None of the results here uses the specific structure of the Markov chain.

For $p\in \Delta(\Omega)$ we let $\widehat r(p):=\max_{\mu\in \calS(p)}\mu(q\in I)$ be the highest stage payoff of the advisor when the investor's belief is $p$. Notice that $\widehat r$ coincides with the value function $V_0$ with null discount factor, so as an immediate corollary of lemma \ref{lemm1} we obtain:

\begin{lemma}\label{lemm-conc}
The map $\widehat r$ is concave.
\end{lemma}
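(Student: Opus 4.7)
The plan is to treat this as an immediate consequence of Lemma \ref{lemm1}, together with the observation made just before the statement that $\widehat r$ coincides with the value function $V_0$ when the discount factor equals zero. The right-hand side of the dynamic programming equation (\ref{equ1}) at $\delta=0$ reduces to $\max_{\mu\in\calS(p)}\mu(q\in I)$, which is the definition of $\widehat r(p)$, so $V_0=\widehat r$. Since the concavity proof given for Lemma \ref{lemm1} made no use of the specific value of $\delta$, it applies at $\delta=0$ and yields the claim.

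For completeness I would also sketch the self-contained direct argument, which is essentially the compound-lottery step from the proof of Lemma \ref{lemm1}. Given $p=a'p'+a''p''$ with $a',a''\geq 0$ and $a'+a''=1$, pick $\mu'\in\calS(p')$ and $\mu''\in\calS(p'')$ achieving the maximum in the definition of $\widehat r(p')$ and $\widehat r(p'')$ respectively, and form the mixture $\mu:=a'\mu'+a''\mu''$. Since $\mu'$ has mean $p'$ and $\mu''$ has mean $p''$, the mean of $\mu$ is $a'p'+a''p''=p$, so $\mu\in\calS(p)$. Moreover,
\[
\mu(q\in I)=a'\mu'(q\in I)+a''\mu''(q\in I)=a'\widehat r(p')+a''\widehat r(p''),
\]
hence $\widehat r(p)\geq a'\widehat r(p')+a''\widehat r(p'')$, which is concavity.

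There is no real obstacle here; the only thing to be careful about is to state the two ingredients cleanly (the identification $\widehat r=V_0$ and the fact that $\calS(p)$ is closed under taking convex combinations of splittings at two points whose convex combination is $p$). Given the brevity, I would probably present just the two-line appeal to Lemma \ref{lemm1} as the proof, with the mixture construction mentioned as the underlying reason.
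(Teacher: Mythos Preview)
Your proposal is correct and matches the paper's own proof exactly: the paper simply observes that $\widehat r$ coincides with $V_0$ and invokes Lemma~\ref{lemm1}. Your additional self-contained mixture argument is a faithful unpacking of that lemma's logic at $\delta=0$, but is not needed beyond what the paper gives.
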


%\begin{proof}
%Let $p=a'p' +a''p''$, where $ p,p',p''\in \Delta(\Omega)$, $a',a''\geq 0$, and $a'+a''=1$.
%Let $\mu'\in \calS(p')$ and $\mu''\in \calS(p'')$ be arbitrary. Then $\mu:=a'\mu'+a''\mu''$ belongs to $\calS(p)$, and
%$\mu(q\in I)=a'\mu'(q\in I) +a''\mu''(q\in I)$. Since $\mu'$ and $\mu''$ are arbitrary, this implies
%\begin{eqnarray}
%\widehat r(p) &=& \max_{\mu \in \calS(p)}\mu(q\in I)\\
%&\geq& a'\max_{\mu \in\calS(p')}\mu(q\in I) +a''\max_{\mu\in\calS(p'')}\mu(q\in I)\\
%&=& a'\widehat r(p') + a''\widehat r(p''),
%\end{eqnarray}
%and the desired result follows.
%\end{proof}

\bigskip

Lemma \ref{lemm-conc} has the following noteworthy implication.
Fix $n \geq 1$ and let $\bar p_n:=\phi^{(n-1)}(p_1)$ be the (unconditional) distribution of the state in round $n$.
Then $\E_\sigma[p_n]=\bar p_n$ for every strategy $\sigma$ of the advisor.
In particular, by concavity of the function $\widehat r$ and Jensen's inequality,
the expected payoff of the advisor in round $n$ cannot exceed $\widehat r(\bar p_n)$, so that
\[\gamma_*(p_1):=(1-\delta) \sum_{n=1}^\infty \delta^{n-1} \widehat r(\bar p_n)\]
is an upper bound on the total discounted payoff to the advisor.
\bigskip

Fix $\delta<1$. We denote by $\gamma(p)$ the payoff induced by the greedy strategy as a function of the initial belief $p$. We also set
\[d(p):=\gamma(p)- \delta \gamma(\phi(p)).\]
For $p\in J$, the quantity $d(p)$ is the payoff difference when playing greedy, compared to disclosing
\emph{no} information in the first round and  then switching to the greedy strategy in round 2.

If the greedy strategy is optimal for \emph{all} initial distributions, then $\gamma$ coincides with $V_\delta$,
and therefore $\gamma$ is concave
and $d(\cdot)\geq 0$ over $\Delta(\Omega)$. Somewhat surprisingly, the converse implication also holds.

\begin{lemma}\label{lemmDPP}
Assume that $\gamma$ is concave and that $d \geq 0$ over $J$. Then $\sigma_*$ is optimal for all initial distributions.
\end{lemma}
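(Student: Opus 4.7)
The plan is to verify that $\gamma$ is a super-solution of the Bellman equation (\ref{equ1}): for every $p\in \Delta(\Omega)$ and every $\mu \in \calS(p)$,
\[
\gamma(p) \geq (1-\delta)\,\mu(q\in I) + \delta\,\E_\mu[\gamma(\phi(q))].
\]
Denoting by $T$ the right-hand side of (\ref{equ1}) viewed as an operator on bounded functions, $T$ is monotone and a $\delta$-contraction whose unique fixed point is $V_\delta$. Hence $\gamma \geq T\gamma$ iterates into $\gamma \geq T^n\gamma$ for every $n$, and passing to the limit yields $\gamma \geq V_\delta$. The reverse inequality $\gamma \leq V_\delta$ is automatic because $\gamma$ is the payoff of the admissible strategy $\sigma_*$. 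Hence $\gamma = V_\delta$, and $\sigma_*$ is optimal.

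\textbf{Easy cases of the super-solution inequality.} If $p\in I$, then \textbf{G1} gives $\gamma(p) = (1-\delta) + \delta\gamma(\phi(p))$, and the inequality reduces to $\mu(q\in I)\leq 1$ together with $\E_\mu[\gamma(\phi(q))]\leq \gamma(\phi(p))$, the latter following from the concavity of $\gamma$, the affinity of $\phi$, and Jensen's inequality. If $p\in J$ and $\mu(q\in I) = 0$, Jensen gives $\E_\mu[\gamma(\phi(q))]\leq \gamma(\phi(p))$, and the hypothesis $d(p)\geq 0$ supplies $\delta\gamma(\phi(p)) \leq \gamma(p)$.

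\textbf{The key case.} Assume $p\in J$ and $a := \mu(q\in I) \in (0,1)$. Setting $q_I' := \E_\mu[q \mid q\in I]$ and $q_J' := \E_\mu[q \mid q\in J]$, the convexity of $I$ and of $J$ yields $q_I'\in I$, $q_J'\in J$, and $p = a\,q_I' + (1-a)\,q_J'$. Applying Jensen on each conditional, then the identity $\gamma(q_I') = (1-\delta) + \delta\gamma(\phi(q_I'))$ (valid because $q_I'\in I$), and finally the hypothesis $d(q_J')\geq 0$ (valid because $q_J'\in J$), one obtains
\[
(1-\delta)\,\mu(q\in I) + \delta\,\E_\mu[\gamma(\phi(q))] \leq a\,\gamma(q_I') + (1-a)\,\gamma(q_J').
\]
A last application of the concavity of $\gamma$ at $p = a\,q_I' + (1-a)\,q_J'$ bounds the right-hand side by $\gamma(p)$, completing the verification.

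\textbf{Main obstacle.} The delicate point is chaining the two hypotheses in the right order: concavity of $\gamma$ alone handles the $I$-branch, where the current payoff is already maximal and only the continuation value has to be controlled; by contrast, on the $J$-branch it is the inequality $d\geq 0$ that is needed in order to convert the discounted continuation value $\delta\gamma(\phi(q_J'))$ into the non-discounted quantity $\gamma(q_J')$, after which a single use of concavity at $p$ collapses the two branches back onto $\gamma(p)$.
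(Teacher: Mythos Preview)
Your proof is correct and takes essentially the same approach as the paper: verify that $\gamma$ dominates the right-hand side of the Bellman equation using $d\geq 0$ on $J$ together with the concavity of $\gamma$, then conclude $\gamma=V_\delta$. The paper's presentation is marginally more streamlined---it establishes the pointwise bound $(1-\delta)\mathbf{1}_{\{q\in I\}}+\delta\gamma(\phi(q))\leq \gamma(q)$ for all $q$ in one line, integrates against $\mu$, and applies Jensen once (rather than conditioning on $\{q\in I\}$ versus $\{q\in J\}$), and it concludes by noting that $\gamma$ is then a fixed point of the DPE rather than iterating a super-solution inequality---but the substance is identical.
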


\begin{proof}
It suffices to show that $\gamma(\cdot)$ solves the dynamic programming equation, that is,
\[\gamma(p)=\max_{\mu\in \calS(p)}\left\{ (1-\delta) \mu(q\in I)+\delta \E _\mu\left[ (\gamma\circ\phi)(q)\right]\right\}.\]
Denoting by $\mu^*_p$ the greedy splitting at $p$, we have
\[ \gamma(p) = (1-\delta)\mu^*_p(q \in I) + \delta \E_{\mu^*_p}[\gamma\circ\phi(p)] \]
and therefore
\[\gamma(p)\leq\max_{\mu\in \calS(p)}\left\{ (1-\delta) \mu(q\in I)+\delta \E _\mu\left[ (\gamma\circ\phi)(q)\right]\right\}.\]
We now show the reverse inequality.
Let $\mu\in \calS(p)$ be arbitrary.
Because $d(\cdot) \geq 0$ on $J$, one has for each $q\in \Delta(\Omega)$,
\[(1-\delta)1_{\{q\in I\}} +\delta (\gamma\circ\phi(q))\leq \gamma(q).\]
Taking expectations w.r.t.~$\mu$ and using the concavity of $\gamma$, one gets
\[(1-\delta) \mu(q\in I)+\delta \E_\mu\left[ (\gamma\circ \phi)(q)\right] \leq \E_\mu\left[ \gamma(q)\right]\leq \gamma(\E_\mu[q]) = \gamma(p).\]
 This concludes the proof.
\end{proof}

\section{The 2-state case: proof of Theorem \ref{th1}}\label{sec-twostates}

We here assume that $\Omega=\{\omega^-,\omega^+\}$ is a two-point set.
W.l.o.g. we assume that $r(\omega^+)>0>r(\omega^-)$,
and we identify a belief over $\Omega$ with the probability assigned to state $\omega^+$.
Here, the investor is willing to invest as soon as the probability assigned to $\omega^+$ is high enough,
and the investment region is the interval $I=[p_*,1]$, where $p_*\in (0,1)$ solves $p_* r(\omega^+) +(1-p_*)r(\omega^-)=0$.

At any $p<p_*$, $\sigma_*$ chooses the distribution $\mu\in \calS(p)$ which assigns probabilities $\displaystyle \frac{p}{p_*}$
and $\displaystyle 1-\frac{p}{p_*}$ to $p_*$ and $ 0$, respectively, and does not disclose information if $p\geq p_*$. In particular,
\begin{equation}\label{eq1}\gamma(p)=\frac{p}{p_*} \gamma(p_*) +\left(1-\frac{p}{p_*} \right)\gamma(0)\mbox{ for } p\in [0,p_*],\end{equation}
and
\[\gamma(p)=(1-\delta)+\delta (\gamma\circ \phi)(p) \mbox{ for } p\in [p_*,1].\]
Eq.~(\ref{eq1}) shows that $\gamma(\cdot)$ is affine over $[0,p_*]$ (but need not be affine on $[p_*,1]$). Note that
$\gamma(0)=\delta (\gamma\circ \phi)(0)$.

In this setup,%
\footnote{This observation does not generalize to $|\Omega|\geq 3$.}
concavity of $\gamma(\cdot)$ alone is equivalent to the optimality of $\sigma_*$.
Indeed, assume $\gamma(\cdot)$ is concave.
Recall that $d(p)=0$ for $p\in I$.
In addition, $\gamma(\cdot)$ is affine on $[0,p_*]$,
and since $\phi$ is affine and $\gamma$ is concave, the composition $(\gamma\circ \phi)(\cdot)$ is concave on $[0,p_*]$.
Thus, $d(\cdot)$ is convex on $[0,p_*]$. Observe now that $d(0)=d(p_*)=0$,
hence $d(\cdot)\geq 0$ on $\Delta(\Omega)$, and the optimality of $\sigma_*$ then follows from Lemma  \ref{lemmDPP}.

\bigskip

It is left to prove that $\gamma$ is concave.
The invariant measure $m$ assigns probability $\displaystyle \frac{\pi(\omega^+\mid \omega^-)}{\pi(\omega^+\mid \omega^-)+\pi(\omega^-\mid \omega^+)}$
to $\omega^+$. With our notations, for $q\in [0,1](=\Delta(\Omega)$) one has
\[\phi(q)=m+(1-\pi(\omega^+\mid \omega^-)- \pi(\omega^-\mid \omega^+))(q-m),\]
hence $\phi$ is a homothety on $[0,1]$ centered at $m$ with ratio $\lambda :=1-\pi(\omega^+\mid \omega^-)- \pi(\omega^-\mid \omega^+)\in (-1,1)$.

It is convenient to organize the proof below according to the relative values of $p_*$ and $m$, and to the sign of the ratio $\lambda$.
In the first case we provide a direct argument.
In the following cases we prove the concavity of $\gamma$.

%In all cases but one, we prove that $\gamma(\cdot)$ is concave. In the last one -- which we discuss first--, a more direct and transparent argument is available.

\noindent{\bf Case 1: } $p^*\geq m$ and  $\lambda \geq 0$.

 \unitlength 0,7mm
\begin{picture}(200,20)
\put(0,10){\line(1,0){200}}
 \put(0,0){$0$}
  \put(0,9){\line(0,1){2}}
   \put(200,0){$1$}
  \put(200,9){\line(0,1){2}}
   \put(90,0){$m$}
  \put(90,9){\line(0,1){2}}
     \put(140,0){$p^*$}
  \put(140,9){\line(0,1){2}}
       \put(117,0){$\phi(p^*)$}
  \put(120,9){\line(0,1){2}}
\end{picture}

Establishing directly the concavity of $\gamma(\cdot)$ is
possible, yet involved, as $\gamma(\cdot)$ fails to be affine on
$I$. We instead argue that $\gamma(p)=\gamma_*(p)$ for each $p$,
where $\gamma_*(p)$ is the upper bound on payoffs identified
earlier.

Assume first that $p_1\in [0,p_*]$.
Since the interval $[0,p_*]$ is stable under $\phi$
under $\sigma_*$, one has $q_n\in
\{0,p_*\}$ for each $n\geq 1$, and $p_n\in \{\phi(0),
\phi(p_*)\}$ for each $n>1$.
In each stage $n\geq 1$,
conditional on the previous history, the strategy $\sigma_*$
maximizes the expected payoff in stage $n$, so that the expected
payoff in stage $n$ is given by $\E_{\sigma_*}\left[ \widehat
r(p_n) \right]$. Since $\widehat r$ is affine on $[0,p_*]$, the expected payoff in stage $n$ is
also equal to $\widehat r\left( \E_{\sigma_*}\left[ p_n
\right]\right)=\widehat r (\bar p_n)$, so that
$\gamma(p_1)=\gamma_*(p_1)$.

Assume now that $p_1\in I$. Then the sequence $(\bar p_n)_{n\geq
1}$ is decreasing (towards $m$). Let $n_*:=\inf\{ n\geq1: \hat
p_n<p_*\}$ be the  stage in which the unconditional distribution
of the state leaves $I$. Under $\sigma_*$, the advisor discloses
no information up to stage $n_*$, so that $r(q_n)=1=\widehat
r(\bar p_n)$ for all $n<n_*$. That is, $\sigma_*$ achieves
the upper bound on  the payoff in \emph{each} stage $n<n_*$, and,
by the previous argument, in each stage $n\geq n_*$ as well.

\vspace{1cm}

\noindent{\bf Case 2: }  $p^*\leq m$ and  $\lambda\geq 0$.

 \unitlength 0,7mm
\begin{picture}(200,20)
\put(0,10){\line(1,0){200}}
 \put(0,0){$0$}
  \put(0,9){\line(0,1){2}}
   \put(200,0){$1$}
  \put(200,9){\line(0,1){2}}
   \put(90,0){$m$}
  \put(90,9){\line(0,1){2}}
     \put(20,0){$p^*$}
  \put(20,9){\line(0,1){2}}
       \put(58,0){$\phi(p^*)$}
  \put(60,9){\line(0,1){2}}

         \put(7,0){$q^*$}
  \put(7,9){\line(0,1){2}}
\end{picture}

Since $m\geq p_*$, one has $\phi([p_*,1])\subseteq [p_*,1]$: the investment region is stable under $\phi$.
Thus, once in $I$, $\sigma_*$ yields a payoff of 1 in each stage: $\gamma(p)=1$ for $p\geq p_*$. Using (\ref{eq1}), one thus has
\[\gamma(p)=\frac{p}{p_*}+\left(1-\frac{p}{p_*}\right) \gamma(0)\mbox{ for }p>p_*.\]
Since $\gamma(0)<1$ it follows that $\gamma$ is increasing (and affine) on $[0,p_*]$. Hence the concavity of $\gamma$ on $[0,1]$.

\vspace{1cm}

\noindent{\bf Case 3: }  $p^*\geq m$ and  $\lambda\leq 0$.

 \unitlength 0,7mm
\begin{picture}(200,20)
\put(0,10){\line(1,0){200}}
 \put(0,0){$0$}
  \put(0,9){\line(0,1){2}}
   \put(200,0){$1$}
  \put(200,9){\line(0,1){2}}
   \put(90,0){$m$}
  \put(90,9){\line(0,1){2}}
     \put(140,0){$p^*$}
  \put(140,9){\line(0,1){2}}
       \put(58,0){$\phi(p^*)$}
  \put(60,9){\line(0,1){2}}
\end{picture}

Recall that $\gamma$ is affine on $[0,p_*]$. From the formula $\phi(p)=m+\lambda(p-m)$,
one has $\phi(p)\leq \phi(p_*)\leq p_*$ for all $p\geq p_*$, that is, $I$ is mapped into $[0,p_*]$ under $\phi$.
Since
\begin{equation}\label{eq2}
\gamma(p)=(1-\delta)+\delta (\gamma\circ\phi)(p)\mbox{ for }p\in I,
\end{equation}
this implies that $\gamma$ is also affine on $[p_*,1]$. To establish the concavity of $\gamma$
we need to compare the slopes of $\gamma$ on $I$ and $J = [0,p^*)$.
Differentiating (\ref{eq2}) yields $\gamma'(p)=\delta \lambda (\gamma'\circ \phi)(p)$ for $p>p_*$,
hence the two slopes are of opposite signs. Note finally that $\gamma(p_*)=(1-\delta)+\delta (\gamma\circ \phi)(p_*)$,
hence $\gamma(p_*)>(\gamma\circ \phi)(p_*)$, so that $\gamma$ is increasing on $[0,p_*]$ (and then decreasing on $[p_*,1]$).

\vspace{1cm}

\noindent{\bf Case 4: } $p^*\leq m$ and  $\lambda\leq 0$.

 \unitlength 0,7mm
\begin{picture}(200,20)
\put(0,10){\line(1,0){200}}
 \put(0,0){$0$}
  \put(0,9){\line(0,1){2}}
   \put(200,0){$1$}
  \put(200,9){\line(0,1){2}}
   \put(90,0){$m$}
  \put(90,9){\line(0,1){2}}
     \put(20,0){$p^*$}
  \put(20,9){\line(0,1){2}}
       \put(180,0){$q^*$}
  \put(180,9){\line(0,1){2}}

       \put(148,0){$\phi(p^*)$}
  \put(150,9){\line(0,1){2}}
\end{picture}

The dynamics of the belief under $\sigma_*$ is here slightly more
complex. If $\phi(1)\geq p_*$, the investment region $I$ is stable
under $\phi$, hence $\gamma(p)=1$ for all $p\in I$, and the
concavity of $\gamma$ follows as in Case 2. If instead
$\phi(1)<p_*$, we introduce the cutoff $q_*\in [m,1]$ defined by
$\phi(q_*)=p_*$. Since $\phi$ is contracting, the length of the
interval $[\phi(q_*),\phi(p_*)]$ is smaller than that of
$[p_*,q_*]$, which implies that the interval $[p_*,q_*]$ is stable
under $\phi$. Therefore, $\gamma(p)=1$ for all $p\in [p_*,q_*]$.
As in Case 2,  this implies that $\gamma$ is increasing (and
affine) on $[0,p_*]$.

For $p\geq q_*$, $\gamma(p)=(1-\delta)+(\gamma\circ\phi)(p)$. Since
the interval $[q_*,1]$ is mapped into $[0,p_*]$ under $\phi$, this
implies in turn that $\gamma$ is affine on $[q_*,1]$, with slope
given by $\gamma'(p)=\lambda \delta (\gamma'\circ \phi)(p)<0$. That
is, $\gamma$ is piecewise affine, increasing on $[0,p_*]$,
constant on $[p_*,q_*]$ and decreasing on $[q_*,1]$.

\section{A counterexample: proof of Proposition \ref{prop_counter}}\label{sec_counter}

We here provide an example in which $\sigma_*$ fails to be optimal for some initial distribution $p_1$.

There are three states, $\Omega = \{\omega_1,\omega_2,\omega_3\}$,
and the investment region is the triangle with vertices $\omega_1$, $\ep \omega_1+(1-\ep)\omega_2$,
and $\frac{1}{2}\omega_1+\frac{1}{2}\omega_3$ where $\ep > 0$ is sufficiently small (see Figure 1).
Assume first that the invariant distribution is $m=\omega_2$, and that $\lambda=\frac{1}{2}$. Let the initial belief be $p_1 = 2\ep\omega_1+(1-2\ep)\omega_3$.

\centerline{\includegraphics{figure.1}}

\centerline{Figure 4: The counterexample.}

\bigskip

According to $\sigma_*$, at the first stage $p_1$ is split between $\omega_3$ (with probability $1-4\ep$) and $\frac{1}{2}\omega_1+\frac{1}{2}\omega_3$
(with probability $4\ep$).
Because the line segment $[\omega_2,\omega_3]$ is contained in $J$ and $m=\omega_2$, the payoff to the investor once the belief reaches $\omega_3$ is 0.
It follows that the payoff under $\sigma_*$ is $\gamma(p_1) = 4\ep\gamma(\frac{1}{2}\omega_1+\frac{1}{2}\omega_3) \leq 4\ep$.

Consider the alternative strategy, in which the advisor discloses no information in the first stage, so that
\[p_2=\phi(p_1)=\tfrac{1}{2}\omega_2+\tfrac{1}{2}p_1 = \ep\omega_1+\tfrac{1}{2}\omega_2+(\tfrac{1}{2}-\ep)\omega_3,\]
and then at the second stage splits $p_2$ between $q_2=\ep\omega_1+(1-\ep)\omega_2$ (with probability $\frac{1}{2(1-\ep)}$)
and $q_2=\ep\omega_1+(1-\ep)\omega_3$ (with the complementary probability).
The expected  payoff in the second stage is therefore $\displaystyle \tfrac{1}{2(1-\ep)}$.

Hence, the alternative strategy is better than $\sigma_*$ as soon as $\displaystyle 4\ep < \tfrac{1}{2(1-\ep)}\times \delta(1-\delta)$.
For fixed $\delta\in (0,1)$, this is the case for small $\ep$.

In this example, the invariant distribution $m$ is on the boundary of $\Delta(\Omega)$. However, for fixed $\delta$, the above argument
is robust to a perturbation of the transition probabilities.
In particular we obtain a similar result for an invariant distribution $m$ that is in the interior of $\Delta(\Omega)$.

The example shows that the greedy strategy is not always optimal. Then,
a natural question is   whether there is always an optimal strategy that satisfies the following property:
whenever the strategy provides information to the investor, it does so according to the greedy splitting.
The answer is negative, and we end this section by showing that the optimal strategy in this
example sometimes splits the investor's belief in a way that is
not the greedy splitting.

Assume then to the contrary that in this example there is an optimal strategy $\sigma$ that,
at every belief $p$, either does not provide information or reveals information according to the greedy splitting.
Consider the line segment
$[\omega^*,\omega^3]$. If there is a belief $p$ on this line
segment for which the greedy splitting is optimal, then by Lemma
\ref{lemm18} below the greedy splitting is optimal for every belief on
this line segment, which contradicts the fact that the greedy
splitting is not optimal at
$2\varepsilon\omega_1+(1-2\varepsilon)\omega_3$.
Thus, $\sigma$ does not provide information for any $p$ on this line segment.
In particular,
\[ V_\delta(p) = (1-\delta)\textbf{1}_{\{p \in \calF\}} + \delta V_\delta(\phi(p)). \]
Whereas the functions $V_\delta$ and $V_\delta \circ \phi$ are continuous,
the function $\textbf{1}_{\{p \in \calF\}}$ is not continuous on the line segment $[\omega^*,\omega^3]$, a contradiction.

\section{Invariant initial distributions: proof of Theorem \ref{th2}}

We will prove a strengthened version of Theorem \ref{th2}, which will be used in the proof of Theorem \ref{th3}.
We recall from Section \ref{sec-greedy} that  $L_k(\cdot)$ is the linear map defined by
\[L_k(p)=\sum_{\omega\in \Omega^+}p(\omega)r(\omega) +\sum_{i\leq k} r(\omega_i)p(\omega_i).\]
Note that, with the notations of Section \ref{sec-greedy}, the map $p\mapsto \pi^*_1(p)$ is affine on the set
$\bar\calO(k)=\{L_k(\cdot)\geq 0\geq L_{k+1}(\cdot)\}$ and, therefore, so is $\widehat r(\cdot)$.

\begin{theorem}\label{th2bis}
Let $k$ be such that
$m\in \bar \calO(k)$. Then $\gamma(p_1)=\gamma_*(p_1)$ for every initial distribution $p_1\in \bar\calO(k)$.
In particular, the greedy strategy $\sigma_*$ is optimal whenever $p_1\in \bar\calO(k)$.
\end{theorem}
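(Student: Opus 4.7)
The plan is to show that, starting from any $p_1 \in \bar\calO(k)$, the greedy strategy matches the stage-by-stage upper bound $\gamma_*(p_1) = (1-\delta)\sum_{n=1}^\infty \delta^{n-1} \widehat r(\bar p_n)$ from Section~3.2. Since this quantity already dominates the payoff of \emph{every} strategy, once we prove $\gamma(p_1)=\gamma_*(p_1)$ we obtain both the claimed equality and the optimality of $\sigma_*$. The two ingredients I will combine are: (a) the affinity of $\widehat r$ on $\bar\calO(k)$, which is recalled just before the statement, and (b) the fact that under $\sigma_*$, the belief process stays in $\bar\calO(k)$ almost surely.

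First I would establish the invariance claim (b). By Lemma~\ref{lemma:stable}, the two posteriors $q_I,q_J$ produced by the greedy splitting at any $p\in\bar\calO(k)\cap J$ both lie in $\bar\calO(k)$; at $p\in\bar\calO(k)\cap I$ no splitting occurs and the posterior coincides with $p$. So it suffices to check that $\bar\calO(k)$ is stable under the drift $\phi$. This is where the hypothesis $m\in\bar\calO(k)$ enters: since $\phi(q) = (1-\lambda)m + \lambda q$ with $\lambda\in[0,1)$, and $\bar\calO(k)$ is convex (it is cut out by linear inequalities) and contains $m$, any $q\in\bar\calO(k)$ is mapped to the convex combination $\phi(q)\in\bar\calO(k)$. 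Combining these two observations by induction, the sequences $(p_n)_n$ and $(q_n)_n$ generated by $\sigma_*$ remain in $\bar\calO(k)$ at every stage, almost surely. The same argument shows that the unconditional distributions $\bar p_n = \phi^{(n-1)}(p_1)$ also lie in $\bar\calO(k)$.

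Next I invoke affinity. By definition, the greedy strategy achieves the maximum stage payoff, so conditional on the past, the expected payoff in round $n$ equals $\widehat r(p_n)$. Taking unconditional expectations and using $\E_{\sigma_*}[p_n]=\bar p_n$ together with the affinity of $\widehat r$ on $\bar\calO(k)$,
\[
\E_{\sigma_*}\!\bigl[\widehat r(p_n)\bigr] \;=\; \widehat r\bigl(\E_{\sigma_*}[p_n]\bigr) \;=\; \widehat r(\bar p_n),
\]
since equality in Jensen's inequality holds for affine functions on a region containing the support of $p_n$. Summing over $n$,
\[
\gamma(p_1) \;=\; (1-\delta)\sum_{n\geq 1}\delta^{n-1}\E_{\sigma_*}\!\bigl[\widehat r(p_n)\bigr] \;=\; (1-\delta)\sum_{n\geq 1}\delta^{n-1}\widehat r(\bar p_n) \;=\; \gamma_*(p_1),
\]
and $\sigma_*$ is optimal because $\gamma_*(p_1)$ upper bounds the payoff of every strategy.

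The main obstacle I anticipate is checking the stability of $\bar\calO(k)$ under $\phi$; all the rest is bookkeeping around affinity and the upper bound from Section~3.2. Crucially, the argument uses $\lambda\geq 0$ (so $\phi(q)$ is a genuine convex combination of $m$ and $q$), matching the paper's restriction to homotheties with nonnegative ratio.
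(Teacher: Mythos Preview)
Your proof is correct and follows essentially the same approach as the paper: establish that $\bar\calO(k)$ is invariant under $\sigma_*$ (via Lemma~\ref{lemma:stable} plus stability under $\phi$), then use affinity of $\widehat r$ on $\bar\calO(k)$ to turn the expectation $\E_{\sigma_*}[\widehat r(p_n)]$ into $\widehat r(\bar p_n)$ and match the upper bound $\gamma_*(p_1)$. The only difference is that you make explicit the reason $\bar\calO(k)$ is $\phi$-stable (convexity plus $\phi(q)=(1-\lambda)m+\lambda q$ with $\lambda\ge 0$), which the paper states in one sentence without elaboration.
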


\bigskip

\begin{proof}
Let $p\in \bar\calO(k)$ be arbitrary, and denote by $p=a_Iq_I+a_Jq_J$ the greedy splitting at $p$.
Lemma \ref{lemma:stable} implies that both $q_I$ and $q_J$ belong to $\bar\calO(k)$.
Since $m\in \bar\calO(k)$, the set $\bar\calO(k)$ is stable under the greedy strategy $\sigma_*$.
That is, if $p_1 \in \bar\calO(k)$ then under $\sigma^*$ we have $p_n \in \bar\calO(k)$ for every $n$.

Since $\widehat r$ is affine on $\bar\calO(k)$, one has for each stage $n$,
\[\E_{\sigma_*}\left[ \widehat r(p_n)\right]=\widehat r\left( \E_{\sigma_*}\left[ p_n\right]\right)= \widehat r(\bar p_n).\]
Hence the result.
\end{proof}

\section{Eventually greedy strategies: proof of Theorem \ref{th3}}

We will assume that $m\in J$, which is the more difficult case. The case where $m\in I$ is dealt with at the end of the proof.
We start with an additional, simple, observation on the shape of the value function.

\begin{lemma}\label{lemm18}
Let $p\in J$ be given, and let $p=a_Iq_I+a_Jq_J$ be an \emph{optimal} splitting at $p$. If $a_I,a_J>0$, then
\begin{enumerate}
\item $V_\delta$ is affine on $[q_I,q_J]$;
\item at each $p'\in [q_I,q_J], $ it is optimal to split between $q_I$ and $q_J$.
\end{enumerate}
\end{lemma}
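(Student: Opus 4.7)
The plan is to exploit the concavity of $V_\delta$ together with the dynamic programming equation in a tightly coupled way. Writing $\mu$ for the optimal splitting described in the statement, optimality at $p$ means
\[
V_\delta(p) = (1-\delta)\bigl[a_I \mathbf{1}_{\{q_I\in I\}} + a_J \mathbf{1}_{\{q_J\in I\}}\bigr] + \delta\bigl[a_I V_\delta(\phi(q_I)) + a_J V_\delta(\phi(q_J))\bigr].
\]
The first step is to note two separate lower bounds on the right-hand side. By applying the no-disclosure choice $\mu_{q}$ in the DP equation (\ref{equ1}) at each of $q = q_I, q_J$, one gets
\[
V_\delta(q) \;\geq\; (1-\delta)\mathbf{1}_{\{q\in I\}} + \delta V_\delta(\phi(q)),
\]
which yields $a_I V_\delta(q_I) + a_J V_\delta(q_J) \geq V_\delta(p)$. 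On the other hand, by concavity of $V_\delta$ (Lemma \ref{lemm1}) and $p = a_I q_I + a_J q_J$, one also has $V_\delta(p) \geq a_I V_\delta(q_I) + a_J V_\delta(q_J)$. Thus both inequalities are equalities.

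The second step extracts what these equalities force. Since $a_I,a_J>0$ and each summand is individually bounded below, the equality in the ``no-disclosure'' bounds must hold termwise:
\[
V_\delta(q_I) = (1-\delta)\mathbf{1}_{\{q_I\in I\}} + \delta V_\delta(\phi(q_I)),\qquad V_\delta(q_J) = (1-\delta)\mathbf{1}_{\{q_J\in I\}} + \delta V_\delta(\phi(q_J)).
\]
Equality in Jensen's inequality for the concave function $V_\delta$ at a strict convex combination of $q_I$ and $q_J$ is a standard fact and forces $V_\delta$ to be affine on the whole segment $[q_I,q_J]$, proving part (1).

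For part (2), take any $p' \in [q_I, q_J]$, say $p' = b_I q_I + b_J q_J$ with $b_I + b_J = 1$, $b_I,b_J \geq 0$. Consider the two-point splitting $\nu \in \calS(p')$ supported on $\{q_I, q_J\}$ with these weights. Its value in (\ref{equ1}) is
\[
(1-\delta)\bigl[b_I \mathbf{1}_{\{q_I\in I\}} + b_J \mathbf{1}_{\{q_J\in I\}}\bigr] + \delta\bigl[b_I V_\delta(\phi(q_I)) + b_J V_\delta(\phi(q_J))\bigr],
\]
which, using the two termwise equalities from step two, equals $b_I V_\delta(q_I) + b_J V_\delta(q_J)$. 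By the affinity of $V_\delta$ on $[q_I,q_J]$ established in part (1), this in turn equals $V_\delta(p')$. Hence $\nu$ attains the value at $p'$ and is therefore optimal, proving part (2).

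The only subtle step is the affinity claim on the whole segment; I would briefly justify it by noting that for a concave function $V$ on a convex domain, if $V(\lambda x + (1-\lambda)y) = \lambda V(x) + (1-\lambda)V(y)$ for some $\lambda \in (0,1)$, then for any $\lambda' \in [0,1]$ one sandwiches $V(\lambda' x + (1-\lambda')y)$ between two concavity estimates that must also be tight. Beyond this, the argument is a direct unpacking of the optimality equation, so I do not anticipate further obstacles.
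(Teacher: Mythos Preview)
Your proof is correct and follows essentially the same approach as the paper's: both hinge on the equality $V_\delta(p)=a_IV_\delta(q_I)+a_JV_\delta(q_J)$ together with concavity to get affinity on $[q_I,q_J]$, and then read off part (2) from that affinity. Your write-up is more explicit than the paper's in that you spell out why this equality holds (sandwiching between the no-disclosure lower bound and Jensen) and you extract the termwise identities $V_\delta(q_i)=(1-\delta)\mathbf 1_{\{q_i\in I\}}+\delta V_\delta(\phi(q_i))$, which lets you verify directly that the two-point splitting attains the dynamic-programming maximum at $p'$; the paper instead appeals to the compound-lottery argument from Lemma~\ref{lemm1} for part (2).
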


We stress that $p=a_Iq_I+a_Jq_J$ need not be the greedy splitting at $p$.

\begin{proof}
By assumption, $V_\delta(p)=a_IV_\delta(q_I)+a_JV_\delta(q_J)$, hence the first statement follows from the concavity of $V_\delta$ on $[q_I,q_J]$. Given a point $p'=a'_Iq_I+a'_Jq_J\in [q_I,q_J]$,
this affine property implies
\[V_\delta(p')=a'_IV_\delta(q_I)+a'_JV_\delta(q_J).\]
On the other hand, splitting $p'$ into $q_I$ and $q_J$ yields $a'_IV_\delta(q_I)+a'_JV_\delta(q_J)$, hence the second
statement. \end{proof}

\bigskip

In the sequel, we let $k$ be such that $m\in \bar\calO(k)$. By Theorem \ref{th2bis}, $\gamma(p)=\gamma_*(p)$ for every $p\in \bar\calO(k)$. We denote by $\bar J:=J\cup \calF=\{p\in \Delta(\Omega), \langle p,r\rangle \leq 0\}$ the closure of $J$.

\begin{lemma}\label{lemm19}
Let $p\in \bar J\setminus \bar\calO(k)$ be given, and let $p=a_Iq_I+a_Jq_J$ be an optimal splitting at $p$.
Then $[q_I,q_J]\cap \bar\calO(k)=\emptyset$.
\end{lemma}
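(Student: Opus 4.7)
The plan is to argue by contradiction. Suppose $p^{\star}\in[q_I,q_J]\cap\bar\calO(k)$ exists, and write $p^{\star}=a\,q_I+(1-a)\,q_J$. I choose $p^{\star}$ in the relative interior of this (convex) intersection, so that $a\in(0,1)$ and $p^{\star}\in J$ strictly; the endpoint cases $p^{\star}=q_I$ or $p^{\star}=q_J$ are handled separately and will be the delicate ones.

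By Lemma~\ref{lemm18}(2), splitting $p^{\star}$ between $q_I$ and $q_J$ with weights $a$ and $1-a$ is itself optimal at $p^{\star}$, so
\[ V_\delta(p^{\star}) = (1-\delta)\,a + \delta\bigl[a\,V_\delta(\phi(q_I)) + (1-a)\,V_\delta(\phi(q_J))\bigr]. \]
I combine this with two facts. First, $V_\delta\leq\gamma_*$ pointwise. Second, $\gamma_*$ is concave on $\Delta(\Omega)$: each term $\widehat r\circ\phi^{n-1}$ in the definition of $\gamma_*$ is concave (because $\widehat r$ is concave by Lemma~\ref{lemm-conc} and $\phi^{n-1}$ is affine), so $\gamma_*$ is concave as a $\delta$-weighted sum. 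Using these, together with the affineness of $\phi$, the right-hand side above is at most $(1-\delta)\,a+\delta\,\gamma_*(\phi(p^{\star}))$. On the other hand, since $p^{\star}\in\bar\calO(k)$, Theorem~\ref{th2bis} gives $V_\delta(p^{\star})=\gamma_*(p^{\star})=(1-\delta)\widehat r(p^{\star})+\delta\,\gamma_*(\phi(p^{\star}))$. Comparing forces $\widehat r(p^{\star})\leq a$, and since trivially $a\leq\widehat r(p^{\star})$ (the splitting achieves current payoff exactly $a$ and cannot exceed the maximum), this yields $a=\widehat r(p^{\star})$.

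Consequently $(a\,q_I,(1-a)\,q_J)$ is an optimal solution of (LP$'$) at $p^{\star}\in\bar\calO(k)$. The explicit description of such an optimizer from the discussion preceding Lemma~\ref{lemma:stable} (and Lemma~\ref{lemma:optimal solution} for uniqueness when payoffs on $\Omega^-$ are pairwise distinct) forces $q_I\in\calF\cap\bar\calO(k)$ and $q_J\in\Delta(\Omega^-)\cap\bar\calO(k)$. By convexity of $\bar\calO(k)$ the whole segment $[q_I,q_J]$ then lies in $\bar\calO(k)$; in particular $p=a_Iq_I+a_Jq_J\in\bar\calO(k)$, contradicting the hypothesis $p\in\bar J\setminus\bar\calO(k)$.

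The main obstacle is the identification in the last step: showing that any max-current-payoff splitting at a point of $\bar\calO(k)$ has both branches in $\bar\calO(k)$. When payoffs on $\Omega^-$ are pairwise distinct, Lemma~\ref{lemma:optimal solution} makes $(q_I,q_J)$ the unique greedy splitting at $p^{\star}$ and Lemma~\ref{lemma:stable} supplies the conclusion. In the presence of ties, or when the intersection $[q_I,q_J]\cap\bar\calO(k)$ degenerates to a single endpoint (so one must take $a=0$ or $a=1$), a finer LP/boundary analysis is needed, which I expect to be the only technical hurdle.
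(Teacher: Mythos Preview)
Your argument is essentially the paper's: both proceed by contradiction, pick $p'\in[q_I,q_J]\cap\bar\calO(k)$, use Lemma~\ref{lemm18} to see that splitting $p'$ into $q_I,q_J$ is itself optimal, invoke $V_\delta(p')=\gamma_*(p')$ from Theorem~\ref{th2bis} to force this splitting to achieve first-best current payoff (your explicit inequality chain yielding $a=\widehat r(p^\star)$ is just a more detailed rendering of the paper's one-line claim that ``the expected payoff in each stage is equal to the first best''), and then apply Lemma~\ref{lemma:stable} plus convexity to conclude $q_I,q_J\in\bar\calO(k)$ and hence $p\in\bar\calO(k)$. The tie and endpoint caveats you flag are not addressed in the paper either; note in particular that the case $p^\star=q_I$ cannot occur, since $q_I\in I$ while $\bar\calO(k)\subseteq J$.
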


\begin{proof}
We argue by contradiction and assume that there exists $p'\in \bar\calO(k)\cap [q_I,q_J]$.
By Lemma \ref{lemm18}, the splitting $p'=a'_Iq_I+a'_Jq_J$ is optimal at $p'$.
Since $p'\in \bar\calO(k)$, one has $V_\delta(p')=\gamma_*(p')$.
This implies that under the optimal strategy,
the expected payoff in each stage is equal to the first best payoff in that stage.
In particular, any optimal splitting at $p'$ must be the greedy one.
By Lemma \ref{lemma:stable} this implies that both $q_I$ and $q_J$ belong to $\bar\calO(k)$,
hence by convexity $p \in \bar\calO(k)$ -- a contradiction.
\end{proof}

\bigskip

We will need to make use of a set $P$ of the same type as $\bar\calO(k)$, which contains $m$ in its interior, and starting from which $\sigma_*$ is optimal.

If $m$ belongs to the interior of $\bar\calO(k)$ for some $k$, we simply set $P:=\bar\calO(k)$. Otherwise, one has
\begin{equation}\label{eq21}L_{k-1}(m)>0=L_k(m)=\cdots=L_l(m)>L_{l+1}(m)\mbox{ for some }k\leq l.\end{equation}
We then set $P:=\{p\in \bar J, L_{k-1}(p)\geq 0\geq L_{l+1}(p)\}=\bar\calO(k-1)\cup\cdots \cup \bar\calO(l)$.
By construction, $m$ belongs to the interior of $P$.
By (\ref{eq21}), one has $m\in \bar\calO(i)$ for $i=k-1,\ldots, l$,
hence the set $P$ is stable under the Markov chain.
This implies that $\sigma_*$ is optimal whenever $p_1\in P$.

\begin{lemma}\label{lemm22}
Assume that all connected components of $\bar J\setminus P$ in $\Delta(\Omega)$ are convex. Then the conclusion of Theorem \ref{th3} holds.
\end{lemma}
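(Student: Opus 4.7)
The plan is to exhibit an optimal strategy $\sigma$ that agrees with $\sigma_*$ on $P$ and to show that the hitting time $\tau:=\inf\{n\geq 1:p_n\in P\}$ is almost surely finite. Define $\sigma$ to play $\sigma_*$ on $P$ (optimal by Theorem~\ref{th2bis}), to disclose no information on $I\setminus P$ (optimal by Corollary~\ref{cor1}), and on $\bar J\setminus P$ to use a measurably selected optimal two-point splitting $p=a_Iq_I+a_Jq_J$ (provided by Corollary~\ref{cor2}). A short interchange argument shows that any such optimal splitting automatically satisfies $q_I\in\calF$: if $q_I\in\mathrm{int}(I)$, the crossing point $q_I^*=[q_I,q_J]\cap\calF$ induces a splitting $(q_I^*,q_J)$ whose stage payoff at $p$ is strictly larger, while its continuation is at least as large by the concavity of $V_\delta$ applied to $\phi(q_I^*)=t\phi(q_I)+(1-t)\phi(q_J)$; this would contradict optimality. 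The dynamic programming equation then confirms $\sigma$ is optimal.

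Next I would prove the analog of Lemma~\ref{lemm19} for $P$: if $p\in\bar J\setminus P$ has an optimal splitting $p=a_Iq_I+a_Jq_J$ with $a_I,a_J>0$, then $[q_I,q_J]\cap P=\emptyset$. The argument mirrors Lemma~\ref{lemm19}: a candidate $p'\in P\cap[q_I,q_J]$ would, by Lemma~\ref{lemm18}, inherit $(q_I,q_J)$ as an optimal splitting; since $V_\delta(p')=\gamma_*(p')$ on $P$ (Theorem~\ref{th2bis}), optimality forces the splitting to be of greedy form, and Lemma~\ref{lemma:stable} places $q_I,q_J$ in a common $\bar\calO(i)\subseteq P$; convexity of $P$ then contradicts $p\notin P$. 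Combined with $q_I\in\calF\subseteq\bar J$ and $q_J\in J\subseteq\bar J$, the segment $[q_I,q_J]$ lies in $\bar J\setminus P$, and by the convexity hypothesis on components, it is contained in a single connected component $C$ of $\bar J\setminus P$. Under $\sigma$, $q_n$ therefore remains in the component of $p_n$, and $p_{n+1}=\phi(q_n)\in\phi(C)$.

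Since $P$ is convex, contains $m$, and is stable under both the greedy splittings (Lemma~\ref{lemma:stable}) and $\phi$, once $p_n\in P$ the trajectory remains in $P$ and $\sigma$ coincides with $\sigma_*$ from that round on. The main obstacle is to prove $\tau<\infty$ almost surely. Although $\bar p_n=\phi^{n-1}(p_1)\to m\in\mathrm{int}(P)$, the conditional variance of $p_n$ under $\sigma$ need not vanish, because splits and $\phi$ alternate rather than compose, so a direct concentration bound is insufficient. The approach I would pursue exploits the finiteness of the family $\{C_1,\dots,C_K\}$ of connected components together with the strict contraction of $\phi$ toward $m\in\mathrm{int}(P)$: for a suitable fixed $N$, one obtains a uniform positive lower bound on $\Pr(p_{n+N}\in P\mid p_n\notin P)$ by combining the confinement of splits to a single component (from the preceding step) with a Doeblin-type minorization based on the iterated contraction of $\phi$; coupled with the absorbing nature of $P$, this yields $\tau<\infty$ almost surely via a conditional Borel--Cantelli argument.
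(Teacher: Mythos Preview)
Your setup is largely in line with the paper's. Constructing an explicit optimal $\sigma$ via measurable selection, showing $q_I\in\calF$ for optimal two-point splittings at $p\in J$, and extending Lemma~\ref{lemm19} from $\bar\calO(k)$ to $P$ are all sound; in fact the $q_I\in\calF$ observation is exactly what makes the paper's unproved claim ``since $m\in J$, under $\tau$ one has $q_n\in\bar J$ a.s.'' true, so you have clarified a point the paper glosses over.

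The genuine gap is in the last paragraph, where you argue that $\tau<\infty$ a.s. Your proposed mechanism --- ``Doeblin-type minorization based on the iterated contraction of $\phi$'' --- does not work, and you even diagnose why: splits and $\phi$ alternate, so the contraction of $\phi^N$ says nothing about where $p_{n+N}$ or $q_{n+N}$ lies. After each application of $\phi$, the advisor may split the belief back out to the far edge of the current component, so there is no minorization of the transition kernel of $(p_n)$ in the usual Doeblin sense. What you are missing is the specific device the paper uses. Fix a component $\calC$ and a hyperplane $H$ separating $\calC$ from $P$; let $h$ be the affine functional with $H=\{h=0\}$ and $h(m)>0$. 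The key is the \emph{mean} identity $\E[q_n]=\bar p_n=\phi^{n-1}(p_1)$, which you already noted: since $\phi^{\bar n}(\Delta(\Omega))\subset\{h\geq c_0\}$ for some $\bar n$ and $c_0>0$, one has $\E[h(q_{\bar n})]\geq c_0$, while $h\leq h_{\max}$ on the simplex and $h<0$ outside $\bar Q$. A one-line Markov-type bound then gives $\Prob(q_{\bar n}\in\bar Q)\geq c_0/h_{\max}$, hence $\Prob(q_{\bar n}\notin\calC)\geq c$, hence $\Prob(\theta\leq\bar n)\geq c$. The same bound applies conditionally on $\theta>j\bar n$ because the argument only uses that $\phi^{\bar n}$ sends \emph{every} point into $\{h\geq c_0\}$, and then your Borel--Cantelli step finishes. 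In short: you dismissed ``a direct concentration bound'' too quickly --- one does not need concentration at all, only the mean constraint together with the linear functional supplied by the separating hyperplane.
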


\begin{proof}
Let $\calC$ be an arbitrary connected component of $\bar J\setminus P$. Since $\calC$ is convex, there is an hyperplane $H$ (in $\Delta(\Omega)$)
that (weakly) separates $\calC$ from $P$, and we denote by $Q$ the open half-space of $\Delta(\Omega)$ that contains $m$.

We will make use of the following observation. Since $\bar Q \cap \Delta(\Omega)$ is compact, there is a constant $c>0$ such that the following holds: for all $\tilde p\in \bar Q\cap \Delta(\Omega)$ and all $\mu\in \calS(\tilde p)$, one has $\mu(q\in \bar Q)\geq c$.

Since $m\in Q$, the distance from $m$ to $H$ is positive.
Since $\Delta(\Omega)$ is compact and $\phi$ is contracting, there exists $\bar n\in \dN$ such that $\phi^{(\bar n)}(p)\in Q$ for all $p\in \Delta(\Omega)$.

Fix $p\in \calC$ and let $\tau$ be any optimal policy when $p_1=p$.
We let $\theta:=\inf\{n\geq 1,q_n\in P\}$ be the stage at which the investor's belief reaches $P$.
We prove below that $\theta <+\infty$ with probability 1 under $\tau$.
This proves the result, since $\theta$ is an upper bound on the actual stage at which the advisor can switch to $\sigma_*$.

Since $m\in J$, under $\tau$ one has $q_n\in \bar J$ with probability 1 for all $n$.
By Lemma \ref{lemm19}, one has $q_n\in\calC$ on the event $n<\theta$. On the other hand, the (unconditional) law of $q_n$ belongs to $\calS(\bar p_n)$ for each $n$: $\E\left[ q_n\right]=\bar p_n$. This implies that $\prob_\tau(q_{\bar n}\in Q)\geq c$, so that $\prob_\tau(\theta\leq \bar n)\geq c$.

The same argument, applied more generally, yields $\prob_\tau(\theta\leq (j+1)\bar n\mid \theta > j\bar n)\geq c$ for all $j\in \dN$.
Therefore, $\prob(\theta <+\infty)=1$, as desired.
\end{proof}

\bigskip

The complement of $P$ in $\bar J$ is the \emph{disjoint} union of $\{p\in J: L_k(p)<0\}$ and $\{p\in J: L_l(p)>0\}$. Both sets are convex, hence Theorem \ref{th3} follows from Lemma \ref{lemm22}.

\bigskip

For completeness, we now provide a proof for the case $m\in I$.
In that case, the entire investment region $I$ is stable under $\sigma_*$.
Hence, it is enough to prove that the stopping time  $\theta:=\inf\{n\geq 1 \colon q_n\in I\}$ is a.s.~finite,
for any initial distribution $p\in J$ and any optimal policy $\tau$.
Observe first that the payoff $\gamma(p)$ under $\sigma_*$ is bounded away from zero and therefore so is $V_\delta(p)\geq \gamma(p)$.
For a fixed $\delta$, this implies the existence of a constant $c>0$ and of  a stage $\bar n\in \dN$, such that $\prob_\tau(\theta \leq \bar n)\geq c$. This implies the result, as in the first part of the proof.

\section{The case of 3 states: proof of Theorem \ref{th4}}

The analysis relies on a detailed study of the belief dynamics under $\sigma_*$.
We will exhibit a simplicial decomposition of $\Delta(\Omega)$ with respect to which $\gamma$ is affine.
This partition will be used to prove that $\gamma(\cdot)$ is concave and $d(\cdot)$ nonnegative on $\Delta(\Omega)$.
We will organize the discussion in two cases, depending on the size of $\Omega^-$.

\bigskip\noindent

\textbf{Case 1}: $\Omega^-=\{C\}$.

We prove the optimality of $\sigma_*$ in two steps. We first argue that $\gamma$ is concave and $d$ nonnegative on the straight line joining $C$ and $m$. We next check that both $\gamma$ and $d$ are constant on each line parallel to $\calF$. These two steps together readily imply that $\gamma$ is concave and $d$ nonnegative throughout $\Delta(\Omega)$, as desired.
\bigskip

\underline{Step 1}. Denote by $\calL$ the line $(C,m)$, and by $p_*$ the intersection of $\calL$ and $\calF$.
The line $\calL$ is stable under $\phi$, and $\sigma_*$ splits any $p\in \calL\cap J$ between $C$ and $p_*$.
The dynamics of beliefs and of payoffs thus follows the same pattern as in the two-state case.
Hence%
\footnote{We emphasize however that this is not sufficient to conclude the optimality of $\sigma_*$ on $\calL$.}
it follows from Section \ref{sec-twostates} that $\gamma$ is concave and $d(\cdot)$ nonnegative on $\calL$.

\definecolor{xdxdff}{rgb}{0.49,0.49,1}
\definecolor{ttqqqq}{rgb}{0.2,0,0}
\definecolor{qqqqff}{rgb}{0,0,1}
\definecolor{uuuuuu}{rgb}{0.27,0.27,0.27}
\begin{tikzpicture}[line cap=round,line join=round,>=triangle 45,x=1.0cm,y=1.0cm]
\clip(-2.42,-2.56) rectangle (16.15,8.11);
%\fill[color=ttqqqq] (0,0) -- (8.13,0) -- (4.07,7.04) -- cycle;
\draw [color=ttqqqq] (0,0)-- (8.13,0);
\draw [color=ttqqqq] (8.13,0)-- (4.07,7.04);
\draw [color=ttqqqq] (4.07,7.04)-- (0,0);
\draw (1.95,2.1) node[anchor=north west] {$I$};
\draw (3.57,0.87) node[anchor=north west] {$J$};
\draw (8.27,0.23) node[anchor=north west] {$C$};
\draw (3.9,2.56) node[anchor=north west] {$\mathcal{F}$};
\draw (1.87,0)-- (6.21,3.33);
\draw [dash pattern=on 2pt off 2pt] (4.09,0)-- (6.89,2.15);
\draw [dotted] (3.56,1.29)-- (8.13,0);
\draw [dotted] (4.44,4.08)-- (8.13,0);
\draw (4,4.69) node[anchor=north west] {$m$};
\draw (5.77,2.23) node[anchor=north west] {$p_{\mathcal{L}}$};
\draw (5.1,0.8) node[anchor=north west] {$p$};
\draw (5.46,3.59) node[anchor=north west] {$p_*$};
\begin{scriptsize}
\draw [fill=uuuuuu] (0,0) circle (1.5pt);
\draw [fill=qqqqff] (8.13,0) circle (1.5pt);
\draw [fill=uuuuuu] (4.07,7.04) circle (1.5pt);
\draw [fill=xdxdff] (1.87,0) circle (1.0pt);
\draw [fill=xdxdff] (6.21,3.33) circle (1.0pt);
\draw [fill=qqqqff] (5.18,0.83) circle (1.0pt);
\draw [fill=qqqqff] (4.44,4.08) circle (1.0pt);
\draw [fill=uuuuuu] (5.57,2.84) circle (1.0pt);
\draw [fill=uuuuuu] (6.48,1.83) circle (1.0pt);
\draw [fill=uuuuuu] (3.56,1.29) circle (1.0pt);
\end{scriptsize}
\end{tikzpicture}
\centerline{Figure 5: The case $|\Omega^-|=1$.}

\bigskip

\underline{Step 2}.
With the notations of Figure 5, $\sigma_*$
splits any $p\in J$ between $C$ and a point in the investment frontier $\calF$, and $\widehat
r(p)=\widehat r(p_\calL)$, where $p_\calL$ is a point for which $(pp_\calL)$ is parallel to
$\calF$.
Note that any line parallel to $\calF$ is mapped by $\phi$
into some line parallel to $\calF$. This implies that $\gamma$ and
$\gamma\circ \phi$ are constant on each line parallel to $\calF$,
and so is $\Delta(\cdot)$.

\bigskip\noindent

Denote by $J_0$ the triangle $(C^+,B^+,C)$.

\textbf{Case 2}: $\Omega^-=\{B,C\}$ and $m\in I\cup J_0$.

 Again, we proceed in several steps. We first prove that $\gamma$ is concave and $d$ nonnegative on $I \cup J_0$. We next explicit the dynamics of beliefs under $\sigma_*$. This in turn leads to the concavity of $\gamma$ in Step 3. In Step 4, we prove that $d\geq 0$ on $\Delta(\Omega)$.

 \bigskip

 \underline{Step 1}. The function $\gamma$ is concave and $d\geq 0$ on $I\cup J_0$.

 The analysis is identical to that in \textbf{Case 1}. First, it follows from the two-state case that the conclusion holds on the line $(C,m)$.
 Next, as before, both $\gamma$ and $\gamma\circ \phi$ are constant on each line segment contained in $I\cup J_0$ and parallel to $\calF$.

 \bigskip

 \underline{Step 2}. The belief dynamics under $\sigma_*$.

 We construct recursively a finite sequence $O_1,\ldots, O_K$ of points in the line segment $[B,C]$ as follows.
 Set first $O_1=C$ and let $k\geq 1$. If $\phi$ maps $B$ into the triangle $(C^+,O_k,O_{k-1})$ (or $J_0$, if $k=1$), we set $K=k$.
 Otherwise, $O_{k+1}$ is the unique point in the line segment $[C,O_k]$ such that $P_{k+1}:=\phi(O_{k+1})\in [C,O_k]$.

 Since $\phi$ is an homothety, all points $(P_k)_{k\leq K}$ lie on some line $\calP$ parallel to $(B,C)$, see Figure 6.

\definecolor{xdxdff}{rgb}{0.49,0.49,1}
\definecolor{ffffff}{rgb}{1,1,1}
\definecolor{ttqqqq}{rgb}{0.2,0,0}
\definecolor{qqqqff}{rgb}{0,0,1}
\definecolor{uuuuuu}{rgb}{0.27,0.27,0.27}
\begin{tikzpicture}[line cap=round,line join=round,>=triangle 45,x=1.2cm,y=1.2cm]
\clip(-2.42,-3.01) rectangle (11.7,8.11);
%\fill[color=ttqqqq] (0,0) -- (8.13,0) -- (4.07,7.04) -- cycle;
%\fill[color=ttqqqq] (1.64,2.84) -- (5.93,3.82) -- (4.07,7.04) -- cycle;
\draw [color=ttqqqq] (0,0)-- (8.13,0);
\draw [color=ttqqqq] (8.13,0)-- (4.07,7.04);
\draw [color=ttqqqq] (4.07,7.04)-- (0,0);
\draw (1.64,2.84)-- (5.93,3.82);
\draw (4.16,5.09) node[anchor=north west] {$I$};
\draw (4.15,7.56) node[anchor=north west] {$A$};
\draw (-0.61,0.13) node[anchor=north west] {$B$};
\draw (0.93,3.3) node[anchor=north west] {$B^+$};
\draw (6.05,4.3) node[anchor=north west] {$C^+$};
\draw (3.69,3.95) node[anchor=north west] {$\mathcal{F}$};
\draw [color=ttqqqq] (1.64,2.84)-- (5.93,3.82);
\draw [color=ttqqqq] (5.93,3.82)-- (4.07,7.04);
\draw [color=ttqqqq] (4.07,7.04)-- (1.64,2.84);
\draw [dash pattern=on 2pt off 2pt] (1.64,2.84)-- (8.13,0);
\draw (5.48,3.13) node[anchor=north west] {$m$};
\draw [dash pattern=on 2pt off 2pt] (1.64,2.84)-- (2.99,0);
\draw [dotted] (1.01,0)-- (5.53,2.44);
\draw [dotted] (5.53,2.44)-- (2.99,0);
\draw [dash pattern=on 2pt off 2pt] (1.64,2.84)-- (4.83,0);
\draw [dotted] (4.83,0)-- (5.53,2.44);
\draw [dotted] (5.53,2.44)-- (6.54,0);
\draw [dash pattern=on 2pt off 2pt] (6.54,0)-- (1.64,2.84);
\draw [line width=0.4pt,dash pattern=on 1pt off 2pt on 5pt off 4pt] (0.49,0.85)-- (7.64,0.85);
\draw (7.86,-0.1) node[anchor=north west] {$O_1=C$};
\draw (3.13,3.13) node[anchor=north west] {$J_0$};
\draw (6.69,0.6) node[anchor=north west] {$J_1$};
\draw (5.2,0.6) node[anchor=north west] {$J_2$};
\draw (3.67,0.65) node[anchor=north west] {$J_3$};
\draw (1.96,0.7) node[anchor=north west] {$J_4$};
\draw (6.4,-0.1) node[anchor=north west] {$O_2$};
\draw (4.67,-0.13) node[anchor=north west] {$O_3$};
\draw (2.77,-0.05) node[anchor=north west] {$O_4$};
\draw (0.8,-0.08) node[anchor=north west] {$O_5$};
\draw [dash pattern=on 2pt off 2pt] (1.64,2.84)-- (1.01,0);
\draw (0.5,0.77) node[anchor=north west] {$J_5$};
\draw (6.1,1.43) node[anchor=north west] {$P_2$};
\draw (4.79,1.46) node[anchor=north west] {$P_3$};
\draw (3.7,1.41) node[anchor=north west] {$P_4$};
\draw (2.49,1.46) node[anchor=north west] {$P_5$};
\begin{scriptsize}
\draw [fill=uuuuuu] (0,0) circle (1.5pt);
\draw [fill=qqqqff] (8.13,0) circle (1.5pt);
\draw [fill=ffffff] (4.07,7.04) circle (1.5pt);
\draw [fill=xdxdff] (1.64,2.84) circle (1.0pt);
\draw [fill=xdxdff] (5.93,3.82) circle (1.0pt);
\draw [fill=qqqqff] (5.53,2.44) circle (1.0pt);
\draw [fill=uuuuuu] (6.19,0.85) circle (1.0pt);
\draw [fill=uuuuuu] (6.54,0) circle (1.0pt);
\draw [fill=uuuuuu] (5.07,0.85) circle (1.0pt);
\draw [fill=uuuuuu] (4.83,0) circle (1.0pt);
\draw [fill=uuuuuu] (3.87,0.85) circle (1.0pt);
\draw [fill=uuuuuu] (2.99,0) circle (1.0pt);
\draw [fill=uuuuuu] (2.58,0.85) circle (1.0pt);
\draw [fill=uuuuuu] (1.01,0) circle (1.0pt);
\end{scriptsize}
\end{tikzpicture}
\centerline{Figure 6: The points $(O_k)_k$ and $(P_k)_k$.}

\bigskip

The next claim states that this algorithm ends in a finite  number of steps.
  \bigskip

\begin{claim}
$K<+\infty$.
\end{claim}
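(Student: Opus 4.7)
My plan is to extract an explicit affine recursion for the position of $O_k$ along $[B,C]$ and then show, by a one-dimensional argument, that it cannot be iterated ad infinitum. Parametrize $[B,C]$ by $t\in[0,1]$ via $O_k=(1-t_k)B+t_kC$, so that $t_1=1$. Working in the barycentric coordinates of $\Delta(\Omega)$, the image line $\calP=\phi([B,C])$ is exactly the set of points of $\Delta(\Omega)$ with $A$-coordinate equal to $m_A(1-\lambda)$. Computing the intersection of the line $(B^+,O_k)$ with $\calP$ and using the identity $P_{k+1}=\phi(O_{k+1})$, I obtain the affine recursion
\[
t_{k+1}=\alpha\,t_k-\eta, \qquad \alpha:=\frac{b^+_A-m_A(1-\lambda)}{b^+_A\,\lambda},\quad \eta:=\frac{m_C(1-\lambda)}{\lambda}>0,
\]
where $b^+_A$ denotes the $A$-coordinate of $B^+$. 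By construction, so long as the algorithm has not terminated at step $k$, $O_{k+1}$ lies strictly between $B$ and $O_k$, hence $0\leq t_{k+1}<t_k\leq 1$.

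Assume for contradiction that $K=+\infty$. Then $(t_k)\subset[0,1]$ is strictly decreasing, hence converges to some $t^*\in[0,1)$, and passing to the limit gives $(1-\alpha)t^*=-\eta<0$. A case analysis on $\alpha$ yields the desired contradiction. If $\alpha\leq 0$, then $t_2=\alpha-\eta<0$, and already the construction of $O_2$ would fail. If $0<\alpha\leq 1$, then $(1-\alpha)t^*\geq 0$, contradicting $(1-\alpha)t^*<0$. Finally, if $\alpha>1$, then $t^*=\eta/(\alpha-1)>0$ and iterating explicitly yields $t_k-t^*=\alpha^{k-1}(1-t^*)$, which either grows without bound (when $t^*<1$, forcing $t_2>1$ and hence termination at $k=1$) or diverges to $-\infty$ (when $t^*\geq 1$, contradicting $t_k\geq 0$). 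In every case, $K=+\infty$ is impossible.

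The main (but minor) obstacle is the barycentric computation that produces $\alpha$ and $\eta$, together with the check that the algorithm's termination condition is equivalent to the affine recursion producing an $O_{k+1}$ outside the interval $(B,O_k)$. Once these are settled, finiteness of $K$ reduces to the elementary fact that an affine map $t\mapsto\alpha t-\eta$ on $[0,1]$ with no nonnegative fixed point (or with a positive fixed point above $1$) cannot be iterated indefinitely while staying in the interval.
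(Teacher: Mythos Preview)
Your proof is correct and takes a genuinely different route from the paper's. The paper introduces a geometric map $f:[B,C]\to(B,C)$ with $f(O_{k+1})=O_k$, and uses Thales' theorem to show that the step size $|O_kO_{k+1}|$ is proportional to a distance $|B^+Z|$; it then argues separately that this step size is bounded below by $|Bf(B)|$ when $m\in I$, and is increasing in $k$ when $m\in J_0$. Either way, only finitely many steps fit in $[B,C]$.

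You instead compute the recursion explicitly in barycentric coordinates, obtaining the affine map $t\mapsto\alpha t-\eta$, and then dispose of all cases by a one-dimensional fixed-point analysis. This is more computational but has two mild advantages: it unifies the cases $m\in I$ and $m\in J_0$ that the paper treats separately, and it avoids the appeal to Thales. Conversely, the paper's argument makes transparent \emph{why} the recursion is affine (all splitting lines share the apex $B^+$, so the projection onto $\calP$ has constant ratio), and gives a direct lower bound on the step size rather than a contradiction argument. Your case $\alpha>1$, $t^*<1$ is in fact vacuous here, since $m\in I\cup J_0$ forces the projection of $m$ from $C$ onto $[A,B]$ to land in $[A,B^+]$, i.e.\ $m_A/(m_A+m_B)\geq b_A^+$, which is exactly $t^*\geq 1$; but handling it does no harm. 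The computations you defer (the formula for $\alpha,\eta$ and the equivalence of termination with $t_{k+1}\notin(0,t_k)$) are indeed routine once one reads the construction correctly from Figure~6.
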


\begin{proofclaim}
We introduce the map $f$ from the line segment $[B,C]$ to the line $(B,C)$ as follows.
Given $X\in [B,C]$, we let $f(X)$ be the intersection of $(B^+,Y)$ with $(B,C)$, where $Y$ is the intersection of $(X,m)$ with $\calP$, see Figure 7.

\definecolor{xdxdff}{rgb}{0.49,0.49,1}
\definecolor{ffffff}{rgb}{1,1,1}
\definecolor{ttqqqq}{rgb}{0.2,0,0}
\definecolor{qqqqff}{rgb}{0,0,1}
\definecolor{uuuuuu}{rgb}{0.27,0.27,0.27}
\begin{tikzpicture}[line cap=round,line join=round,>=triangle 45,x=1.0cm,y=1.0cm]
\clip(-2.42,-3.01) rectangle (11.7,8.11);
%\fill[color=ttqqqq] (0,0) -- (8.13,0) -- (4.07,7.04) -- cycle;
%\fill[color=ttqqqq] (1.58,2.73) -- (5.93,3.82) -- (4.07,7.04) -- cycle;
\draw [color=ttqqqq] (0,0)-- (8.13,0);
\draw [color=ttqqqq] (8.13,0)-- (4.07,7.04);
\draw [color=ttqqqq] (4.07,7.04)-- (0,0);
\draw (1.58,2.73)-- (5.93,3.82);
\draw (4.16,5.09) node[anchor=north west] {$I$};
\draw (5.43,1.97) node[anchor=north west] {$J$};
\draw (4.15,7.56) node[anchor=north west] {$A$};
\draw (-0.61,0.13) node[anchor=north west] {$B$};
\draw (8.27,0.23) node[anchor=north west] {$C$};
\draw (0.93,3.3) node[anchor=north west] {$B^+$};
\draw (6.05,4.3) node[anchor=north west] {$C^+$};
\draw (3.57,3.89) node[anchor=north west] {$\mathcal{F}$};
\draw [color=ttqqqq] (1.58,2.73)-- (5.93,3.82);
\draw [color=ttqqqq] (5.93,3.82)-- (4.07,7.04);
\draw [color=ttqqqq] (4.07,7.04)-- (1.58,2.73);
\draw [dotted] (1.64,0)-- (5.02,3.3);
\draw [dash pattern=on 1pt off 2pt on 4pt off 4pt] (1.58,2.73)-- (6.56,2.73);
\draw [dash pattern=on 1pt off 2pt on 4pt off 4pt] (0.56,0.97)-- (7.58,0.97);
\draw [dotted] (3.2,0)-- (1.58,2.73);
\draw (1.47,0.01) node[anchor=north west] {$X$};
\draw (2.92,0.05) node[anchor=north west] {$f(X)$};
\draw (2.59,1.66) node[anchor=north west] {$Y$};
\draw (4.41,2.76) node[anchor=north west] {$Z$};
\draw (5.1,3.64) node[anchor=north west] {$m$};
\draw (6.38,1.49) node[anchor=north west] {$\mathcal{P}$};
\begin{scriptsize}
\draw [fill=uuuuuu] (0,0) circle (1.5pt);
\draw [fill=qqqqff] (8.13,0) circle (1.5pt);
\draw [fill=ffffff] (4.07,7.04) circle (1.5pt);
\draw [fill=xdxdff] (1.58,2.73) circle (1.0pt);
\draw [fill=xdxdff] (5.93,3.82) circle (1.0pt);
\draw [fill=qqqqff] (5.02,3.3) circle (1.0pt);
\draw [fill=xdxdff] (1.64,0) circle (1.0pt);
\draw [fill=uuuuuu] (2.63,0.97) circle (1.0pt);
\draw [fill=uuuuuu] (3.2,0) circle (1.0pt);
\draw [fill=uuuuuu] (4.43,2.73) circle (1.0pt);
\end{scriptsize}
\end{tikzpicture}
\centerline{Figure 7: The definition of $f$.}
\bigskip

Since $m$ belongs to $I\cup J_0$ and to the relative interior of $\Delta(\Omega)$, $f(X)$ is well-defined and $f(B)$ lies strictly ``to the right'' of $B$.

Observe that (by Thales Theorem), the Euclidian distance $Xf(X)$ is proportional to the distance $B^+Z$.
Hence, as $m$ moves away from $B$ towards $C$, $Xf(X)$ increases if $m\in I$, and decreases if $m\in J_0$.
In the former case, this implies that $O_{k+1}f(O_{k+1})=O_{k+1}O_k \geq Bf(B)$ for each $k$.
In the latter one, this implies that $O_kO_{k+1}$ increases with $k$. In both cases, $K<+\infty$.
\end{proofclaim}

\bigskip

For $k=1,\ldots, K-1$,  denote by $J_k$ the triangle $(B^+,O_k,O_{k+1})$ (see Figure 6),
and observe that $\phi([O_k,O_{k+1}])=[P_k,P_{k+1}]$. The belief dynamics is similar for any initial belief in $J_k$. Any $p_1\in J_k$ is first split between $B^+$ and some $q_1\in [O_k,O_{k+1}]$. In the latter case, $q_1$ is mapped to $p_2:=\phi(q_1)\in [P_k,P_{k+1}]$. The belief $p_2$ is then split between $B^+$ and
$q_2\in [O_{k-1},O_k]$, etc. The (random) belief $p_{k+1}$ in stage $k+1$ lies in $I\cup J_0$.

\bigskip

\underline{Step 3}. The function $\gamma$ is concave on $\Delta(\Omega)$.

We proceed with a series of claims.
\bigskip

\begin{claim}
The function $\gamma$ is affine on $J_k$, for every $k\leq K$.
\end{claim}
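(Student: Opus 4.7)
The plan is to argue by induction on $k$. The base case $k=0$ is already covered by Step 1, which showed that $\gamma$ is affine on all of $I\cup J_0$ (in particular on $J_0$).

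For the inductive step, fix $k\geq 1$ and assume $\gamma$ is affine on $J_{k-1}$. Step 2 describes the greedy splitting at an arbitrary $p\in J_k$: it sends $p$ to $B^+$ with some probability $a_I$ and to a point $q\in[O_k,O_{k+1}]$ with probability $a_J=1-a_I$, after which the next belief lies on $\phi([O_k,O_{k+1}])=[P_k,P_{k+1}]$. Since $B^+\in\calF\subset I$ contributes unit current payoff while $q\in J$ contributes zero, the recursion defining $\gamma$ reads
\[
\gamma(p) \;=\; (1-\delta)\,a_I \;+\; \delta\bigl(a_I\,\gamma(\phi(B^+)) + a_J\,\gamma(\phi(q))\bigr).
\]

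Next, I would express $p$ in barycentric coordinates $(\alpha,\beta,\eta)$ on the triangle $J_k=(B^+,O_k,O_{k+1})$. Then $a_I=\alpha$, $a_J=\beta+\eta$, $q=\tfrac{\beta}{\beta+\eta}O_k+\tfrac{\eta}{\beta+\eta}O_{k+1}$, and, because $\phi$ is affine, $\phi(q)=\tfrac{\beta}{\beta+\eta}P_k+\tfrac{\eta}{\beta+\eta}P_{k+1}$. The inductive hypothesis, combined with the key geometric inclusion $[P_k,P_{k+1}]\subset J_{k-1}$, then yields $a_J\,\gamma(\phi(q))=\beta\,\gamma(P_k)+\eta\,\gamma(P_{k+1})$, and substituting gives
\[
\gamma(p) \;=\; \alpha\bigl[(1-\delta)+\delta\,\gamma(\phi(B^+))\bigr] \;+\; \delta\beta\,\gamma(P_k) \;+\; \delta\eta\,\gamma(P_{k+1}),
\]
which is affine in $(\alpha,\beta,\eta)$, completing the induction.

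The only nonroutine ingredient is the inclusion $[P_k,P_{k+1}]\subset J_{k-1}$, where the construction of the sequence $(O_k)_k$ in Step 2 is essential: both $P_k$ and $P_{k+1}$ lie on the line $\calP$ parallel to $(B,C)$, and the defining property of the $O_j$'s places $P_k$ on $[B^+,O_{k-1}]$ and $P_{k+1}$ on $[B^+,O_k]$, so the segment $[P_k,P_{k+1}]$ sits inside the triangle $J_{k-1}=(B^+,O_{k-1},O_k)$. I expect this geometric verification to be the main (though minor) obstacle; the remainder is a direct unfolding of the stationarity of $\sigma_*$ and the affineness of $\phi$.
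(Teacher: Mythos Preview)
Your approach is essentially the same as the paper's: induction on $k$, with the inductive step expressing $p\in J_k$ in barycentric coordinates on $(B^+,O_k,O_{k+1})$ and using $\phi([O_k,O_{k+1}])=[P_k,P_{k+1}]\subset J_{k-1}$ together with the inductive hypothesis. One small correction on the base case: Step~1 establishes only that $\gamma$ is \emph{concave} on $I\cup J_0$, not affine (and $\gamma$ is typically not affine on $I$); the paper instead handles $k=0$ directly by writing any $p\in J_0$ as $p=xC+(1-x)q$ with $q\in\calF$, so that $\gamma(p)=x\gamma(C)+(1-x)\gamma(\calF)$, which is affine in $p$ since $\gamma$ is constant on $\calF$ and $x=1-\widehat r(p)$ is affine on $J_0$.
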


\begin{proofclaim}
We argue by induction and start with $k=0$. We denote by $\gamma(\calF)$ the constant value of $\gamma$ on $\calF$. Given $p=x C+(1-x)q\in J_0$ with $q\in [B,C]$, one has $\gamma(p)=x\gamma(C)+(1-x)\gamma(\calF)$, hence the affine property. For later use, note also that, as $p$ moves towards $[A,C^+]$ on a line parallel to $[B,C]$, the weight $x$ decreases, hence $\gamma(\cdot)$ is decreasing on such a line.

Assume now that $\gamma$ is affine on $J_{k-1}$ for some $k\geq 1$. For $p\in [O_k,O_{k+1}]$, $\gamma(p)=\delta\gamma\circ \phi(p)$. Since $\phi(p)\in J_{k-1}$, $\gamma$ is affine on $[O_k,O_{k+1}]$. Next, for
$p=x_I B^+ +x_k O_k +x_{k+1}O_{k+1}\in J_k$,
\begin{eqnarray*}
\gamma(p)&=& x_I \gamma(B^+)+(x_k +x_{k+1})\gamma\left(\frac{x_k O_k +x_{k+1}O_{k+1}}{x_k+x_{k+1}}\right)\\
&=&x_I \gamma(B^+) + x_k \gamma(O_k) +x_{k+1} \gamma(O_{k+1}).
\end{eqnarray*}
That is, $\gamma$ is affine on $J_k$.
\end{proofclaim}

\begin{claim}
The function $\gamma$ is concave on $J_k\cup J_{k+1}$ for $k=1,\ldots, K-2$.
\end{claim}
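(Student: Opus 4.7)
\begin{proofclaim}
The plan is to use the belief dynamics to reduce concavity on $J_k\cup J_{k+1}$ to concavity on $J_{k-1}\cup J_k$, and then induct on $k$. Since $\gamma$ is affine on each of $J_k$ and $J_{k+1}$ by the previous claim, and they share the edge $[B^+,O_{k+1}]$, the function $\gamma$ on the union is continuous and piecewise affine; hence its concavity is equivalent to a single scalar inequality at the shared edge, and it suffices to verify this inequality along one transversal segment with one endpoint in each triangle. I take $[O_k,O_{k+2}]\subset [B,C]$, which crosses the shared edge at $O_{k+1}$.

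Write $O_{k+1}=\alpha O_k+(1-\alpha)O_{k+2}$. Each $O_j$ lies in $\Delta(\Omega^-)$, so $\sigma_*$ reveals no information there and $\gamma(O_j)=\delta\gamma(P_j)$ with $P_j:=\phi(O_j)$. Because $\phi$ is an affine homothety, $P_{k+1}=\alpha P_k+(1-\alpha)P_{k+2}$. Substituting these identities, the desired inequality
\[
\gamma(O_{k+1})\geq\alpha\gamma(O_k)+(1-\alpha)\gamma(O_{k+2})
\]
is equivalent, after dividing by $\delta$, to
\[
\gamma(P_{k+1})\geq\alpha\gamma(P_k)+(1-\alpha)\gamma(P_{k+2}).
\]

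By the construction of the sequence $(O_j)$, the point $P_j$ lies on the edge $[B^+,O_{j-1}]$ for every $j\geq 2$, so $[P_k,P_{k+1}]\subset J_{k-1}$, $[P_{k+1},P_{k+2}]\subset J_k$, and the reduced inequality is precisely the concavity of $\gamma$ across the shared edge of $J_{k-1}$ and $J_k$ evaluated along the transversal $[P_k,P_{k+2}]$. This yields the inductive step: concavity on $J_k\cup J_{k+1}$ for $k\geq 2$ follows from concavity on $J_{k-1}\cup J_k$. The main obstacle is the base case $k=1$: since there is no triangle $J_{-1}$, the point $P_1=\phi(C)$ lies in $I\cup J_0$ (using $m\in I\cup J_0$), so concavity along $[P_1,P_3]$ has to be established on $(I\cup J_0)\cup J_1$ across the shared edge $[B^+,C]$. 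I expect to handle this by combining the concavity of $\gamma$ on $I\cup J_0$ obtained in Step 1 (via the two-state analysis on the line $(C,m)$ and the fact that $\gamma$ is constant along lines parallel to $\calF$ within $I\cup J_0$) with the affinity of $\gamma$ on $J_0$ and $J_1$ and a direct slope comparison at $[B^+,C]$ using $\gamma(C)=\delta\gamma(\phi(C))$.
\end{proofclaim}
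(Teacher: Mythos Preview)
Your inductive step is exactly the paper's: pull back the transversal $[O_k,O_{k+2}]$ under $\phi$ to $[P_k,P_{k+2}]\subset J_{k-1}\cup J_k$ and use $\gamma=\delta(\gamma\circ\phi)$ on $[B,C]$ together with the ``two-affine-pieces-agreeing-on-a-line'' observation. So the reduction and the induction are correct and match the paper.

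The genuine gap is the base case. Your sketch for $J_0\cup J_1$ invokes only $\gamma(C)=\delta\gamma(\phi(C))$ and the Step~1 facts about lines parallel to $\calF$; neither yields the needed slope comparison across $[B^+,C]$, which lives on the line $\calP$ (parallel to $[B,C]$, not to $\calF$). The paper's argument is the missing ingredient: for $p\in J_1\cap\calP$, the greedy split $p=(1-x)B^++xq_J$ has $x$ independent of $p$ (since $\calP\parallel[B,C]$), so $\gamma(p)=(1-x)\gamma(B^+)+x\delta\,\gamma(\phi(q_J))$ with $\phi(q_J)\in J_0\cap\calP$; differentiating along $\calP$ gives that the slope of $\gamma$ on $J_1\cap\calP$ equals $\delta\lambda$ times the slope on $J_0\cap\calP$. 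Since the latter is negative and $0\le\delta\lambda<1$, the piecewise-affine function is concave across $p_*$, hence on $J_0\cup J_1$. Once you supply this computation, your induction goes through verbatim.
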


\begin{proofclaim}
We will use the following elementary observation. Let $g_1,g_2:\dR^2\to \dR$ be affine maps. Let $\calL$ be a line in $\dR^2$, such that $g_1=g_2$ on $\calL$. Let
$H_1$ and $H_2$ be the two half-spaces defined by $\calL$, and let $h$ be the map that coincides with $g_i$ on
$H_i$. Assume that for $i=1,2$, there is a point $A_i$ in the relative interior of $H_i$ such that $h$ is concave on $[A_1,A_2]$. Then $h$ is concave%
\footnote{If $g_1=g_2$ everywhere the conclusion holds trivially.
Otherwise, $g_1$ and $g_2$ coincide only on $\calL$, and then $h=\min\{g_1,g_2\}$.}
on $\dR^2$.

We prove the claim by induction.
Pick first
$\tilde p_0\in J_0\cap \calP$ and  $\tilde p_1\in J_1\cap \calP$, and let $p_*$ be the point of intersection of $\calP$ with the line $(B^+,C)$.
Under $\sigma_*$, any point $p\in [\tilde p_1,p_*]$ is split as $p=(1-x)B^++xq_J$, where $q_J\in (B,C)$. Note that $x$ does not depend on $p$, and
\begin{eqnarray*}
\gamma(p)&=& (1-x)\gamma(B^+) +x \gamma \left( \frac{p-(1-x)B^+}{x}\right)\\
&=& (1-x) \gamma(B^+) +x\delta \gamma\circ \phi\left( \frac{p-(1-x)B^+}{x}\right).
\end{eqnarray*}
As $p$ moves from $\tilde p_1$ towards $p_*$, $\displaystyle \phi\left( \frac{p-(1-x)B^+}{x}\right)$ moves from $p_*$ towards $\tilde p_0$. Hence, the derivative of $\gamma$ on $[\tilde p^1,p_*]$ is equal to $\delta \lambda$ times the derivative of $\gamma$ on $[p_*,\tilde p_0]$.\footnote{We are here identifying any point $p=y\tilde p_0+(1-y)\tilde p_1$ of $[\tilde p_1,\tilde p_0]$ with the real number $y$, and we view $\gamma$ as defined over $[0,1]$.} Since the latter derivative is negative, and $\delta\lambda <1$, $\gamma(\cdot)$ is concave on $[\tilde p_1,\tilde p_0]$. The concavity of $\gamma$ on $J_1\cup J_0$ then follows from the preliminary observation.

Assume now that $\gamma$ is concave on $J_k\cup J_{k-1}$ for some $k\geq 1$. For $p\in [O_{k+1},O_{k-1}]$,
we have $\gamma(p)=\delta\gamma\left(\phi(p)\right)$. Since $\phi(p)\in [P_{k+1},P_{k-1}]\subset J_k\cup J_{k-1}$,
the function $\gamma$ is concave on $[O_{k+1},O_k]$ hence by the preliminary observation it is also concave on $J_{k+1}\cup J_k$.
\end{proofclaim}

\begin{claim}
\label{claim:3}
The function $\gamma$ is concave on $J$.
\end{claim}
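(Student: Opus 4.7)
\begin{proofclaim}
The plan is to bootstrap from the two preceding claims: $\gamma$ is affine on each fan triangle $J_k$, and concave on every adjacent pair $J_k\cup J_{k+1}$. I would first recall that $J=J_0\cup J_1\cup\cdots\cup J_{K-1}$ is convex (as an intersection of half-spaces with $\Delta(\Omega)$), and that all the $J_k$'s for $k\geq 1$ form a fan around the common vertex $B^+$, glued to $J_0=(C^+,B^+,C)$ along $[B^+,C]$. Hence any segment $[p,p']\subset J$ meets the interior edges $[B^+,O_k]$ (for $2\leq k\leq K-1$) and $[B^+,C]$ in finitely many points --- in fact in at most one point each, unless the segment is entirely contained in such an edge, in which case concavity of $\gamma$ restricted to the segment is immediate from the previous claims.

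Next, I would parametrize the segment $[p,p']$ and describe the restriction $\gamma|_{[p,p']}$ as a one-dimensional piecewise affine function, whose breakpoints are exactly the crossings with interior edges. The first claim guarantees affine behavior on each sub-segment lying in a single $J_k$. At each breakpoint $r\in[B^+,O_k]$, lying between two sub-segments in the adjacent triangles $J_{k-1}$ and $J_k$, the second claim (concavity of $\gamma$ on $J_{k-1}\cup J_k$) applies directly to the 2-point restriction and forces the left slope of $\gamma|_{[p,p']}$ at $r$ to be at least the right slope. Thus $\gamma|_{[p,p']}$ is piecewise affine with non-increasing slopes, hence concave in one dimension, giving
\[ \gamma(\lambda p+(1-\lambda)p')\geq \lambda\gamma(p)+(1-\lambda)\gamma(p'),\qquad \lambda\in[0,1].\]
Since $p,p'\in J$ were arbitrary, $\gamma$ is concave on $J$.

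The only potential subtlety is verifying that the two-dimensional concavity on each $J_{k-1}\cup J_k$ really does translate into the one-dimensional slope inequality at a generic crossing, even when the segment is not perpendicular to the shared edge. This however is automatic: restricting a concave function on a convex set to any chord produces a concave function in one variable, and the left/right slopes at any interior point of the chord are then ordered by concavity. So no additional work is needed beyond the bookkeeping above, and this is the easiest (rather than the hardest) step of the proof; the real work was already done in the two previous claims.
\end{proofclaim}
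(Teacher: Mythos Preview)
Your proposal is correct and follows essentially the same route as the paper: restrict $\gamma$ to an arbitrary segment in $J$, decompose the segment according to which fan triangle $J_k$ it lies in, and use the affineness of $\gamma$ on each $J_k$ together with concavity on adjacent pairs $J_{k-1}\cup J_k$ to obtain a one-dimensional piecewise affine function with nonincreasing slopes. The paper is terser --- it takes the endpoints in the relative interiors of two $J_k$'s, observes that the segment meets each intermediate $J_k$ in a subsegment with nonempty interior, and then invokes continuity for boundary points --- but the argument is the same.
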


\begin{proofclaim}
Let $\tilde p_1$ and $\tilde p_2$ be given  in the relative interior of $J_{k_1}$ and $J_{k_2}$ respectively, with $k_1\leq k_2$.
Since the intersection of the line segment $[\tilde p_1,\tilde p_2]$ with each of the sets $J_{k_1}$, $J_{k_1+1},\dots, J_{k_2}$
is a line segment with a nonempty interior,
the concavity of the function $\gamma$ on each $J_k\cup J_{k+1}$ implies its concavity on $[\tilde p_1,\tilde p_2]$.

The concavity of the function $\gamma$ on $J$ follows by continuity.
\end{proofclaim}

\begin{claim}
The function $\gamma$ is concave on $\Delta(\Omega)$.
\end{claim}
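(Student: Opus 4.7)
The plan is to extend concavity of $\gamma$ from $J$ (shown in Claim \ref{claim:3}) to all of $\Delta(\Omega) = I \cup J$. On the investment region $I$, the greedy strategy reveals no information, so $\gamma$ satisfies the recursion $\gamma(p) = (1-\delta) + \delta\gamma(\phi(p))$ for every $p \in I$. Since $\phi$ is a homothety centered at $m$ with ratio $\lambda \in [0,1)$ and $m \in I \cup J_0$, there are two subcases. Either $m$ lies in the interior of $I$, in which case $\phi(I) \subseteq I$ and hence $\gamma \equiv 1$ on $I$; or $m \in J_0$, in which case every $\phi$-orbit starting in $I$ enters $J$ after finitely many iterations, so I partition $I$ into the convex bands
\[
I_k := \{p \in I : \phi^{(j)}(p) \in I \text{ for } 0 \leq j < k,\ \phi^{(k)}(p) \in J\},
\]
each of which is a strip parallel to $\calF$. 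On each $I_k$ the iterated recursion gives $\gamma(p) = (1-\delta^k) + \delta^k\gamma(\phi^{(k)}(p))$, so $\gamma|_{I_k}$ is concave as the composition of $\gamma|_J$ with the affine map $\phi^{(k)}$.

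For the first subcase, concavity on $\Delta(\Omega)$ follows easily: along any chord joining $p \in I$ to $q \in J$ and crossing $\calF$ at $p_*$, the recursion at $p_*$ combined with $\phi(p_*) \in I$ yields $\gamma(p_*) = 1$; the chord-restricted function equals $1$ on the $I$-portion and is $\leq 1$ and concave on the $J$-portion with maximum $1$ attained at $p_*$, hence concave on the whole chord. For the second subcase, I would glue the concave pieces on $J$ and on the bands $I_1, I_2, \ldots$ across their shared boundaries (all parallel to $\calF$) using the preliminary observation from the proof of Claim 2: concavity across each boundary reduces to concavity along a single transverse line. Picking such a line $\calL$ through $m$, in a coordinate $t$ centered at $m$ the map $\phi$ acts on $\calL$ as $t \mapsto \lambda t$, so on $\calL \cap I_k$ the derivative of $\gamma$ equals $\delta^k \lambda^k$ times the derivative of $\gamma|_J$ at the corresponding point $\phi^{(k)}(p) \in J$.

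The main obstacle is the slope comparison at each boundary along $\calL$. Between consecutive bands $I_k$ and $I_{k+1}$, the scaling factor changes by the contraction $\delta\lambda < 1$, which together with the concavity of $\gamma|_J$ (Claim \ref{claim:3}) forces the one-sided derivatives of $\gamma$ to be nonincreasing outward along $\calL$. The delicate step is at $\calF$ itself: the $J$-side one-sided derivative along $\calL$ is the constant slope of the affine map $\gamma|_{J_0}$ (Claim 1), whose sign is determined by $\gamma(\calF) - \gamma(C)$ via the formula $\gamma(p) = x\gamma(C) + (1-x)\gamma(\calF)$; a direct computation using the recursions for $\gamma(\calF)$ and $\gamma(C)$ (noting that $\phi(\calF), \phi(C) \subseteq J_0$ where $\gamma$ is affine) yields the explicit value $\gamma(\calF) - \gamma(C) = (1-\delta)/(1-\delta\lambda) > 0$, so the outward slope on $J_0 \cap \calL$ is strictly positive, and the $I_1$-side slope equals $\delta\lambda$ times this, hence is strictly smaller. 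Concavity along $\calL$ follows, and the gluing principle delivers concavity on all of $\Delta(\Omega)$.
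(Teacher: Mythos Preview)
Your band-decomposition approach can be made to work, but as written it has a gap, and it overlooks a much shorter route that the paper takes.

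\textbf{The gap.} You invoke the ``preliminary observation'' from Claim~2 to glue the pieces $\gamma|_J, \gamma|_{I_1}, \gamma|_{I_2},\ldots$ across their common boundaries, but that observation requires the two pieces being glued to be \emph{affine}. You only argue that $\gamma|_{I_k}$ is concave (as the composition of $\gamma|_J$ with $\phi^{(k)}$ plus a constant), which is not enough for the lemma. The fix is short: observe that $I\cup J_0$ is the triangle $(A,B^+,C)$, hence convex, and contains both $I$ and $m$; therefore $\phi(I)\subseteq I\cup J_0$, so $\phi^{(k)}(I_k)\subseteq J\cap(I\cup J_0)=J_0$. Since $\gamma|_{J_0}$ is affine (Step~1), each $\gamma|_{I_k}$ is in fact affine, and the gluing lemma then applies legitimately. (Incidentally, the dichotomy ``$m$ in the interior of $I$'' versus ``$m\in J_0$'' should simply be ``$m\in I$'' versus ``$m\in J_0$''; when $m\in\calF$ one still has $\phi(I)\subseteq I$.)

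\textbf{The simpler route.} The paper bypasses the entire band decomposition by recalling that Step~1 has \emph{already} established concavity of $\gamma$ on all of $I\cup J_0$ (via the two-state reduction along the line $(C,m)$, together with constancy of $\gamma$ on lines parallel to $\calF$). Combined with Claim~3 (concavity on $J$), all that remains is to check concavity along a segment $[\tilde p_1,\tilde p_2]$ with $\tilde p_1\in I$ and $\tilde p_2\in J_k$, $k\geq 1$, lying in the relative interior of $\Delta(\Omega)$. Such a segment must cross $\calF$ and then the edge $[B^+,C]$ at two \emph{distinct} points (they coincide only at the boundary vertex $B^+$), so it contains a nondegenerate subsegment $[p_*,p_{**}]\subseteq J_0$. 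Concavity on the two overlapping intervals $[\tilde p_1,p_{**}]\subseteq I\cup J_0$ and $[p_*,\tilde p_2]\subseteq J$ gives concavity on the whole segment. No slope computations or interface-by-interface checks are needed.
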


\begin{proofclaim}
As above, it suffices to prove that $\gamma$ is concave on the relative interior $\stackrel{\circ}{\Delta}(\Omega)$ of $\Delta(\Omega)$.
Pick $\tilde p_1,\tilde p_2\in \stackrel{\circ}{\Delta}(\Omega)$, with $\tilde p_1\in I$ and $\tilde p_2\in J_k$ for some $k\geq 1$.%
\footnote{For other cases, the concavity of $\gamma$ on $[\tilde p_1,\tilde p_2]$ follows from either Step 1 or Claim \ref{claim:3}.}
Since $[\tilde p_1,\tilde p_2]\subset \stackrel{\circ}{\Delta}(\Omega)$,
there is a line segment $[p_*,p_{**}]\subseteq [\tilde p_1,\tilde p_2]$ with $p_*,p_{**}\in J_0$ and $p_*\neq p_{**}$.
By Step 1 the function $\gamma$ is concave on $[\tilde p_1,p_{**}]$ and by Claim \ref{claim:3} it is concave on $[p_*,\tilde p_2]$.
Therefore it is concave on $[\tilde p_1,\tilde p_2]$.
\end{proofclaim}

\bigskip\noindent
\underline{Step 4}. $d\geq 0$ on $\Delta(\Omega)$.

We start with the intuitive observation that the payoff under $\sigma_*$ is higher when starting from $\calF$ than from $J$.
\bigskip

\begin{claim}
$\gamma(p) \leq \gamma(\calF)$ for all $p\in J$.
\end{claim}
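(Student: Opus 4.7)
The plan is to reduce the claim to a maximum principle on $\bar J = J \cup \calF$. Let $p^* \in \bar J$ attain the maximum of $\gamma$ on the compact set $\bar J$; this maximum exists because $\gamma$ is continuous on $\Delta(\Omega)$ (being concave by Step 3). It suffices to show $\gamma(p^*) \leq \gamma(\calF)$, since then $\gamma$ is bounded above by $\gamma(\calF)$ on $\bar J$, hence \emph{a fortiori} on $J$. The case $p^* \in \calF$ is immediate.

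I would first handle the case $p^* \in J \setminus \Delta(\Omega^-)$. Here the greedy splitting $p^* = a_I q_I + a_J q_J$ satisfies $a_I > 0$ with $q_I \in \calF$ and $q_J \in \Delta(\Omega^-) \subseteq \bar J$. Unrolling one step of the recursion for $\gamma$ under $\sigma_*$---using that $q_I \in \calF \subseteq I$ gives $\gamma(q_I) = \gamma(\calF)$ and that $\sigma_*$ does not split at $q_J \in \Delta(\Omega^-)$---yields $\gamma(p^*) = a_I \gamma(\calF) + a_J \gamma(q_J)$. Maximality gives $\gamma(q_J) \leq \gamma(p^*)$, and substituting and using $a_I > 0$ leads to $\gamma(p^*) \leq \gamma(\calF)$.

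The remaining and main case is $p^* \in \Delta(\Omega^-) = [B,C]$, where $a_I = 0$ and one only has the identity $\gamma(p^*) = \delta \gamma(\phi(p^*))$. I would then split on the location of $m$. If $m \in J_0$ (so $m \in \bar J$), convexity of $\bar J$ implies $\phi(p^*) = \lambda p^* + (1-\lambda) m \in \bar J$; maximality gives $\gamma(\phi(p^*)) \leq \gamma(p^*)$, and combining with $\gamma(p^*) = \delta \gamma(\phi(p^*))$ and $\gamma \geq 0$ forces $\gamma(p^*) = 0 \leq \gamma(\calF)$. If instead $m \in I$, then $\calF \subseteq I$ together with convexity of $I$ yields $\phi(\calF) \subseteq I$, and inductively $\phi^n(\calF) \subseteq I$ for every $n \geq 0$. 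Under $\sigma_*$ from any point of $\calF$, the belief therefore remains in $I$ forever and collects stage payoff $1$ each stage, whence $\gamma(\calF) = 1 \geq \gamma(p^*)$.

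The main obstacle is precisely the no-splitting case $p^* \in [B,C]$: the clean maximality argument closes only when $\bar J$ is $\phi$-invariant, i.e.\ when $m \in \bar J$; the complementary case $m \in I$ must be settled by the independent observation that the greedy trajectory from $\calF$ never leaves the investment region, giving $\gamma(\calF) = 1$ and making the target inequality trivial.
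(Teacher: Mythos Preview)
Your argument is correct and takes a genuinely different route from the paper's. The paper proceeds by induction along the simplicial decomposition $J_0,J_1,\ldots,J_K$ built in Step 2: it first shows $\gamma(C)\leq\gamma(\calF)$ via $\gamma(C)=\delta\gamma(\phi(C))$ and the affine structure on $J_0$, then propagates the bound to each $J_k$ using that $\gamma$ on $J_k$ is a convex combination of $\gamma(\calF)$ and values on $[O_{k+1},O_k]$, which in turn are controlled by $J_{k-1}$. Your proof instead runs a maximum principle on $\bar J$: pick a maximizer $p^*$, use the one-step greedy identity $\gamma(p^*)=a_I\gamma(\calF)+a_J\gamma(q_J)$ when $a_I>0$, and when $p^*\in[B,C]$ exploit $\phi$-invariance of $\bar J$ (if $m\in J_0$) or $\gamma(\calF)=1$ (if $m\in I$). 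This is shorter and does not touch the triangles $J_k$ at all; the paper's inductive approach, by contrast, stays closer to the explicit belief dynamics and reuses machinery already in place from Step~2.

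One small remark: you justify continuity of $\gamma$ by citing concavity from Step~3, but concavity on a polytope does not by itself force continuity at boundary points. The conclusion is nonetheless valid here because Step~3 in fact establishes that $\gamma$ is piecewise affine on the decomposition $I\cup J_0,J_1,\ldots,J_K$, which gives continuity directly; you may want to cite that instead.
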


\begin{proofclaim}
This is trivial if $m\in I$, since $\gamma(B^+)$ is then equal to 1.
Assume then that $m \in J_0$.

We prove inductively that $\gamma(p) \leq \gamma(\calF)$ for all $p\in J_k$.
Note first that $\gamma(C)=\delta \gamma(\phi(C))$, so that $\gamma(C)\leq \gamma(\phi(C))$. Since
$m,C\in J_0$, we have $\phi(C)\in J_0$, hence  $\gamma(\phi(C))$ is a convex combination of $\gamma(C)$ and $\gamma(\calF)$.
This implies that $\gamma(C)\leq \gamma(\calF)$. Note next that, for $p\in J_0$, the quantity $\gamma(p)$ is a convex combination of $\gamma(C)$ and $\gamma(\calF)$,
hence $\gamma(p)\leq \gamma(\calF)$.

Assume that the conclusion holds on $J_{k-1}$ for some $k\geq 1$. For $p\in [O_{k+1},O_k]$, since $\phi(p)\in J_{k-1}$,
we have $\gamma(p)=\delta\gamma(\phi(p))\leq \gamma(\calF)$. Observe finally that for some $p\in J_k$, the quantity $\gamma(p)$ is a convex combination of $\gamma(\calF)$ and of $\gamma(q)$ for some $q\in [O_{k+1},O_k]$,
hence $\gamma(p)\leq \gamma(\calF)$ and the conclusion holds on $J_k$ as well.
\end{proofclaim}

\bigskip
We conclude with the tricky part of the proof.

\begin{claim}
For $k\geq 1$, we have $d\geq 0$ on some neighborhood of $O_{k+1}$ in $J_k$.
\end{claim}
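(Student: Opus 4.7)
The plan is to first verify that $d(O_{k+1})=0$, then expand $d$ locally using the greedy splitting, and finally bound the resulting concavity defect via the piecewise-affine structure of $\gamma$.

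First, since $O_{k+1}\in[B,C]$ has no mass on $\Omega^+=\{A\}$, every splitting at $O_{k+1}$ satisfies $a_I=0$; the greedy strategy thus provides no information there, and so $\gamma(O_{k+1})=\delta\gamma(\phi(O_{k+1}))=\delta\gamma(P_{k+1})$, giving $d(O_{k+1})=0$. Next, for $p\in J_k$ close to $O_{k+1}$, I would consider the greedy splitting $p=a_IB^++a_Jq_J$ with $q_J\in[O_k,O_{k+1}]$, $a_J=1-a_I$, and $a_I\to 0$, $q_J\to O_{k+1}$ as $p\to O_{k+1}$. Using the affineness of $\gamma$ on $J_k$ established earlier in Step~3, combined with the identities $\gamma(B^+)=(1-\delta)+\delta\gamma(\phi(B^+))$ (since $B^+\in\calF\subset I$, where greedy discloses nothing) and $\gamma(q_J)=\delta\gamma(\phi(q_J))$ (since $q_J\in[B,C]$ forces no disclosure), one gets
\[
d(p)=(1-\delta)\,a_I+\delta\bigl[a_I\gamma(\phi(B^+))+a_J\gamma(\phi(q_J))-\gamma(\phi(p))\bigr].
\]
The bracket is a concavity defect of $\gamma$ along the splitting of $\phi(p)$ into $\phi(B^+)$ and $\phi(q_J)$, and is non-positive by the global concavity of $\gamma$ proved in Step~3.

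The main obstacle, and the crux of the claim, is to control this defect for $p$ near $O_{k+1}$. The key geometric observation is that for $p$ in a sufficiently small neighborhood of $O_{k+1}$ in $J_k$, the image $\phi(p)$ lies in $\bar J_{k-1}$. Indeed, $\phi(p)\to P_{k+1}\in[B^+,O_k]=\bar J_{k-1}\cap\bar J_k$, and writing $\phi(p)-P_{k+1}=\lambda(p-O_{k+1})$ with $p-O_{k+1}$ in the tangent cone of $J_k$ at $O_{k+1}$, one checks that both tangent directions $B^+-O_{k+1}$ and $O_k-O_{k+1}$ point into the half-plane containing $J_{k-1}$, so that $\phi(p)$ enters $\bar J_{k-1}$ for $p$ close enough to $O_{k+1}$. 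On $\bar J_{k-1}$, $\gamma$ coincides with an affine map $\ell_{k-1}$, so by linearity the defect rewrites as $-a_I\bigl[\ell_{k-1}(\phi(B^+))-\gamma(\phi(B^+))\bigr]$, yielding
\[
d(p)=a_I\bigl[(1-\delta)-\delta\bigl(\ell_{k-1}(\phi(B^+))-\gamma(\phi(B^+))\bigr)\bigr].
\]

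It then remains to show that the extrapolation error $\ell_{k-1}(\phi(B^+))-\gamma(\phi(B^+))$ does not exceed $(1-\delta)/\delta$, which is the delicate part of the argument. This is established by exploiting the hypothesis $m\in I\cup J_0$ of Case~2, which places $\phi(B^+)$ in $\bar I\cup\bar J_0$, and by using the recursive relations between the values of $\gamma$ at the vertices $O_j$ and $B^+$ (namely $\gamma(O_j)=\delta\gamma(P_j)$ with $\gamma(P_j)=\beta_{j-1}\gamma(B^+)+(1-\beta_{j-1})\gamma(O_{j-1})$ from the affineness of $\gamma$ on $J_{j-1}$). The bound then propagates along the chain $J_0,J_1,\dots,J_{k-1}$ by induction on $k$, the base case $k=1$ being immediate since $\phi(B^+)\in\bar J_0=\bar J_{k-1}$ forces the extrapolation error to vanish and hence $d(p)=(1-\delta)a_I\ge 0$ throughout $J_1$.
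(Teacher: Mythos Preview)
Your setup is correct and closely parallels the paper's: you localize near $O_{k+1}$, use the greedy splitting $p=a_IB^++a_Jq_J$, and exploit that $\phi$ sends a small triangle at $O_{k+1}$ into $\bar J_{k-1}$ where $\gamma$ is affine. Your formula
\[
d(p)=a_I\Bigl[(1-\delta)-\delta\bigl(\ell_{k-1}(\phi(B^+))-\gamma(\phi(B^+))\bigr)\Bigr]
\]
is right, and the base case $k=1$ is fine (when $m\in J_0$ the extrapolation error vanishes; when $m\in I$ concavity even makes it nonpositive).

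The gap is the inductive step. You state that the bound $\ell_{k-1}(\phi(B^+))-\gamma(\phi(B^+))\le(1-\delta)/\delta$ ``propagates along the chain $J_0,\dots,J_{k-1}$'' via the relations $\gamma(O_j)=\delta\gamma(P_j)$ and $\gamma(P_j)=\beta_{j-1}\gamma(B^+)+(1-\beta_{j-1})\gamma(O_{j-1})$, but you do not carry this out, and it is not clear how these recursions alone control the extrapolation of $\ell_{k-1}$ at the fixed point $\phi(B^+)$, which lies far from $J_{k-1}$ (in $I\cup J_0$). The quantity $\ell_{k-1}(\phi(B^+))$ genuinely depends on $k$, and nothing in your outline prevents the extrapolation error from exceeding $(1-\delta)/\delta$ for large $k$; an actual inequality needs to be proved.

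The paper sidesteps this difficulty by a different decomposition. Instead of drifting the split endpoints (which forces the evaluation of $\ell_{k-1}$ at $\phi(B^+)\notin J_{k-1}$), it introduces the auxiliary point $q_\ep:=\ep B^++(1-\ep)P_{k+1}\in J_{k-1}$ and writes
\[
d(p_\ep)=\bigl[\gamma(p_\ep)-\delta\gamma(q_\ep)\bigr]+\delta\bigl[\gamma(q_\ep)-\gamma(\phi(p_\ep))\bigr].
\]
The first bracket equals $\ep(1-\delta)\gamma(B^+)$ directly. For the second, since $q_\ep-\phi(p_\ep)=\ep(B^+-\phi(B^+))$ and $q_\ep,\phi(p_\ep)\in J_{k-1}$, one only needs the \emph{linear part} of $\ell_{k-1}$ on the vector $B^+-\phi(B^+)$. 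The key algebraic trick is the rewriting
\[
B^+-\phi(B^+)=(O_k-P_k)+(1-\lambda)(B^+-O_k),
\]
which expresses this vector through the three points $O_k,P_k,B^+$, \emph{all of which lie in $\bar J_{k-1}$}. This yields the closed form
\[
d(p_\ep)=\ep(1-\lambda\delta)\bigl(\gamma(B^+)-\delta\gamma(P_k)\bigr),
\]
whose nonnegativity follows at once from the preceding claim $\gamma\le\gamma(\calF)=\gamma(B^+)$ on $J$, with no induction needed. If you want to salvage your route, you should aim to prove exactly this identity for the bracket in your formula; the induction you sketch is not the efficient way to close it.
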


\begin{proofclaim}
Given $\ep>0$, let $p_\ep:=\ep B^+ +(1-\ep)O_{k+1}\in J_k$. Fix $\ep>0$ small enough so that $\phi(p_\ep)\in J_{k-1}$.
Observe that both $\gamma$ and $\gamma\circ \phi$ are affine on the triangle $(p_\ep, O_{k+1},O_k)$, hence $d$ is affine on this triangle as well.
Since $d=0$ on $[O_{k+1},O_k]$ it thus suffices to prove that $d(p_\ep)\geq 0$.

We denote by $\gamma_k:\Delta(\Omega)\to \dR$ the affine map which coincides with $\gamma$ on $J_k$. Set
$q_\ep:=\ep B^+ +(1-\ep) P_{k+1}$ and observe that
\begin{equation}\label{eq17}
d(p_\ep)=\gamma(p_\ep)-\delta \gamma(\phi(p_\ep))=\gamma(p_\ep)-\delta \gamma(q_\ep) +\delta \left( \gamma(q_\ep)-\gamma(\phi(p_\ep))\right).
\end{equation}
Since $\gamma(p_\ep)=\ep \gamma(B^+)+(1-\ep)\delta\gamma(P_{k+1})$ and $\gamma(q_\ep)=\ep \gamma(B^+)+(1-\ep)\gamma(P_{k+1})$, one has
\begin{equation}
\label{equ:13.1}
\gamma(p_\ep)-\delta \gamma(q_\ep)=\ep \gamma (B^+)(1-\delta).
\end{equation}

On the other hand, since $q_\ep$ and $\phi(p_\ep)$ belong to $J_{k-1}$, one has
\begin{equation}
\label{equ:13.2}
\gamma(q_\ep)-\gamma(\phi(p_\ep))=\gamma_k(q_\ep)-\gamma_k(\phi(p_\ep))=\gamma_k(q_\ep-\phi(p_\ep))=\ep \gamma_k( B^+ -\phi(B^+)).
\end{equation}
Substituting (\ref{equ:13.1}) and (\ref{equ:13.2}) into (\ref{eq17}) one gets
\begin{equation}\label{eq17bis}
d(p_\ep)=\ep \left(  \gamma(B^+) (1-\delta) +\delta \gamma_k (B^+ -\phi(B^+))\right).
\end{equation}

Now rewrite $B^+-\phi(B^+)$ as
\begin{eqnarray*}
B^+-\phi(B^+) &=& B^+ -O_k +O_k-P_k +P_k -\phi(B^+) \\
&=& O_k -P_k +(1-\lambda) (B^+-O_k)
\end{eqnarray*}
(recall that $P_k=\phi(O_k)$).

Since all three points $O_k,P_k$ and $B^+$ belong to $J_{k-1}$, one has
\begin{eqnarray*}
\gamma_k (B^+-\phi(B^+))&=& \lambda \gamma_k(O_k) -\gamma_k(P_k)+(1-\lambda)\gamma_k(B^+)\\
&=& \lambda \gamma(O_k)-\gamma(P_k)+(1-\lambda)\gamma(B^+)\\
&=& (1-\lambda) \gamma(B^+) -(1-\lambda\delta) \gamma(P_k).
\end{eqnarray*}
Plugging into (\ref{eq17bis}), one finally gets
\[d(p_\ep)=\ep(1-\lambda \delta)\left( \gamma(B^+)-\delta \gamma(P_k)\right),\]
which is nonnegative by Claim 1.
\end{proofclaim}

\bigskip

We now conclude the proof of Step 4. Let $p\in J_k$ be given. Since
$\gamma$ is affine on $J_k$ and concave on $\Delta(\Omega)$, the function $d$ is convex on $J_k$. Since
$d(O_{k+1})=0$ and
$d\geq 0$ in a neighborhood of $O_{k+1}$ (in $J_k$), $d$ is nonnegative on the entire line segment $[O_{k+1},p]$.


\begin{thebibliography}{99}                                                                                               %

\bibitem {Ath}{Athey, S. and K. Bagwell (2008), Collusion with Persistent Cost
Shocks, \emph{Econometrica},  \textbf{76}(3), 493--540.}

\bibitem{AM}{Aumann, R.J. and M. Maschler (2005), Repeated Games with Incomplete Information, \emph{MIT Press}.}

\bibitem{Ba}{Battaglini, M. (2005), Long-term contracting with Markovian consumers, \emph{American Economic Review}, \textbf{95}, 637--658.}

\bibitem{El}{Ely, J. (2014), Beeps, \emph{mimeo}.}

\bibitem{ET}{Escobar, J.F. and J. Toikka (2013), Efficiency in Games with Markovian Private Information,
\emph{Econometrica}, \textbf{81}, 1887--1934.}

\bibitem{HRSV 2012}H\"{o}rner, J., D. Rosenberg, E. Solan and N. Vieille (2010), On a Markov Game with One-Sided Incomplete Information,'' \textit{Operations Research}, \textbf{58}, 1107--1115.


\bibitem{KG} Kamenica, E and M. Gentskow (2011), Bayesian Persuasion
\textit{American Economic Review},
\textbf{101}, 2590--2615.


\bibitem {Mail}Mailath, G. and L. Samuelson (2001), Who Wants a Good
Reputation?, \textit{Review of Economic Studies,} \textbf{68}, 415--441.


\bibitem {Phe}{Phelan, C. (2006), Public Trust and Government Betrayal,
\textit{Journal of Economic Theory}, \textbf{127}(1), 27--43.}


\bibitem {R}{Renault, J. (2006), The Value of Markov Chain Games with Lack of
Information on One Side, \emph{Mathematics of Operations Research},
\textbf{31}, 490--512.}


\bibitem{Sys} {Sorin, S. (2002). A first course on zero-sum repeated games, Springer.}


\bibitem {Wisem}{Wiseman,\ T. (2008), Reputation and Impermanent Types, \emph{Games and Economic Behavior}, \textbf{62}, 190--210.
}

\bibitem{ZZ2}{
Zhang, H., M. Nagarajan and G. Sosic (2010),
Dynamic Supplier Contracts Under Asymmetric Inventory Information,
\emph{Operations Research},
\textbf{58}(5), 1380--1397.}

\bibitem{ZZ}{Zhang, H. and S. Zenios  (2008), A Dynamic Principal-Agent Model with Hidden Information: Sequential
Optimality Through Truthful State Revelation, \emph{Operations Research}, \textbf{58}, 214--228.}

\end{thebibliography}
\end{document}